\documentclass[pdflatex,sn-mathphys-num]{sn-jnl}

\usepackage[utf8]{inputenc}
\usepackage{mathtools}
\usepackage{graphicx}%
\usepackage{multirow}%
\usepackage{amsmath,amssymb,amsfonts}%
\usepackage{amsthm}%
\usepackage{parskip}
\usepackage{imakeidx}
\usepackage{hyperref}
\usepackage{mathrsfs}%
\usepackage[title]{appendix}%
\usepackage{xcolor}%
\usepackage{textcomp}%
\usepackage{manyfoot}%
\usepackage{booktabs}%
\usepackage{algorithm}%
\usepackage{algorithmicx}%
\usepackage{algpseudocode}%
\usepackage{listings}%

\usepackage{xparse, etoolbox}
\DeclarePairedDelimiterX\innerp[2]{\langle}{\rangle}{#1,#2}

\DeclareMathOperator{\supp}{supp}

\theoremstyle{thmstyleone}%
\newtheorem{theorem}{Theorem}[section]
\newtheorem{proposition}[theorem]{Proposition}%
\newtheorem{corollary}[theorem]{Corollary}
\newtheorem{lemma}[theorem]{Lemma}

\theoremstyle{thmstyletwo}%
\newtheorem{remark}[theorem]{Remark}%

\theoremstyle{thmstylethree}%
\newtheorem{definition}{Definition}[section]%

\begin{document}

\title[Article Title]{Nonexistence of finite-time blow-up for the equivariant harmonic map heat flow from \texorpdfstring{$B^2$}{B2} to \texorpdfstring{$S^2$}{S2}}

\author*{\fnm{Dylan} \sur{Samuelian}}\email{dylan.samuelian@epfl.ch}

\affil*{\orgdiv{SB MATH PDE}, \orgname{EPFL}, \orgaddress{\street{Station 8}, \city{Lausanne}, \postcode{1015}, \country{Switzerland}}}


\abstract{We consider $D$-equivariant solutions to the harmonic map heat flow from $B^2$ to $S^2$ under general time-dependent smooth boundary data and prove that there is no finite-time blow-up when $D \geq 2$. In contrast with the corresponding problem on the whole plane, the bounded domain and boundary condition are not preserved by the scaling of the equation, while the time-dependent boundary condition introduces an energy flux through the boundary, so that the total energy is no longer monotone. To overcome these difficulties, a careful localized blow-up analysis combining profile decomposition, modulation estimates and the collision-interval method is required.}

\keywords{harmonic map heat flow, bubble tree, blow-up, parabolic, nonlinear, soliton resolution, equivariant}

\maketitle

\tableofcontents

\section{Introduction}
We are interested in the behaviour of finite-time and finite-energy solutions to the Harmonic Map Heat Flow from the 2D-ball to the 2-sphere
\begin{align}
    \Phi_t &= \Delta \Phi + |\nabla \Phi|^2 \Phi, \quad (x,t) \in B^2 \times  (0,+\infty)  \\
    \Phi(x,t) &= \Phi_0(x,t), \quad (x,t) \in \partial B^2 \times [0,+\infty) \cup B^2 \times \{0\}, \notag
\end{align}
where $\Phi : B^2 \rightarrow S^2 \subset \mathbb R^3$, $\nabla \Phi = (J\Phi)^T$ is the transposed Jacobian matrix, the energy is defined as
\begin{equation}
    E(\Phi(\cdot,t)) = E(\Phi(t)) = \frac{1}{2}\int_{B^2} |\nabla \Phi(\cdot,t)|^2 dx 
\end{equation}
and the boundary data is the restriction of a function $\Phi_0 \in C^{\infty}(\overline{B^2} \times [0,+\infty); S^2)$ which is smooth and $D$-equivariant, meaning it has the form:
\begin{equation}
    \Phi_0(re^{i\theta},t) = \left( e^{iD\theta} \sin v_0(r,t), \cos v_0(r,t) \right) \in \mathbb C \times \mathbb R \simeq \mathbb R^{3}, 
\end{equation}
with $v_0(r,t) = r^D \tilde{v}_0(r,t)$, $\tilde{v}_0(r,t) \in C^{\infty}([0,1] \times [0,+\infty),\mathbb R)$, being the \textit{inclination coordinate} of $\Phi_0$. It follows that $\Phi(x,t)$ is also $D$-equivariant with an inclination coordinate $v(r,t)$ solving the following one-dimensional nonlinear heat equation \begin{align}
v_t &= v_{rr} + \frac{v_r}{r} - D^2 \frac{\sin(2v)}{2r^2}, \quad (r,t) \in (0,1) \times (0,+\infty), \\
v(r,t) &= v_0(r,t), \quad (r,t) \in \{0,1\} \times [0,+\infty) \cup (0,1) \times \{0\}. \notag
\end{align}

Assuming blow-up, Struwe (\cite{struwe2008variational}) described the formation of both finite-time and global-time singularities for $\Phi$ in terms of concentration of energy and he showed that one can extract a harmonic map from $S^2$ to $S^2$ near such singularities by choosing an appropriate sequence of times (see (\ref{eq: well-chosen sequence of times})) along which the flow becomes stationary. The description of the blow-up event was further improved by Qing (\cite{Qing1995OnSO}), who showed that near a singularity, the solution decomposes asymptotically as a sum of rescaled harmonic maps and a radiation term, using the so-called Palais-Smale condition. 

Finite-time blow-up was shown to exist in the $1$-equivariant setting by Chang, Ding and Ye (\cite{Chang1992FinitetimeBO}) and infinite-time blow-up was exhibited by Chang, Ding for $D = 1$ (\cite{ChangDing2011}),  Gustafson, Nakanishi and Tsai for $D  = 2$ (\cite{stability}, in the whole plane setting) and the result from Chang, Ding was generalized for $D \geq 1$ by the author (\cite{samuelian2025blowuptrees}). However, the precise form of the decomposition (How many bubbles ? Is the decomposition unique along all possible time sequences ? Is the decomposition continuous in time ?) remained unknown for a long time. In the $1$-equivariant setting on $B^2$, Van der Hout (\cite{VANDERHOUT2003}) was able to prove that only single-bubbling can occur and the present author generalized his result for $D \geq 1$ (\cite{samuelian2025blowuptrees}). 

In the $\mathbb R^2$ case instead of $B^2$, it has been recently proved by Jendrej and Lawrie (\cite{jendrej}) that the bubbling happens continuously in time (in the $D$-equivariant setting) using ideas inspired by the soliton resolution for dispersive equations. In the general nonsymmetric case, one only has a partial result, due to Jendrej, Lawrie and Schlag (\cite{Jendrej_Lawrie_Schlag_2025}): along any sequence of times, one can find a subsequence along which bubbling happens. Even more recently, it has been shown by Kim and Merle (\cite{kim2025classificationglobaldynamicsenergycritical}) that there is no finite-time blow-up when $D \geq 3$, still in the $\mathbb R^2$ case with $v_0(x,t) = v_0(x)$. This was already hinted by a previous stability result from Gustafson, Nakanishi and Tsai (\cite{stability}) for $D \geq 3$. 

The aim of this paper is to prove nonexistence of finite-time blow-up for the $D$-equivariant, $D \geq 2$, harmonic map heat flow from $B^2$ to $S^2$ (Theorem \ref{thm:main thm}) under smooth time-dependent boundary data. The proof is inspired by the arguments of Kim and Merle \cite{kim2025classificationglobaldynamicsenergycritical}, as well as Jendrej and Lawrie \cite{jendrej}. We note that it is already known from \cite{VANDERHOUT2003} and the author's work \cite{samuelian2025blowuptrees} that there can be only one bubble in such a profile decomposition. Therefore, the blow-up analysis simplifies substantially. However, our problem is still different in two important aspects. First, the whole-plane results cannot be applied directly because the parabolic scaling is not a symmetry of the problem on $B^2$ (the scaling does not preserve the domain and the boundary condition). However, one can still use the scaling isometries of $\dot{H}^1$ after localization of $v(r,t)$ to recover a profile decomposition near the origin and use the local energy arguments from Struwe (\cite{struwe2008variational}) to deduce further properties on the decomposition. Second, the aforementioned works only treat the initial-value problem, where one has monotonicity of the energy. Even though the blow-up phenomenon is local, the energy is non-monotone in our setting and the boundary data cannot be ignored a priori in the analysis of blow-up. Indeed, the comparison principle shows that if the boundary data satisfies $|v_0| \leq \pi$, then a finite-time blow-up cannot occur (\cite{ChangDing2011}, \cite{samuelian2025blowuptrees}). Moreover, there are examples of finite-time blow-up for $D = 1$ (\cite{Chang1992FinitetimeBO}) for the boundary-value problem on $B^2$ and this existence result also relies on a boundary criterion (take $v_0(r,t) = v_0(r), v_0(0) = 0, |v_0(1)| > \pi$). Still, the single-bubble result from \cite{VANDERHOUT2003} and \cite{samuelian2025blowuptrees} combined with the profile decomposition analysis in Section \ref{sec: profile decompo along sequence of times} hints that the boundary data has limited influence on the blow-up dynamics. Hence, the nonexistence result for $D \geq 3$ was also expected for the boundary-value problem. This still left the case $D = 2$ unresolved. However, global existence was hinted: it is already known that solutions to the $m$-equivariant Yang-Mills heat flow in 4D, which corresponds to a $2$-equivariant harmonic map heat flow with a different geometry when $m = 1$ (\cite{grotowski2007geometric}), are global (\cite{hong2004global}). The borderline $D=2$ case is handled in Theorem \ref{thm:modulation estimates} by using a cancellation in $(\Lambda +2)\Lambda W$, which yields an improved decay $(\Lambda +2)\Lambda W \lesssim \langle y \rangle^{-2D-2}$ compared to the class $D \geq 3$, and by using the logarithmic growth of $||r \Lambda W||_{L^2(B_{\lambda^{-1}}(0))}$. 

 

\begin{theorem}[Nonexistence of finite-time blow-up]\label{thm:main thm}
    Consider a solution $v(r,t)$ of the $D$-equivariant harmonic map heat flow
     \begin{align}
v_t &= v_{rr} + \frac{v_r}{r} - D^2 \frac{\sin(2v)}{2r^2}, \quad (r,t) \in (0,1) \times (0,+\infty), \label{eq: radial HMF, 2-dim pb} \\
v(r,t) &= v_0(r,t), \quad (r,t) \in \{0,1\} \times [0,+\infty) \cup (0,1) \times \{0\} \notag
\end{align}
with smooth boundary and initial data $v_0(r,t) = r^D \tilde{v}_0(r,t)$, $\tilde{v}_0 \in C^{\infty}([0,1]\times [0,+\infty))$. If $D \geq 2$, then $v(r,t)$ is global.
\end{theorem}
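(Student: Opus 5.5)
We argue by contradiction and assume the solution blows up at a finite time $T<\infty$. The first step is to localize the blow-up at the origin. Equivariance and smoothness of the boundary data keep $v$ bounded away from $r=0$: by Struwe's local energy inequality, energy cannot concentrate at any $r_0\in(0,1]$, since an equivariant concentration there would carry infinite energy along a circle. The time-dependent boundary flux is controlled on $[0,T]$ because $\Phi_0$ is smooth, so $E(\Phi(t))$ stays bounded on $[0,T)$. Using the single-bubble result of \cite{VANDERHOUT2003,samuelian2025blowuptrees}, along any sequence $t_n\to T$ we get a decomposition
\[
v(r,t_n)=\pm W(r/\lambda_n)+m\pi+v^*(r)+o_{\dot H^1}(1),\qquad \lambda_n\to0 ,
\]
where $W=2\arctan(r^D)$ and $v^*$ is a regular body map with $v^*(0)=0$. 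Localizing with a cutoff $\chi$ near $0$ lets us apply the $\dot H^1$ scaling isometries and the profile decomposition of Section~\ref{sec: profile decompo along sequence of times}.

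The second step upgrades this sequential decomposition to one that holds continuously in time. We would follow the collision-interval method of Jendrej--Lawrie. With only one bubble there are no collisions, so it should reduce to showing that the distance $\mathbf d(t)$ to the one-bubble manifold tends to $0$ as $t\to T$, and not merely along a sequence. Suppose instead that $\mathbf d$ returns repeatedly to some $\epsilon>0$. The localized energy dissipation $\int_{t}^{T}\!\int\chi|v_t|^2\,r\,dr\,ds\to0$, with the boundary-flux term bounded because the cutoff is supported away from $r=1$, combined with Struwe's choice of good times, should yield a contradiction through compactness.

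Once $v(t)$ stays close to $W_{\lambda(t)}+v^*$ for all $t$ near $T$, we modulate. Write $v=W_{\lambda(t)}+g$ with the orthogonality condition $\langle \chi_{L\lambda}\Lambda W_\lambda,g\rangle=0$. Then apply Theorem~\ref{thm:modulation estimates}, which we take as given, to obtain a differential inequality for $\lambda'$. The idea is that $\lambda\lambda'$ is driven by the interaction of the bubble with the body map. Since $v^*(r)=O(r^D)$ at the origin, this interaction is weak: it is of order $\lambda^{D}$ up to small errors. The linearized dissipation, measured by $\int_t^T\|g\|^2_{\dot H^2\text{-type}}\,ds$, is finite.

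For $D\ge3$ the resulting ODE is roughly $|\lambda'|\lesssim \lambda^{D-1}+(\text{integrable error})/\lambda$. After integration this forbids $\lambda\to0$ in finite time, because the parabolic rate $\lambda\sim\sqrt{T-t}$ needs $\lambda\lambda'\sim -1$. The case $D=2$ is borderline, and we expect it to be the main obstacle. There we would use the cancellation $(\Lambda+2)\Lambda W=O(\langle y\rangle^{-2D-2})$ to gain decay in the error terms. We would then use the logarithmic size $\|r\Lambda W\|_{L^2(B_{\lambda^{-1}})}\sim|\log\lambda|^{1/2}$ to normalize the modulation equation, which gives something like $|\lambda\lambda'|\,|\log\lambda|\lesssim o(1)+\lambda^2|\log\lambda|$. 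Integrating from $t$ to $T$ then shows that $\lambda^2|\log\lambda|$ cannot go to $0$ in finite time unless it is identically degenerate.

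The conclusion is $\lambda(t)\not\to0$ as $t\to T$, which contradicts concentration. Hence $v$ is global.
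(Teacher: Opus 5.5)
\textbf{Gap 1: the final integration step does not give a contradiction.} Your outline (one-bubble decomposition at good times, continuity via collision intervals, orbital stability, modulation, integrating the scale) matches the paper's, but the last step fails as you set it up.

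An inequality $|\lambda'|\lesssim\lambda^{D-1}+e(t)/\lambda$ with $e\in L^1_t$ only gives $|(\lambda^2)'|\lesssim\lambda^D+e\in L^1_t$. So $\lambda(t)^2$ has a limit as $t\to T$, and that limit can be $0$.
\begin{itemize}
\item Excluding $\lambda\lambda'\sim-1$ rules out only self-similar concentration, which $\lambda_{m,k}\ll\sqrt{T-t_k}$ already excludes.
\item The type II scale $\lambda=(T-t)^{\nu}$, $\nu>1/2$, satisfies your inequality with $e=\nu(T-t)^{2\nu-1}\in L^1$.
\item For $D=2$ the same defect appears: $X=\lambda^2|\log\lambda|$ with $|X'|\lesssim o(1)+X$ can vanish in finite time (take $X=(T-t)^2$). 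Only a homogeneous bound $|X'|\lesssim X$ would forbid this.
\end{itemize}

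What you need, and what Theorem \ref{thm:modulation estimates} provides, is control of $(\log\lambda)'$ by an integrable function up to a total derivative: $|\frac{d}{dt}(\log\lambda+\omega)|\lesssim 1+\|g\chi_{r_0/2}\|_{\dot H^2}^2$. The mechanism is as follows.
\begin{itemize}
\item The linear term $\langle H_\lambda(g\chi_{r_0/4}),(\Lambda W)_\lambda\rangle$ vanishes because $H\Lambda W=0$.
\item The only remaining contribution of size $\|g\|_{\dot H^2}/\lambda$ is the time derivative $\partial_t\langle g\chi_{r_0/4},\lambda^{-2}(\Lambda W)_\lambda\rangle$, which is absorbed into $\omega$.
\item For $D\ge3$ one has $|\omega|\lesssim 1$.
\item For $D=2$, the cancellation in $(\Lambda+2)\Lambda W$ and $\|r\Lambda W\|_{L^2(B_{\lambda^{-1}})}\lesssim\sqrt{1+|\log\lambda|}$ give $|\omega|\lesssim 1+\sqrt{|\log\lambda|}$, which is sublinear in $|\log\lambda|$.
\end{itemize}
Integrating then yields $|\log\lambda(t)|\le C+C\sqrt{|\log\lambda(t)|}$ near $T$, contradicting $\lambda(t)\to0$. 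Your body-map heuristic $\lambda\lambda'\sim\lambda^D$ plays no role.

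This argument needs $\int\|g\chi_{r_0/2}\|_{\dot H^2}^2\,dt<\infty$, which you only assert. It comes from the uniform coercivity of $H_\lambda$ on $B_\delta$ under the orthogonality condition, which gives $\|g\chi_{r_0/4}\|_{\dot H^2}\lesssim\|u_t\chi_{r_0/4}\|_{L^2}+C$.

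\textbf{Gap 2: Step 2 is only a hope.} Good times control $v_t$ only along a sparse sequence. The fact that $\int_t^T\!\int\chi|v_t|^2\to0$ does not by itself exclude fast, cheap excursions away from the one-bubble set at intermediate times. Relatedly, your Step 1 claim that the decomposition holds along any $t_n\to T$ is exactly what Step 2 has to prove.

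The paper makes the compactness quantitative with three facts.
\begin{itemize}
\item By the weak modulation estimate and the $L^2_t\dot H^2$ bound on $g$, $\lambda$ is essentially constant on time intervals of length $\lambda^2$.
\item An excursion from distance $\varepsilon$ to distance $\tilde\eta$ lasts at least $C\lambda(c)^2$. The proof uses the space-time bound on $v_t$ and the fact that $\|v(t)-v(T)\|^2_{\dot H^1(B_\eta)}\to E(Q)$ as $t\to T$ along all times, not just good times; this upgrades weak to strong convergence.
\item On the bad subintervals, $\lambda(t)\|v_t(t)\|_{L^2(B_{\delta/2})}\ge c_0$. Otherwise the Palais--Smale argument and the same energy limit would put the solution back near the bubble set.
\end{itemize}
Integrating over subintervals of length $\lambda(c_n)^2/n$ then costs at least $c_0^2/(4n)$ per excursion. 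Summing over excursions contradicts the finiteness of $\int_0^T\!\int_0^{1/2}|v_t|^2\,r\,dr\,dt$.
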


\begin{remark}
    Such a solution necessarily has finite energy on any finite time-interval $[0,T)$, $T <+\infty$, i.e.,
    $$
    \sup_{t \in [0,T)} E(v(\cdot,t)) := \sup_{t \in [0,T)}  \pi \int_0^1 \left( v_r^2 + \frac{D^2}{r^2} \sin(v)^2 \right) r dr < +\infty.
    $$
   This follows by differentiating the energy with respect to $t$. The energy is not monotone, but boundary terms can be controlled since $v_0$ is smooth.  See \cite[Remark 4.1]{samuelian2025blowuptrees}.
\end{remark}

\begin{remark}
    A blow-up at infinity is still possible.
\end{remark}


The proof goes as follows. First, we need an abstract profile decomposition, similar to that of Bahouri and Gérard (\cite{bahourigerard1999}). Such an abstract profile decomposition theorem has been developed by Fieseler, Schindler and Tintarev (Theorem \ref{thm:abstract profile decomposition}) for Banach and Hilbert spaces on which a set of isometries act by \textit{dislocations}. This decomposition is the first step needed to prove the profile decomposition (or soliton resolution) for $v(r,t)\chi_{1/4}(r)$ along a well-chosen sequence of times using the Palais-Smale condition, as in the work of Struwe (\cite{struwe2008variational}) and Qing (\cite{Qing1995OnSO}). Indeed, such a decomposition was already known for $\Phi$ (\cite{Qing1995OnSO}), but it is not immediately clear that it should imply a profile decomposition for the inclination coordinate $v$. Jendrej and Lawrie (\cite{jendrej}) proved the continuous-in-time profile decomposition for $v(r,t)$ in the whole space setting (\cite{jendrej}), but it is not clear that it should also apply to the case of a bounded domain with a boundary condition. 

Along one such sequence of times, one can then take inspiration from the argument of Kim and Merle (\cite{kim2025classificationglobaldynamicsenergycritical}) to prove that the scaling parameters cannot converge too fast to zero. This is a combination of a coercivity estimate for the Hamiltonian, an orbital stability result, an $L^2$-estimate for the remainder of the orbital stability result and a final modulation estimate on the scale $\lambda(t)$ obtained in the orbital stability result. 

In the last section, we prove that, given the profile decomposition along a sequence of times and given the modulation estimates, the scale from orbital stability can be taken continuously up to the blow-up time. In other words, $v(r,t)-v(r,T)$ stays continuously close to the $1$-parameter family $Q_{\lambda}$. This is the last step needed to reach a contradiction to the existence of finite-time blow-up for $D \geq 2$. For this last step, we follow the collision-interval argument from Jendrej and Lawrie (\cite{jendrej}).

\section{Notation}
We consider $v(r,t)$ solving (\ref{eq: radial HMF, 2-dim pb}) and blowing-up at time $T < +\infty$. In particular, the energy
$$ 
\sup_{t \in [0,T)} E(v(\cdot,t)) := \sup_{t \in [0,T)} \pi \int_0^1 \left( v_r^2 + \frac{D^2}{r^2} \sin(v)^2 \right) r dr  < +\infty
$$
is finite. Note that $v$ corresponds to a radial function $v(y,t)$ with $y \in \overline{B_1^{(2)}(0)} \subset \mathbb R^2$. Letting $v(r,t) = r^Du(r,t)$, the equation for $u(r,t)$ reads as
\begin{equation}
    \partial_t u = \partial_{rr} u + \frac{2D+1}{r} \partial_r u + D^2 r^{-D-2} \left( \frac{2r^Du - \sin(2r^Du)}{2} \right) \label{eq: radial HMF, 2k+2-dim pb}
\end{equation}
and $u(r,t)$ corresponds to a radial function $u(y,t)$ with $y \in \overline{B_1^{(2D+2)}(0)} \subset \mathbb R^{2D+2}$, whose energy is 
$$
E(u(\cdot,t))=\pi\int_0^1\left[\left(\partial_r u+\frac{D}{r}u\right)^2+\frac{D^2}{r^{2D+2}}\sin\!\left(r^D u\right)^2\right]\,r^{2D+1}dr .
$$

If $v(r,t)$ is a solution of (\ref{eq: radial HMF, 2-dim pb}), then the rescaled function $v_{\lambda}(r,t) =  v(r\lambda^{-1},t\lambda^{-2})$, $\lambda > 0$, is also a solution of  (\ref{eq: radial HMF, 2-dim pb}). In terms of $u$, the appropriate family of solutions for (\ref{eq: radial HMF, 2k+2-dim pb}) is obtained via $v_{\lambda} = r^D u_{\lambda}$, i.e.,
\[
u_{\lambda} = r^{-D} v(r \lambda^{-1},t\lambda^{-2}) = \lambda^{-D} (r\lambda^{-1})^{-D} v(r \lambda^{-1},t\lambda^{-2}) = \lambda^{-D} u(r\lambda^{-1},t\lambda^{-2}).
\]
The infinitesimal generators with respect to these transformations are given by:
\begin{align*}
    \Lambda^{(2)} v(r) &= - \left[ \partial_{\lambda} v_{\lambda} \right]_{\lambda = 1} = r \partial_r v(r), \quad v(r) = v(y), y \in \mathbb R^2, \\
     \Lambda u(r) &= - \left[ \partial_{\lambda} u_{\lambda} \right]_{\lambda = 1} = D u(r) + r \partial_r u(r), \quad u(r) = u(y), y \in \mathbb R^{2D+2}.
\end{align*}
We also introduce a few notations : $Q(r) = 2\arctan(r^D)$, $W(r) = r^{-D}Q(r)$, 
\[
f(v)  = D^2 \left( \frac{2v-\sin(2v)}{2} \right), \quad H = - \Delta - r^{-2}f'(Q) = - \Delta - 8D^2 \frac{r^{2D-2}}{(1+r^{2D})^2}.
\]
Here, $H$ is the operator arising in the linearization of (\ref{eq: radial HMF, 2k+2-dim pb}) around the ground state $W$. The kernel of $H$ in $\dot{H}^1(B_1(0))$ and $\dot{H}^1(\mathbb R^{2D+2})$ is given by 
\[
\Gamma_1 = \Lambda W = \frac{2D}{1+r^{2D}}.
\]
Another linearly independent solution is found using the method of variation of constants:
\[
\Gamma_2 = \Lambda W \cdot \int_1^r \frac{1}{s^{2D+1} (\Lambda W)^2} ds, \quad |\Gamma_2| \sim r^{-2D}, \quad |r| \ll 1
\]
and this solution is not in $\dot{H}^1(B_1(0))$.

Let also $\chi_{r \leq a} = \chi(r/a)$, $a > 0$, denote a smooth cut-off function, $|\chi| \leq 1$, which is $1$ on $r \leq a$, $0$ on $r \geq 2a$ and 
\begin{equation}
    \sup_{1\leq k \leq n}\sup_{r \geq 0}a^k |\partial_r^{(k)} \chi_{r \leq a}| \leq B_{\chi}^{(n)}. \label{eq: upper bound cut-off}
\end{equation}

\section{Profile Decomposition along a sequence of times}\label{sec: profile decompo along sequence of times}
In what follows, we will work at the level of a finite-energy solution $v(r,t)$ for (\ref{eq: radial HMF, 2-dim pb}). Assume $T < +\infty$. First, we show that finite-time blow-up leads to an asymptotic profile decomposition.

\subsection{Profile extraction}
As in \cite[Chapter 3, Lemma 5.9]{struwe2008variational}, if 
$$
\Phi(re^{i\theta},t) =  \left( e^{iD\theta} \sin v(r,t), \cos v(r,t) \right) \in \mathbb C \times \mathbb R \simeq \mathbb R^{3},
$$
then for any $0 < t_1 \leq t \leq t_2 < T$ and $\chi \in C^{\infty}_c(B_1(0))$ radial, an integration by parts shows that
    \begin{align}
   \int_{B_1(0) }  |\Phi_t|^2\chi^2 +  \frac{1}{2}\frac{d}{dt} \left( |\nabla \Phi|^2 \chi^2 \right) dx &= -  \int_{B_1(0) } 2 \chi (\nabla \chi)^T \nabla \Phi \Phi_t dx   \label{eq:equality involving v_t} \\
    &\leq \frac{1}{2}\int_{B_1(0) } |\Phi_t|^2\chi^2dx + 2 \int_{B_1(0) }  |\nabla \Phi|^2|\nabla \chi|^2 dx. \notag 
    \end{align}
Further assuming that $\chi = 1$ on some interval $[a,b] \subset [0,1)$, this implies, at the level of $v$, that
    \begin{align*}
    \int_{t_1}^{t_2} \int_{a}^b  \frac{1}{2}|v_t|^2rdrdt &\leq 4 \int_{t_1}^{t_2}  \int_{0}^1  |v_r|^2|\chi_r|^2 rdr dt \notag \\
    &\leq 4(t_2-t_1) \cdot \sup_{t \in [0,T]} E(v;[0,1]) \cdot ||\chi_r||_{L^{\infty}([0,1])}^2.
    \end{align*}
In particular,
\begin{equation}
    \int_0^T\int_{a}^b|v_t|^2rdrdt < +\infty, \label{eq: finite L^2-norm of v_t}
\end{equation}

Thanks to (\ref{eq: finite L^2-norm of v_t}) with $[a,b] = [0,1/2]$, one can choose a sequence of times $t_n \to T$ along which
\begin{equation}
    \sqrt{T-t_n}||v_t(t_n)||_{L^2(B_{1/2}(0))} \to 0. \label{eq: well-chosen sequence of times}
\end{equation}
Up to taking a subsequence of times, as $v(r,t_n)\chi_{1/4}(r)$ is a bounded sequence of radial functions in $\dot{H}^1_{rad}(\mathbb R^2)$, one obtains an abstract profile decomposition 
$$
v(r,t_k)\chi_{1/4}(r) = v(r,T)\chi_{1/4}(r) + \sum_{m=1}^M \psi_{m} \left(\frac{r}{\lambda_{m,k}} \right) + \varepsilon_{M,k}(r), \quad k \geq M, M \in \mathbb N_{\geq 0},  r > 0,
$$
using the scaling dislocations of $\dot{H}^1_{rad}(\mathbb R^2)$ (see Corollary \ref{thm:abstract profile decomposition with weak limit first}). The decomposition satisfies:
\begin{enumerate}
    \item \textbf{Separation of scales:} For $l \neq j$,
    $$
\lim_{k \to +\infty} \frac{\lambda_{l,k}}{\lambda_{j,k}} + \frac{\lambda_{j,k}}{\lambda_{l,k}} = +\infty
    $$
    with the convention that $\lambda_{0,k} = 1$. 
    \item \textbf{Weak convergence to profile:} The weak limit in $\dot{H}^1(\mathbb R^2)$ of $v(\lambda_{m,k} r,t_k)\chi_{1/4}(\lambda_{m,k} r)$ is $\psi_{m}(r)$.
    \item \textbf{Smallness of remainder:}  For any $0 \leq m \leq M$, the weak limit in  $\dot{H}^1(\mathbb R^2)$ of $\varepsilon_{M,k}(\lambda_{m,k} r)$ is zero.
 \item \textbf{Pythagorean decomposition of energy:} For any $M \geq 0$,
\begin{align}
        \limsup_{k \to +\infty} ||v(t_k)\chi_{1/4}(\cdot)||_{\dot{H}^1(\mathbb R^2)}^2  =  || v(T)\chi_{1/4}(\cdot)||_{\dot{H}^1(\mathbb R^2)}^2 &+ \sum_{m=1}^M||\psi_m||_{\dot{H}^1(\mathbb R^2)}^2 \notag \\
        &+ \limsup_{k \to +\infty} ||\varepsilon_{M,k}||_{\dot{H}^1(\mathbb R^2)}^2. \notag \\
         \limsup_{k \to +\infty} ||(v(t_k)-v(T))\chi_{1/4}(\cdot)||_{\dot{H}^1(\mathbb R^2)}^2  &=  \sum_{m=1}^M||\psi_m||_{\dot{H}^1(\mathbb R^2)}^2 \notag \\
         &+ \limsup_{k \to +\infty} ||\varepsilon_{M,k}||_{\dot{H}^1(\mathbb R^2)}^2. \label{eq:pythagorean decompo energy profile}
\end{align}
    \item \textbf{Finiteness of decomposition:} Let
    \begin{equation}
    D[(\varepsilon_n)] := \sup\{||\psi||_{\dot{H}^1(\mathbb R^2)} : \exists \psi \in \dot{H}^1(\mathbb R^2), \exists (n_k)_{k \in \mathbb N} \subset \mathbb N, (\lambda_k)_{k \in \mathbb N} \subset \mathbb R_{>0}, \varepsilon_{n_k}(\lambda_k r) \to \psi \text{ weakly } \}. \label{eq: finiteness criterion decompo}
\end{equation}
    For $M \in \mathbb N_{\geq 0}$, the extracted profile $\psi_{M+1}$ is non-zero if and only if $D[(\varepsilon_{M,n})] > 0$. Moreover, if $D[(\varepsilon_{M,n})] = 0$, then all the subsequent profiles $\psi_{M+1} = \psi_{M+2} = ... = 0$ are zero and the decomposition is finite. 
\end{enumerate}

\begin{remark}
    One actually has $\lambda_{M,k} \to 0$ if $M \neq 0$ and $\psi_M$ is a non-zero profile. Indeed, recall that $\psi_M$ is the weak $\dot{H}^1_{rad}(\mathbb R^2)$ limit of the rescaled sequence
\[
v_k(r) :=   v (\lambda_{M,k} r,t_k) \chi_{1/2}(\lambda_{M,k} r).
\]
The support of this function is $\lambda_{M,k}^{-1} B_1(0)$. If $\lim \limits_{k \to +\infty} \lambda_{M,k} = +\infty$, then the support shrinks to a zero-measure set, while the profile is non-zero in $\dot{H}^1(\mathbb R^2)$. This is a contradiction because
$$
|\langle v_k, f \rangle|_{\dot{H}^1(\mathbb R^2)} \leq \sup_k ||v_k||_{\dot{H}^1(\mathbb R^2)} \cdot ||\nabla f||_{L^2(K \cap \supp(f))} \to 0 
$$
for any test function $f \in C^{\infty}_c(\mathbb R^2)$, which would imply a zero profile.
\end{remark}

\begin{remark}\label{rmk: pythagorean decomposition on any ball}
    The Pythagorean decomposition of energy holds on any open subset $U \subset \mathbb R^2$ containing the origin. Indeed, it suffices to show that the cross-terms
    $$
    \langle \psi_l(\lambda_{l,k}^{-1} \cdot) , \psi_j (\lambda_{j,k}^{-1} \cdot) \rangle_{\dot{H}^1(U)}, \ l \neq j, \quad   \langle \psi_l(\lambda_{l,k}^{-1} \cdot), \varepsilon_{M,k}(\cdot) \rangle_{\dot{H}^1(U)}
    $$
    converge to zero as $k \to +\infty$, using the convention that $\lambda_{0,k} = 1, \psi_0 = v(r,T)\chi_{1/4}(r)$.  Observe first that
    \begin{align*}
          |\langle \psi_l(\lambda_{l,k}^{-1} \cdot) , \psi_j (\lambda_{j,k}^{-1} \cdot) \rangle_{\dot{H}^1(U)}| &\leq |\langle f_l(\lambda_{l,k}^{-1} \cdot) , \psi_j (\lambda_{j,k}^{-1} \cdot) \rangle_{\dot{H}^1(U)}| +  |\langle (f_l-\psi_l)(\lambda_{l,k}^{-1} \cdot) , \psi_j (\lambda_{j,k}^{-1} \cdot) \rangle_{\dot{H}^1(U)}|  \\
  &\leq |\langle f_l(\lambda_{l,k}^{-1} \cdot) , \psi_j (\lambda_{j,k}^{-1} \cdot) \rangle_{\dot{H}^1(U)}| +  ||f_l-\psi_l||_{\dot{H}^1(\mathbb R^2)} \cdot ||\psi_j||_{\dot{H}^1(\mathbb R^2)} \\
  &\leq |\langle f_l(\lambda_{l,k}^{-1} \cdot) , f_j (\lambda_{j,k}^{-1} \cdot) \rangle_{\dot{H}^1(U)}| +  ||f_l-\psi_l||_{\dot{H}^1(\mathbb R^2)} \cdot ||\psi_j||_{\dot{H}^1(\mathbb R^2)}  \\
  &+ ||f_j-\psi_j||_{\dot{H}^1(\mathbb R^2)} \cdot ||f_l||_{\dot{H}^1(\mathbb R^2)} 
    \end{align*}
    by Cauchy-Schwarz and scaling invariance in $\mathbb R^2$. As $\dot{H}^1(\mathbb R^2)$ is the completion of $C^{\infty}_c(\mathbb R^2)$ in the homogeneous Sobolev norm, one can approximate $\psi_{l}$ (resp. $\psi_j$) by a smooth, compactly supported function $f_{l}$ (resp. $f_{j}$) for which
    $$
||f_{l} - \psi_{l}||_{\dot{H}^1(\mathbb R^2)} \leq \frac{\varepsilon}{6(1 + \max \limits_{m = 1,...,M}||\psi_{m}||_{\dot{H}^1(\mathbb R^2)})}, \quad ||f_l||_{\dot{H}^1(\mathbb R^2)} \leq 2 ||\psi_l||_{\dot{H}^1(\mathbb R^2)}.
    $$
Then, it suffices to show that 
$$
\limsup_{k \to +\infty}  |\langle f_l(\lambda_{l,k}^{-1} \cdot) , f_j (\lambda_{j,k}^{-1} \cdot) \rangle_{\dot{H}^1(U)}|  \leq \varepsilon/3
$$
and let $\varepsilon \to 0$. For this, write
    $$
     \langle f_l(\lambda_{l,k}^{-1} \cdot) , f_j (\lambda_{j,k}^{-1} \cdot) \rangle_{\dot{H}^1(U)} = \int_{\lambda_{j,k}^{-1} U} \frac{\lambda_{j,k}}{\lambda_{l,k}} \nabla f_l \left( \frac{\lambda_{j,k}}{\lambda_{l,k}}  x \right) \nabla f_j(x) dx, \quad  \lim_{k \to +\infty}\frac{\lambda_{l,k}}{\lambda_{j,k}}  = +\infty,
    $$
   and use dominated convergence. As for the remainder, we also approximate the profile by a smooth, compactly supported function. Then
    $$
    \langle \varepsilon_{M,k},f_j(\lambda_{j,k}^{-1} \cdot)  \rangle_{\dot{H}^1(U)} = \int_{\lambda_{j,k}^{-1} U} \lambda_{j,k} \nabla \varepsilon_{M,k} \left( \lambda_{j,k}  x \right) \nabla f_j(x) dx.
    $$
    If $j \neq 0$, then $\lambda_{j,k}^{-1}(U) \cap \supp(f) = \supp(f)$ for all $k$ large enough as $U$ contains the origin. If $j = 0$, the support $U$ is fixed. Let $D \in \{U, \supp(f)\}$ denote this fixed domain of integration. Then,
\begin{align*}
        \langle \varepsilon_{M,k},f_j(\lambda_{j,k}^{-1} \cdot)  \rangle_{\dot{H}^1(U)} &= \int_{D} \lambda_{j,k} \nabla \varepsilon_{M,k} \left( \lambda_{j,k}  x \right) \nabla f_j(x) dx \\
        &= \int_{\mathbb R^2} \lambda_{j,k} \nabla \varepsilon_{M,k} \left( \lambda_{j,k}  x \right) 1_{D}(x) \nabla f_j(x) dx.
\end{align*}
     As $\lambda_{j,k} \nabla \varepsilon_{M,k} \left( \lambda_{j,k}  x \right) \to 0$ weakly in $L^2(\mathbb R^2;\mathbb R^2)$ and $1_{D}(x) \nabla f_j(x) \in L^2(\mathbb R^2;\mathbb R^2)$, we are finished.
\end{remark}

\subsection{Identifying the profiles and the rates of concentration}
In this section, we show that the profiles $\psi_m$ are non-constant harmonic maps and the concentration scales satisfy $\lambda_{m,k} \ll \sqrt{T-t_k}$. Let
\begin{align*}
    E(v(\cdot,t),[0,1/4]) &= \pi \int_0^{1/4} \left( v_r^2 + \frac{D^2}{r^2} \sin(v)^2 \right) r dr, \\
    E'(v(\cdot,t),[0,1/4])[\eta] &= -2\pi \int_0^{1/4} v_t(t,r)\eta(r)rdr, \eta \in C^{\infty}_c(B_{1/4}(0)).
\end{align*}
where we recall that the energy satisfies a triangle inequality $$
E(v + w)^{1/2} \leq E(v)^{1/2} + E(w)^{1/2}
$$
which implies the reverse triangle inequality 
$$
|E(v)^{1/2}-E(w)^{1/2}| \leq E(v-w)^{1/2}
$$
as well.

Consider a non-zero profile $\psi_m$, $m > 0$, and the associated scale $\lambda_{m,k} \to 0$. We know that $v(\lambda_{m,k} r, t_k) \to \psi_m(r)$ "weakly" in $\dot{H}^1(\mathbb R^2)$.

\begin{lemma} \label{lemma: lambda_k lessim sqrt T-t_k}
    One has $\lambda_{m,k} \lesssim \sqrt{T-t_k}$.
\end{lemma}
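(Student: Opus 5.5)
We argue by contradiction. Suppose that along a subsequence $\lambda_{m,k}/\sqrt{T-t_k} \to +\infty$, or at least $\lambda_{m,k} \geq c_k\sqrt{T-t_k}$ with $c_k \to +\infty$. The idea is that a nonzero amount of energy would then have to be transported over the scale $\lambda_{m,k}$ in a time $T-t_k \ll \lambda_{m,k}^2$. The localized energy inequality (\ref{eq:equality involving v_t}) says this is impossible: over a time interval of length $\tau$, the local energy in a ball of radius $R$ changes by at most $C\tau/R^2$ times the total energy. So the profile $\psi_m$ would have to be the $\dot H^1$-weak limit of $v(T)\chi_{1/4}$ rescaled at scale $\lambda_{m,k}\to 0$. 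Such a limit is zero, because $v(T)\chi_{1/4}$ is a fixed $\dot H^1(\mathbb R^2)$ function.

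\textbf{Step 1 (localized energy estimate, quantitative).} Apply (\ref{eq:equality involving v_t}) with a cutoff $\chi = \chi_{r\le R}$ (resp. $\chi_{r\leq 2R}\chi_{r \geq R}$-type annular cutoffs) and drop the nonnegative $|\Phi_t|^2$ term. This gives, for $t_k \leq t < T$ and $R \leq 1/8$,
\[
\Big|\int |\nabla\Phi(t)|^2\chi_{R}^2 - \int|\nabla\Phi(t_k)|^2\chi_R^2\Big| \lesssim \frac{T-t_k}{R^2}\sup_{s}E(v(s)) + \Big(\int_{t_k}^{T}\!\!\int_{B_{2R}}|\Phi_t|^2\Big)^{1/2}\Big(\frac{T-t_k}{R^2}\Big)^{1/2}.
\]
The second term is bounded by the tail of (\ref{eq: finite L^2-norm of v_t}), so it also tends to $0$. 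More usefully, we test the difference directly: for a fixed $f \in C_c^\infty(\mathbb R^2)$ radial, we write
\[
\langle v(\lambda_{m,k}\cdot,t_k) - v(\lambda_{m,k}\cdot,t),f\rangle_{L^2}= -\int_{t_k}^{t}\int v_t(\lambda_{m,k} y,s) f(y)\,dy\,ds.
\]
By Cauchy--Schwarz this is bounded by $\lambda_{m,k}^{-1}(T-t_k)^{1/2}\,\|v_t\|_{L^2((t_k,T)\times B_{1/2})}\|f\|_{L^2}$. Under our assumption $\sqrt{T-t_k}/\lambda_{m,k}\to 0$, and the $L^2_{t,x}$ norm is finite by (\ref{eq: finite L^2-norm of v_t}). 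Hence the right-hand side tends to $0$ uniformly in $t\in[t_k,T)$.

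\textbf{Step 2 (passing $t\to T$).} Since $v(t)\to v(T)$ weakly in $\dot H^1(B_{1/4})$ as $t \to T$ (uniform energy bound plus smooth convergence away from the origin), we let $t \to T$ in Step 1. This shows that $v(\lambda_{m,k}\cdot,t_k)\chi_{1/4}(\lambda_{m,k}\cdot) - v(\lambda_{m,k}\cdot,T)\chi_{1/4}(\lambda_{m,k}\cdot)$ tends to zero in the sense of distributions. It is also bounded in $\dot H^1(\mathbb R^2)$, so it tends to zero weakly in $\dot H^1$.

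\textbf{Step 3 (contradiction).} The rescalings $v(T)\chi_{1/4}(\lambda_{m,k}\cdot)$ with $\lambda_{m,k}\to 0$ converge weakly to zero in $\dot H^1(\mathbb R^2)$, since they concentrate at infinity in scale. This follows by approximating with $C_c^\infty$ functions and using dominated convergence, exactly as in Remark \ref{rmk: pythagorean decomposition on any ball}. Combining with Step 2 gives $\psi_m = 0$, which contradicts the assumption that $\psi_m$ is a nonzero profile. Hence $\lambda_{m,k}\lesssim \sqrt{T-t_k}$.

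\textbf{Main obstacle.} The delicate point is Step 2: justifying the weak $\dot H^1$ convergence $v(t)\rightharpoonup v(T)$ locally, in a way compatible with the rescaling. I would sidestep this by working directly with distributional pairings against fixed test functions $f$. The $L^2$ pairing identity in Step 1 is then uniform in $t$, and it can be combined with the uniform $\dot H^1$ bound to upgrade to weak $\dot H^1$ convergence.
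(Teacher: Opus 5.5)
Your proof is correct, but it takes a different route from the paper's.

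\textbf{The paper's route.} The paper works on a fixed annulus $[a,b]$ where $\psi_m$ is non-constant. It rescales to $V_k(s,\rho)=v(\lambda_{m,k}\rho,T+\lambda_{m,k}^2 s)$ and invokes interior parabolic regularity (the nonlinearity is bounded away from $\rho=0$) to get H\"older continuity on $[-1,0]\times[a,b]$, uniformly in $k$. Since $s_k=-(T-t_k)/\lambda_{m,k}^2\to 0$, it deduces that $v(t_k,\lambda_{m,k}\cdot)-v(T,\lambda_{m,k}\cdot)\to 0$ uniformly on $[a,b]$. It then uses that $v(T,\lambda_{m,k}\cdot)$ approaches $\pi\mathbb Z$ there, which contradicts the non-constancy of $\psi_m$.

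\textbf{Your route.} You use only the dissipation bound (\ref{eq: finite L^2-norm of v_t}), in a duality argument. Cauchy--Schwarz bounds the weak discrepancy by $\lambda_{m,k}^{-1}\sqrt{T-t_k}\,\|v_t\|_{L^2((t_k,T)\times B_{1/2})}\|f\|_{L^2}$. The uniform $\dot H^1$ bound upgrades this distributional convergence to weak $\dot H^1$ convergence, and the dislocation property sends the rescaled $v(T)\chi_{1/4}$ weakly to zero. This avoids parabolic estimates and any use of $\lim_{r\to 0}v(r,T)$. It is essentially the mechanism the paper itself uses later in Lemma \ref{lemma:collision interval length estimate}.

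\textbf{What each buys.} Your approach gives more than you claim. The factor $\|v_t\|_{L^2((t_k,T)\times B_{1/2})}$ is the tail of a convergent integral, so it tends to zero. Hence your argument works under the weaker assumption $\lambda_{m,k}\ge c\sqrt{T-t_k}$ for a fixed $c>0$, and proves Corollary \ref{cor: lambda_k ll sqrt T-t_k} directly. The paper needs the no-concentration proposition for that corollary, since H\"older continuity in time yields nothing once $s_k\not\to 0$. In exchange, the paper's route gives uniform closeness of $v(t_k)$ and $v(T)$ at scale $\lambda_{m,k}$ on compact annuli. That is stronger than weak convergence, but not needed here.

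\textbf{Minor point.} The first display in your Step~1 (the localized energy inequality) is never used. As written it also omits the $\int\!\!\int|\Phi_t|^2$ term, and your heuristic ``changes by at most $C\tau/R^2$ times the energy'' is only a one-sided bound. You can delete both.
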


\begin{proof}
Assume not. Fix $0 < a < b < +\infty$ on which $\psi_m$ is non-constant. Let $V_k(s,\rho) = v(\lambda_{m,k} \rho,T + \lambda_{m,k}^2 s)$ solve (\ref{eq: radial HMF, 2-dim pb}) on $[-1,0] \times [a/2,2b]$ for $k$ large enough. As the nonlinearity of (\ref{eq: radial HMF, 2-dim pb}) is bounded on that region, parabolic regularity (e.g. see the Parabolic Sobolev Embedding in \cite{samuelian2025blowuptrees}) shows that $V_k$ is Hölder-continuous on $[-1,0] \times [a,b]$, uniformly in $k$. Let 
$$
s_k = -(T-t_k)/\lambda_{m,k}^2 \to 0.
$$
Then,
$$
v(t_k,\lambda_{m,k} \rho) - v(T,\lambda_{m,k}\rho) = V_k(s_k,\rho) - V_k(0,\rho) \to 0, \quad v(T,\lambda_{m,k}\rho) \to \pi \mathbb Z
$$
uniformly in $\rho \in [a,b]$,  which contradicts the fact that the weak $\dot{H}^1(B_b(0) \setminus B_a(0))$-limit $\psi_m$ of $v(t_k,\lambda_{m,k} \rho)$ is non-constant.
\end{proof}

\begin{proposition}[Finiteness of the profile decomposition]
    There is a finite number of non-constant profiles. Moreover, each profile is of the form:
    \begin{equation} \label{all possible form for limiting H}
    \tilde{m}\pi \pm 2\arctan \left( (\alpha r)^D \right), \alpha > 0.
\end{equation}
\end{proposition}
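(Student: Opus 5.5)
We first show that each non-zero profile $\psi_m$, $m\ge 1$, is a finite-energy harmonic map of the equivariant reduction. Fix a radial test function $\eta\in C^\infty_c(\mathbb R^2\setminus\{0\})$. Since $\lambda_{m,k}\to 0$, for $k$ large the rescaled function $\eta(\cdot/\lambda_{m,k})$ is supported in $B_{1/4}(0)$, where $\chi_{1/4}\equiv 1$. Set $w_k(\rho)=v(\lambda_{m,k}\rho,t_k)$. The equation (\ref{eq: radial HMF, 2-dim pb}) rescales to
\[
\lambda_{m,k}^2 v_t(\lambda_{m,k}\rho,t_k)=\partial_{\rho\rho}w_k+\tfrac1\rho\partial_\rho w_k-D^2\tfrac{\sin 2w_k}{2\rho^2}.
\]
Testing against $\eta\rho$ and using Cauchy--Schwarz bounds the left-hand side by $\lambda_{m,k}\|v_t(t_k)\|_{L^2(B_{1/2})}\|\eta\|_{L^2}$. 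By Lemma \ref{lemma: lambda_k lessim sqrt T-t_k} and the choice (\ref{eq: well-chosen sequence of times}), this tends to $0$. Hence $E'(w_k)[\eta]\to 0$, which is the Palais--Smale condition along the rescaled sequence.

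To pass to the limit in the nonlinear term we need local strong convergence away from the origin. On any annulus $[a,b]$ the $\dot H^1$ bound gives uniform $C^{1/2}$ control in $\rho$ (one-dimensional radial Sobolev). Thus $w_k\to\psi_m$ locally uniformly on $(0,\infty)$ after extracting a subsequence, and $\sin(2w_k)\to\sin(2\psi_m)$ locally uniformly. The weak convergence handles the linear terms. Therefore $\psi_m$ is a weak, hence smooth, solution of $\psi''+\psi'/\rho=D^2\sin(2\psi)/(2\rho^2)$ on $(0,\infty)$ with finite energy. The finiteness of $E(\psi_m)$ follows from weak lower semicontinuity together with the uniform energy bound. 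Note that we work with $E$ rather than the $\dot H^1$ norm, so the term $\sin^2$ is controlled via Fatou's lemma.

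Next we classify such solutions. In the variable $s=\log\rho$ the equation becomes the autonomous pendulum equation $\psi_{ss}=\tfrac{D^2}{2}\sin 2\psi$. The energy equals $\pi\int(\psi_s^2+D^2\sin^2\psi)\,ds$, which is translation invariant in $s$, and the conserved quantity is $\psi_s^2-D^2\sin^2\psi=c$. Finite energy forces $\psi_s\to0$ and $\sin\psi\to0$ along sequences with $s\to\pm\infty$, which gives $c=0$. Then $\psi_s=\pm D\sin\psi$, whose non-constant solutions are exactly heteroclinic orbits between consecutive multiples of $\pi$. Explicitly, these are $\tilde m\pi\pm2\arctan(e^{D(s-s_0)})=\tilde m\pi\pm 2\arctan((\alpha\rho)^D)$. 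This is the form (\ref{all possible form for limiting H}).

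A constant profile would be a multiple of $\pi$, and since it lies in $\dot H^1$ it would vanish as a profile. Hence each non-zero profile carries energy exactly $E(Q)=4\pi D$. For the $\dot H^1$ norm, $\|\psi_m\|_{\dot H^1}^2\ge c_D>0$ uniformly, since all such profiles are rescalings and sign/shift changes of $Q$. The Pythagorean decomposition (\ref{eq:pythagorean decompo energy profile}) bounds $\sum_m\|\psi_m\|^2_{\dot H^1}$ by $\sup_t\|v(t)\chi_{1/4}\|^2_{\dot H^1}<\infty$. It follows that only finitely many profiles are non-zero, by the finiteness criterion (\ref{eq: finiteness criterion decompo}).

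The main obstacle is the justification of the limit in the nonlinear term and of the profile's finite energy. This requires separating the sequence $w_k$ from the weak limit of the cutoff. In particular, the weak limit taken in Corollary \ref{thm:abstract profile decomposition with weak limit first} concerns $v\chi_{1/4}$ and not $v$. It also includes contributions from other scales, which could shift $\psi_m$ by constants. We would handle this by working on annuli $[a,b]$, where by separation of scales the other profiles converge to constants in $\pi\mathbb Z$ or vanish. The profiles at larger scales evaluated near $\rho\sim\lambda_{m,k}$ approach their value at $0$, and those at smaller scales approach their limit at infinity. We expect the constants absorbed in this way to be consistent with the integer $\tilde m$.
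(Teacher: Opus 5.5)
Your route is the paper's: the Palais--Smale estimate for $w_k=v(\lambda_{m,k}\,\cdot,t_k)$ via Lemma~\ref{lemma: lambda_k lessim sqrt T-t_k} and (\ref{eq: well-chosen sequence of times}), passage to the limit in the weak Euler--Lagrange equation, the first integral $(r\psi_r)^2-D^2\sin^2\psi=C$ with $C=0$ forced by finite energy, and energy quantization plus the Pythagorean bound for finiteness.

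One step does not hold as written: the claim that the $\dot H^1$ bound on annuli gives $w_k\to\psi_m$ locally uniformly. That bound gives equicontinuity (a uniform H\"older-$1/2$ modulus), but it does not bound the values. In two dimensions, the $\dot H^1$ bound for a function supported in $B_{1/2}$ only yields $|w_k(1)|\lesssim\sqrt{|\log\lambda_{m,k}|}$. So Arzel\`a--Ascoli does not apply, and without pointwise control of $w_k$ you cannot pass to the limit in $\sin(2w_k)$ or apply Fatou to $\sin^2 w_k$. The remedy in your last paragraph does not close this. Separation of scales controls $v(T)\chi_{1/4}$ and the other profiles on the annulus. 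However, the remainder $\varepsilon_{M,k}(\lambda_{m,k}\,\cdot)$ is only known to converge weakly to $0$ in $\dot H^1$, which says nothing about its values; it may drift by large constants. Pointwise control of the remainder is obtained only later, in Proposition~\ref{prop:strong convergence of remainder}, whose proof relies on the present proposition.

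The missing ingredient is an $L^\infty$ bound, which the paper takes from the comparison principle. Since the data are bounded by a multiple of $\pi$ on $[0,T]$, $v$ is uniformly bounded. Hence $w_k$ is bounded in $H^1_{loc}((0,\infty))$, which embeds compactly into $C^0_{loc}((0,\infty))$. After extraction, $w_k\to\psi_m+c_m$ locally uniformly for some constant $c_m$, which the weak $\dot H^1$ limit does not see. Alternatively, use the $\pi$-periodicity of $\sin 2w$ and $\sin^2 w$: replace $w_k$ by $w_k-\pi\lfloor w_k(1)/\pi\rfloor$, which is bounded at $\rho=1$ and equicontinuous.

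The classification then applies to $\psi_m+c_m$, and the statement is read up to this additive constant, as in the paper. No consistency between $c_m$ and $\tilde m$ needs checking, since the weak $\dot H^1$ limit only determines $\psi_m$ modulo constants. Your concern about the cutoff is also unnecessary: for test functions supported in a fixed compact subset of $(0,\infty)$, $\chi_{1/4}(\lambda_{m,k}\rho)=1$ once $k$ is large.
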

\begin{proof}

Take any $\eta(r) \in C^{\infty}_c((0,+\infty))$, fix $m$ and let $\eta_k(r) = \eta(r/\lambda_{m,k})$. For $k$ large enough, the support of $\eta_k$ is a subset of $(0,1/4)$. By Cauchy-Schwarz, we find that
\begin{align}
    |E'(v(t_k))[\eta_k]| &\lesssim ||v_t(t_k)||_{L^2([0,1/4],rdr)} \cdot ||\eta_k||_{L^2([0,1/4],rdr)} \notag \\
    &\lesssim \lambda_{m,k} ||v_t(t_k)||_{L^2([0,1/4],rdr)} \cdot ||\eta||_{L^2((0,+\infty),rdr)} \label{eq:palais smale condition}\\
    &\lesssim \sqrt{T-t_k} ||v_t(t_k)||_{L^2([0,1/4],rdr)} \cdot ||\eta||_{L^2((0,+\infty),rdr)} \to 0. \notag
\end{align}
This is the so-called Palais-Smale condition. Let $v_k(r) = v(\lambda_{m,k}r,t_k)$. This rewrites as
$$
DE(v_k)[\eta] := 2\pi \int_0^{\infty} \left( v_k'(r)\eta'(r) + \frac{D^2}{r^2}\sin(v_k(r))\cos(v_k(r))\eta(r) \right)rdr \to 0.
$$
Moreover,
$$
2\pi \int_0^{\infty} v_k'(r)\eta'(r) rdr \to 2\pi \int_0^{\infty} \psi_m'(r) \eta'(r) rdr
$$
by weak-convergence. As $v$ is bounded in $L^{\infty}$ (by comparison principle, see \cite{samuelian2025blowuptrees}, as the boundary and initial data $v_0$ is bounded by a large multiple of $\pi$ when $T < +\infty$), $v_k$ is a bounded sequence in $H^1_{loc}((0,+\infty)) \hookrightarrow C^{0,1/4}_{loc}((0,+\infty)) \subset \subset C^0_{loc}((0,+\infty))$. Thus, it converges weakly in $H^1_{loc}((0,+\infty))$ up to taking a further subsequence. Hence, the convergence is uniform on any compact set and the weak $H^1(\mathbb R^2)$ limit of $v_k$ is $\psi_m$ (up to changing $\psi_m$ by $\psi_m + c_m$ for some appropriate constant).

As $s \mapsto \sin s \cos s$ is Lipschitz on a bounded set containing the image of $v(t_k)$, we deduce that 
$$
DE(\psi_m)[\eta] = 2\pi \int_0^{\infty} \left(  \psi_m'(r)\eta'(r) + \frac{D^2}{r^2}\sin(\psi_m(r))\cos(\psi_m(r))\eta(r) \right)rdr = 0
$$
for any $\eta \in C^{\infty}_c((0,+\infty))$, i.e., $\psi_m(r)$ is a stationary harmonic map profile, which must be smooth on $(0,+\infty)$ and solve the classical problem 
\begin{align}
     0 = \psi_{rr} + \frac{\psi_r}{r} - D^2 \frac{\sin(2\psi)}{2r^2}, \quad r \in (0,+\infty), \label{time independent ode for H v2}
\end{align}
by elliptic regularity. As $v$ and $v_k$ have finite energy, so does $\psi_m(r)$, which implies that $\psi_m(r)$ extends continuously at zero with $\psi_m(0) \in \pi \mathbb Z$.  All such solutions are of the form (\ref{all possible form for limiting H}).

 Indeed, change variables $r = e^s$. Then $\tilde{\psi}(s) = \psi(e^s)$ solves
 \begin{align}
     0 = \tilde{\psi}_{ss} - D^2 \frac{\sin(2\tilde{\psi})}{2}, \quad s \in (-\infty,+\infty).
\end{align}
 In particular,
$$
     0 = 2\tilde{\psi}_{ss} \tilde{\psi}_{s} - 2D^2 \sin(\tilde{\psi}) \cos(\tilde{\psi}) \tilde{\psi}_s, \quad s \in (-\infty,+\infty),
 $$
 meaning that
 $$
 0 = \frac{d}{ds}[\tilde{\psi}_{s}^2] - D^2 \frac{d}{ds}[ \sin(\tilde{\psi})^2], \quad s \in (-\infty,+\infty),
 $$
which integrates to
 $$
C = (r\psi_r)^2 - D^2  \sin(\psi)^2, \quad r \in (0,+\infty).
 $$
As $\psi$ has finite energy, 
$$
\frac{C}{r} =  \left(\psi_r^2 - \frac{D^2}{r^2}  \sin(\psi)^2 \right) r\in L^{1}([0,1],dr)
$$
meaning that $C = 0$. Then, one solves the equation using separation of variables in order to obtain the solutions (\ref{all possible form for limiting H}).

In particular, there is a maximal number of non-constant bubbles as they carry a lower-bounded amount of energy, i.e., $\psi_M = 0$ for all $M$ large enough. 
\end{proof}

In the following, take $M$ minimal such that $\psi_{M+i} = 0$ for all $i \geq 1$.

 We carry on with a few energy facts proving that there is no concentration of energy at the self-similar scale.
\begin{proposition}[No concentration at self-similar scale]
    One has
    \begin{align}
        \lim \limits_{t \to T}E(v(t,\cdot)-v(T,\cdot);B_{1/2}(0) \setminus B_{\alpha \sqrt{T-t}}(0)) &= 0, \quad \alpha \in (0,1], \label{eq: no concentration at self-similar scale}\\
    \lim \limits_{t \to T}||v(t,\cdot)-v(T,\cdot)||_{H^1(B_{1/2}(0) \setminus B_{\alpha \sqrt{T-t}}(0))} &= 0, \quad \alpha \in (0,1].\notag
\end{align}

\end{proposition}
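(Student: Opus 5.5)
Throughout, write $w(t) = v(t)-v(T)$ and $A_t = B_{1/2}(0)\setminus B_{\alpha\sqrt{T-t}}(0)$. The plan is to show that $v(t)\to v(T)$ smoothly away from the origin, and then to rule out energy sitting in the intermediate region $\alpha\sqrt{T-t}\le r\le \delta$ by a Struwe-type localized energy inequality combined with a scaling argument.

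\textbf{Step 1 (convergence away from the origin).} On any annulus $\delta\le r\le 1/2$ the nonlinearity $D^2\sin(2v)/(2r^2)$ is bounded, because $v\in L^\infty$ by the comparison principle. Parabolic regularity then gives uniform H\"older and $C^{1}$ bounds there, using the same parabolic Sobolev embedding invoked in Lemma \ref{lemma: lambda_k lessim sqrt T-t_k}. Consequently $v(t)\to v(T)$ in $C^1([\delta,1/2])$. It follows that $E(w(t);B_{1/2}\setminus B_\delta)\to 0$, and by boundedness also $\|w(t)\|_{H^1(B_{1/2}\setminus B_\delta)}\to0$. So it suffices to show
\[
\lim_{\delta\to0}\limsup_{t\to T}E(w(t);B_\delta\setminus B_{\alpha\sqrt{T-t}})=0 .
\]
Since $v(T)$ has finite energy, $E(v(T);B_\delta)\to0$ as $\delta\to0$. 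By the reverse triangle inequality for $E^{1/2}$, it is therefore enough to prove the same bound with $E(v(t);\cdot)$ in place of $E(w(t);\cdot)$.

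\textbf{Step 2 (localized energy inequality).} Apply the localized identity (\ref{eq:equality involving v_t}) with the cut-off $\chi=\chi_{r\le\delta}$, integrated from $t$ to $s$ and letting $s\to T$. The error term is bounded by
\[
(T-t)\,\delta^{-2}\sup E \;+\; \int_t^T\!\!\int_{B_{2\delta}}|v_t|^2 ,
\]
and both pieces tend to $0$ as $t\to T$ for fixed $\delta$, the second by (\ref{eq: finite L^2-norm of v_t}). Passing to the limit in $s$ is justified because $v(s)\to v(T)$ smoothly away from $0$. This yields
\[
\lim_{t\to T}E(v(t);B_\delta)\big|_{\chi} = E(v(T);B_\delta)\big|_\chi + \mu ,
\]
where $\mu$ is the energy defect concentrated at the origin, which is independent of $\delta$. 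Hence the total energy in the ball $B_\delta$ is the energy of $v(T)$ plus a point mass $\mu$. The statement then reduces to showing that all of $\mu$ lies inside $B_{\alpha\sqrt{T-t}}$.

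\textbf{Step 3 (main obstacle: $\mu$ concentrates inside the self-similar ball).} First, use the rescaled cut-off $\chi_{r\le R\sqrt{T-t}}$ in the same identity on the time interval $[t,T)$. The $|\nabla\chi|^2$ term contributes at most $(T-t)\cdot (R^2(T-t))^{-1}\,E(v;\mathrm{supp}\nabla\chi)$, which is of size $R^{-2}$ times the energy in the annulus around $R\sqrt{T-t}$. Taking $R$ large therefore shows that the energy of $v(s)$ inside $B_{R\sqrt{T-s}}$ varies little over $s\in[t,T)$. Second, I expect the decisive input to be a rescaling argument. Set $V_t(\rho,\tau)=v(\sqrt{T-t}\,\rho,\,t+(T-t)\tau)$. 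The bound $\int_t^T\!\int|v_t|^2\to0$ becomes $\int_0^1\!\int|\partial_\tau V_t|^2\,\rho\,d\rho\,d\tau\to0$, so along subsequences $V_t$ converges to a static finite-energy harmonic map on compact subsets of $\{\rho>0\}$ (compare the proof of the preceding finiteness proposition). Energy that sits in the region $\rho\ge\alpha$ at $\tau=0$ cannot escape to $\rho=0$ during $\tau\in[0,1)$, because the localized energy inequality with cut-offs at scale $\alpha/2$ has error of order $\alpha^{-2}\int_0^1\!\int|\partial_\tau V|^2\to0$. At $\tau\to1$ this energy would then have to be located at positive rescaled radius, i.e. at radius $\gtrsim\alpha\sqrt{T-t}$ at times close to $T$. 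But at physical times $s\to T$ the only energy near the origin is $\mu$, concentrated at scales $\lambda\ll\sqrt{T-s}$ by Lemma \ref{lemma: lambda_k lessim sqrt T-t_k}, together with the small energy of $v(T)$. This contradiction forces the energy in $A_t$ to tend to $0$. I expect that making this scale-by-scale transport of energy rigorous, using nested cut-offs and choosing $\delta$ and $R$ compatibly, will be the main technical difficulty.

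Finally, the $H^1$ statement follows from the energy statement. In dimension $2$, $\|w\|_{L^2(B_{1/2})}\to0$ by Step 1 on $r\ge\delta$ together with $L^\infty$ bounds on the small ball. Moreover, $\|\partial_r w\|_{L^2}^2\le E(w)/\pi$, so the gradient part is controlled by the energy bound already obtained.
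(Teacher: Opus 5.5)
Your Step 1, the reduction to bounding $\limsup_{t\to T}E(v(t);B_\delta\setminus B_{\alpha\sqrt{T-t}})$ by something that vanishes as $\delta\to0$, and the final deduction of the $H^1$ statement are correct and match the paper. The gap is in Step 3, which carries the whole content of the proposition: it is not proved, and neither ingredient you offer for it works.

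\textbf{The error bound.} In Step 2 and in the first half of Step 3 you split the cross term $2\iint \chi\,\nabla\chi\cdot\nabla\Phi\,\Phi_t$ by Young's inequality. This costs $(s-t)\|\nabla\chi\|_\infty^2\sup E$. For a cut-off at the self-similar scale $\alpha\sqrt{T-t}$, on a time interval of length at most $T-t$, this is of size $\alpha^{-2}\sup E$, which does not vanish. That is what pushes you to large $R$ and to a compactness/contradiction scheme. Your later claim that the error is of order $\alpha^{-2}\iint|\partial_\tau V|^2$ is not what either splitting gives: Young gives $\alpha^{-2}\sup E$, while Cauchy--Schwarz gives $\alpha^{-1}(\sup E)^{1/2}(\iint|\partial_\tau V|^2)^{1/2}$.

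\textbf{The closing step.} You close by asserting that, as $s\to T$, the only energy near the origin is $\mu$, concentrated at scales $\lambda\ll\sqrt{T-s}$, citing Lemma \ref{lemma: lambda_k lessim sqrt T-t_k}. That lemma gives only $\lambda_{m,k}\lesssim\sqrt{T-t_k}$. It applies only to the profile scales along the particular sequence $t_k$, and it says nothing about non-bubble energy. The improvement $\lambda_{m,k}\ll\sqrt{T-t_k}$ is Corollary \ref{cor: lambda_k ll sqrt T-t_k}, whose proof uses the very proposition you are proving. Moreover, the claim that no energy sits at scales $\gtrsim\sqrt{T-s}$ is essentially the proposition itself. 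So the argument is circular at its decisive point, and you leave its rigorous form as ``the main technical difficulty''.

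\textbf{The missing idea.} Bound the cross term by Cauchy--Schwarz in space-time, and compare with a later time at a fixed physical radius. No rescaling, harmonic-map limit or contradiction is needed. Fix $\delta<1/4$. Take $\chi=1$ on $[\alpha\sqrt{T-t},\delta]$, supported in $[\alpha\sqrt{T-t}/2,2\delta]$, with $|\nabla\chi|\lesssim(\alpha\sqrt{T-t})^{-1}$. Integrating (\ref{eq:equality involving v_t}) over $[t,s]$, $t<s<T$, gives
\begin{align*}
E(v(t);[\alpha\sqrt{T-t},\delta]) &\le E(v(s);[\alpha\sqrt{T-t}/2,2\delta]) + C\int_t^T\!\!\int_0^{1/2}|v_t|^2\,r\,dr\,d\tau \\
&\quad + \frac{C\sqrt{s-t}}{\alpha\sqrt{T-t}}\Big(\sup_{\tau<T}E(v(\tau))\Big)^{1/2}\Big(\int_t^T\!\!\int_0^{1/2}|v_t|^2\,r\,dr\,d\tau\Big)^{1/2}.
\end{align*}
Since $\sqrt{s-t}\le\sqrt{T-t}$, both error terms are $o_{t\to T}(1)$, uniformly in $s$, by (\ref{eq: finite L^2-norm of v_t}). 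For fixed $t$ the annulus $[\alpha\sqrt{T-t}/2,2\delta]$ stays away from the origin. Your Step 1 therefore lets you send $s\to T$ and replace $v(s)$ by $v(T)$. This gives
\[
\limsup_{t\to T}E(v(t);B_\delta\setminus B_{\alpha\sqrt{T-t}})\le E(v(T);B_{2\delta}),
\]
which is exactly what your reduction requires. This is the paper's proof. Your intuition that energy at rescaled radius $\ge\alpha$ cannot escape during rescaled time $[0,1)$ is right. But the endpoint $s\to T$ must be handled by smooth convergence on a fixed annulus, not by information about bubble scales.
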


\begin{proof}
    Fix $\alpha \in (0,1]$ and $\eta > 0$. Let $0 < \varepsilon < 1/4$ be such that
    $$
E(v(T,\cdot);B_{2\varepsilon}(0)) \leq \eta.
    $$
Assume that $[a/2,2b] \subset [0,1/2)$, $0 \leq \chi \leq 1$ and $\chi \in C^{\infty}_c(B_{2b}(0) \setminus B_{a/2}(0))$ is $1$ on $[a,b]$, $|\chi_r(r)| \lesssim \max\{a^{-1},b^{-1}\}$ if $a > 0$ and  $|\chi_r(r)| \lesssim b^{-1}$ if $a = 0$. Observe that
$$
 E(v(t);[a,b])  \leq \frac{1}{2} \int_{B_{2b} \setminus B_{a/2} }  |\nabla \Phi(x,t)\chi(x)|^2 dx \leq E(v(t);[a/2,2b]), \quad t \in (0,T). 
$$
In particular,
\begin{align}
E(v(t_2);[a,b]) - E(v(t_1);[a/2,2b]) &\leq \frac{1}{2} \int_{B_{2b} \setminus B_{a/2} }  |\nabla \Phi(x,t)\chi(x)|^2 dx \bigg\rvert^{t = t_2}_{t = t_1} \notag \\
        &= \mathrm{sgn}(t_2-t_1)\int_{[t_1,t_2]} \int_{B_{2b} \setminus B_{a/2} } \frac{1}{2}\frac{d}{dt} \left( |\nabla \Phi|^2 \chi^2 \right) dx dt \notag 
\end{align}
no matter whether $t_2 \geq t_1$ or $t_1 \geq t_2$. Here, $\int_{[t_1,t_2]}$ is understood as the Lebesgue integral on $[\min\{t_1,t_2\},\max\{t_1,t_2\}]$. Going back to (\ref{eq:equality involving v_t}) and using Cauchy-Schwarz inequality with the integrand $2 \chi (\nabla \chi)^T \nabla \Phi \Phi_t$, we obtain that
\begin{align}
&E(v(t_2);[a,b]) - E(v(t_1);[a/2,2b]) 
    \lesssim  \int_{[t_1,t_2]} \int_{0}^{1/2} |v_t|^2rdrdt  \notag \\
    &+ \max\{1_{a > 0}a^{-1},b^{-1}\}  \cdot \sqrt{|t_2-t_1|} \sup_{t \in [0,T]} E(v;[0,1])^{1/2} \cdot \left( \int_{[t_1,t_2]}\int_{0}^{1/2} |v_t|^2 rdrdt \right)^{1/2}. \label{eq: better estimate for no concentration at self similar scale}
\end{align}

Apply (\ref{eq: better estimate for no concentration at self similar scale}) with fixed $t_2 \in (0,T)$, $t_1 \to T$, $[a,b] = [\alpha \sqrt{T-t_2}, \varepsilon]$. It yields that
\begin{align*}
    &E(v(t_2,\cdot);B_{\varepsilon}(0) \setminus B_{\alpha \sqrt{T-t_2}}(0))- E(v(T,\cdot);B_{2\varepsilon}(0) \setminus B_{\alpha \sqrt{T-t_2}/2}(0))
\\
&\lesssim \int_{t_2}^{T} \int_{0}^{1/2} |v_t|^2rdrdt + \left( \int_{t_2}^T \int_{0}^{1/2} |v_t|^2rdrdt \right)^{1/2}.
\end{align*}
Take the limsup as $t_2 \to T$ to conclude that 
\begin{equation*}
 \limsup_{t \to T} E(v(t,\cdot);B_{\varepsilon}(0) \setminus B_{\alpha \sqrt{T-t}}(0)) \leq  E(v(T,\cdot);B_{2\varepsilon}(0)) \leq \eta.
\end{equation*}
By triangle inequality, 
\begin{equation}
\limsup_{t \to T} E(v(t,\cdot)-v(T,\cdot);B_{\varepsilon}(0) \setminus B_{\alpha \sqrt{T-t}}(0)) \lesssim 2 \eta.\label{eq: no concentration at self similar scale, inner region}
\end{equation} 

On the region $B_{1/2}(0) \setminus B_{\varepsilon}(0)$, $v(r,t)$ is smooth up to $T$. Hence, 
\begin{equation}
 \limsup_{t \to T}  E(v(t,\cdot)-v(T,\cdot);B_{1/2}(0) \setminus B_{\varepsilon}(0)) = 0. \label{eq: no concentration at self similar scale, outer region}
\end{equation}
Combine (\ref{eq: no concentration at self similar scale, inner region}) and (\ref{eq: no concentration at self similar scale, outer region}), then let $\eta \to 0$ to conclude. This directly implies convergence to zero for the $\dot{H}^1$-norm as well. Convergence in the $L^2$-norm follows by dominated convergence ($v$ is bounded and the domain of integration is bounded).
\end{proof}

\begin{corollary}\label{cor: lambda_k ll sqrt T-t_k}
    One actually has $\lambda_{m,k} \ll \sqrt{T-t_k}$.
\end{corollary}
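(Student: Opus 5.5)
We argue by contradiction and combine Lemma \ref{lemma: lambda_k lessim sqrt T-t_k} with the Proposition on no concentration at the self-similar scale. Since $\lambda_{m,k} \lesssim \sqrt{T-t_k}$ is already known, it is enough to exclude the case
$$\limsup_{k} \lambda_{m,k}/\sqrt{T-t_k} > 0 .$$
Suppose this limsup is positive. Passing to a subsequence, we may assume $\lambda_{m,k} \geq c\sqrt{T-t_k}$ for some $c>0$ and all $k$.

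Because $\psi_m$ is non-constant of the form (\ref{all possible form for limiting H}), we can fix $0<a<b<\infty$ such that $\psi_m$ carries a positive amount of energy on the annulus $[a,b]$:
$$\delta := \pi\int_a^b \left((\psi_m')^2 + \tfrac{D^2}{r^2}\sin^2\psi_m\right) r\,dr > 0.$$
Set $\alpha := \min\{1, ca\}$. Then $\lambda_{m,k} a \geq \alpha\sqrt{T-t_k}$. We also have $\lambda_{m,k} b \leq 1/4$ for $k$ large, since $\lambda_{m,k}\to 0$. Consequently the rescaled annulus $\lambda_{m,k}[a,b]$ lies inside $B_{1/2}(0)\setminus B_{\alpha\sqrt{T-t_k}}(0)$.

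Apply (\ref{eq: no concentration at self-similar scale}) with this $\alpha$. By scale invariance of the two-dimensional energy, the rescaled functions $w_k(\rho) := (v(t_k)-v(T))(\lambda_{m,k}\rho)$ satisfy $E(w_k;[a,b]) \to 0$. In particular $\|\partial_\rho w_k\|_{L^2([a,b],\rho d\rho)} \to 0$.

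We next check that the background $v(T,\lambda_{m,k}\rho)$ contributes nothing on $[a,b]$ in the limit. Its energy on $[a,b]$ equals $E(v(T);\lambda_{m,k}[a,b])$, which tends to zero by absolute continuity of the integral, since $v(T)$ has finite energy and the region shrinks to the origin. Hence $v(t_k,\lambda_{m,k}\rho)$ has $\dot H^1([a,b])$-seminorm tending to $0$.

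On the other hand, $v(t_k,\lambda_{m,k}\cdot)$ converges weakly in $H^1_{loc}$ to $\psi_m$ (up to a constant), as shown in the proof of the finiteness Proposition. By weak lower semicontinuity,
$$\int_a^b (\psi_m')^2 \rho\, d\rho \leq \liminf_k \int_a^b (\partial_\rho v(t_k,\lambda_{m,k}\rho))^2\rho\, d\rho = 0 .$$
So $\psi_m$ is constant on $[a,b]$. Since $\psi_m$ is a non-constant solution of the form (\ref{all possible form for limiting H}), which is strictly monotone, this is impossible. This contradiction shows $\lambda_{m,k}/\sqrt{T-t_k} \to 0$.

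The only delicate point is bookkeeping: making sure the annulus $\lambda_{m,k}[a,b]$ sits in the region where (\ref{eq: no concentration at self-similar scale}) applies. This is handled by the choice $\alpha = \min\{1,ca\}$ together with $\lambda_{m,k}\to 0$.
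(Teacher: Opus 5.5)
Your proof is correct and follows the paper's argument. Both proofs use a subsequence with $\lambda_{m,k}\gtrsim\sqrt{T-t_k}$, a fixed annulus $[a,b]$ on which $\psi_m$ is non-constant, absolute continuity to discard $v(T)$ on $\lambda_{m,k}[a,b]$, weak lower semicontinuity, and (\ref{eq: no concentration at self-similar scale}) on $[\alpha\sqrt{T-t_k},1/4]$; the only differences are that you run the steps in the opposite order and need only the lower bound $\lambda_{m,k}\ge c\sqrt{T-t_k}$ (with $\lambda_{m,k}\to 0$ from the Remark), so Lemma \ref{lemma: lambda_k lessim sqrt T-t_k} is not actually needed in your version.
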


\begin{proof}
    Assume not. Then, up to passing to a subsequence,
    $$
    \frac{\lambda_{m,k}}{\sqrt{T-t_k}} \to c > 0
    $$
    by Lemma \ref{lemma: lambda_k lessim sqrt T-t_k}. As the profile $\psi_m$ is non-constant, there is $0 < a < b < +\infty$ for which
    $$
\int_a^b |\psi_m'(r)|^2 rdr > 0.
    $$
    As $\psi_m(r)$ is the weak $\dot{H}^1(B_b(0) \setminus B_a(0)))$-limit of $v(t_k, \lambda_{m,k}\rho)$, one obtains
  \begin{align*}
      0 < \int_a^b |\psi_m'(r)|^2 rdr &\leq \liminf_{k \to +\infty} \int_a^b |\lambda_{m,k} \partial_r v(t_k, \lambda_{m,k}\rho)|^2 \rho d\rho \\
&\leq \liminf_{k \to +\infty} \int_{a\lambda_{m,k}}^{b\lambda_{m,k}} | \partial_r v(t_k, s)|^2 sds.
  \end{align*}
   Since $v(T)$ has finite energy,
   $$
\lim \limits_{k \to +\infty} \int_{a\lambda_{m,k}}^{b\lambda_{m,k}} | \partial_r v(T, s)|^2 sds = 0
   $$
   by absolute continuity of the Lebesgue integral, hence
   $$
   \liminf_{k \to +\infty} \int_{a\lambda_{m,k}}^{b\lambda_{m,k}} | \partial_r v(t_k, s)-\partial_r v(T,s)|^2 sds > 0.
   $$
    But for $k$ large enough, one has
    $$
    [a \lambda_{m,k}, b\lambda_{m,k}] \subset [\alpha \sqrt{T-t_k}, 1/4]
    $$
    for some $\alpha < \min\{1,(ac)/2\}$. This contradicts (\ref{eq: no concentration at self-similar scale}).
\end{proof}

\begin{corollary}[No concentration at self-similar scale]\label{cor:no concentration at self-similar scale, epsilon version}
        One has
    \begin{align}
        \lim \limits_{k \to +\infty}E(\varepsilon_{M,k};B_{1/4}(0) \setminus B_{\alpha \sqrt{T-t_k}}(0)) &= 0, \quad \alpha \in (0,1].\label{eq: no concentration at self-similar scale for the remainder}
    \end{align}
\end{corollary}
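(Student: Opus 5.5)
We write out the remainder explicitly and compare it with $(v(t_k)-v(T))\chi_{1/4}$, whose energy on the annulus is controlled by the previous proposition. By the profile decomposition with $M$ chosen minimal as above,
\[
\varepsilon_{M,k} = (v(t_k)-v(T))\chi_{1/4} - \sum_{m=1}^M \psi_m(\cdot/\lambda_{m,k}).
\]
Write $A_k := B_{1/4}(0)\setminus B_{\alpha\sqrt{T-t_k}}(0)$. By the triangle inequality for $E^{1/2}$ it suffices to show two things:
\[
E\big((v(t_k)-v(T))\chi_{1/4};A_k\big)\to 0 \quad\text{and}\quad E\big(\psi_m(\cdot/\lambda_{m,k});A_k\big)\to 0 \ \text{ for each } m .
\]
Each energy is bounded by a multiple of $\dot H^1(A_k)$-norm squared plus $\int_{A_k} r^{-2}\sin^2(\cdot)\,rdr$. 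For functions vanishing suitably this is at most $\int_{A_k} |w|^2 r^{-2} r\,dr$, and on the annulus this term is controlled via $H^1$-closeness and the boundedness of $v$. Alternatively, one can work directly with the $\dot H^1$ quantity that the abstract decomposition controls.

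\textbf{First term.} On $A_k$ we have $\chi_{1/4}=1$, since $\chi_{1/4}$ equals $1$ on $r\le 1/4$. Hence $(v(t_k)-v(T))\chi_{1/4} = v(t_k)-v(T)$ there. Since $A_k\subset B_{1/2}(0)\setminus B_{\alpha\sqrt{T-t_k}}(0)$, the claim follows directly from (\ref{eq: no concentration at self-similar scale}) evaluated along $t=t_k$. Both the energy and the $H^1$ statements are available there.

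\textbf{Profile terms.} By scaling invariance of the two-dimensional energy,
\[
E\big(\psi_m(\cdot/\lambda_{m,k});A_k\big) = E\big(\psi_m; B_{1/(4\lambda_{m,k})}\setminus B_{\alpha\sqrt{T-t_k}/\lambda_{m,k}}\big) \le E\big(\psi_m; \{r\ge \alpha\sqrt{T-t_k}/\lambda_{m,k}\}\big).
\]
By Corollary \ref{cor: lambda_k ll sqrt T-t_k}, $\sqrt{T-t_k}/\lambda_{m,k}\to+\infty$. The profile $\psi_m$ has finite energy, being of the form (\ref{all possible form for limiting H}), and so its energy on the exterior region $\{r\ge R_k\}$ with $R_k\to\infty$ tends to zero by dominated convergence. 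A subtlety is that the proof of the finiteness proposition allowed the profiles to be shifted by constants $c_m$. The energy of $\tilde m\pi\pm 2\arctan((\alpha r)^D)$ involves $\sin^2$, which is invariant under shifts by $\pi\mathbb Z$. For the $\dot H^1$ part, constants do not matter.

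\textbf{Main obstacle.} The delicate point is the nonlinear potential term $\frac{D^2}{r^2}\sin^2$, which is not additive. The remainder is a difference of functions, so $E(\varepsilon_{M,k};A_k)$ is not simply obtained from energies of the pieces unless the energy is interpreted as the $E$ functional applied to $\varepsilon_{M,k}$ itself, i.e.\ with $\sin^2(\varepsilon_{M,k})$. I would use the triangle inequality stated before Lemma \ref{lemma: lambda_k lessim sqrt T-t_k}, which is valid for $E^{1/2}$ as the norm-like quantity $\|\nabla\Phi\|_{L^2}$ seminorm structure. If that inequality is not directly applicable to sums with constants, I would fall back on $\sin^2 x\le x^2$. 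I would then bound $\int_{A_k} r^{-1}|w|^2dr$ for each piece by Hardy-type control on the annulus, using that each piece tends to a multiple of $\pi$ at the inner radius. Finally, I would absorb the mismatch using the $H^1$ convergence in (\ref{eq: no concentration at self-similar scale}).
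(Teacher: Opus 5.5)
Your proposal is correct and follows the paper's proof. Like the paper, you write $\varepsilon_{M,k}$ as $(v(t_k)-v(T))\chi_{1/4}$ minus the rescaled profiles, apply the triangle inequality for $E^{1/2}$, and control the first piece by (\ref{eq: no concentration at self-similar scale}) and each profile by scaling invariance together with $\lambda_{m,k}\ll\sqrt{T-t_k}$ from Corollary \ref{cor: lambda_k ll sqrt T-t_k}. The fallback in your ``main obstacle'' paragraph is unnecessary. The triangle inequality for $E^{1/2}$ follows directly from $|\sin(a+b)|\le|\sin a|+|\sin b|$ and Minkowski's inequality, and $E$ is invariant under $w\mapsto -w$ and under shifts by $\pi\mathbb Z$.
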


\begin{proof}
    This follows from the profile decomposition, together with the triangle inequality for the energy, (\ref{eq: no concentration at self-similar scale}) and
\begin{align*}
        \lim \limits_{k \to +\infty}E(\psi_m(r/\lambda_{m,k});B_{1/4}(0) \setminus B_{\alpha \sqrt{T-t_k}}(0)) &= 0, \quad \alpha \in (0,1],\notag \\
    \lim \limits_{k \to +\infty}||\psi_m(r/\lambda_{m,k})||_{\dot{H}^1(B_{1/4}(0) \setminus B_{\alpha \sqrt{T-t_k}}(0))} &= 0, \quad \alpha \in (0,1],
\end{align*}
    because $\lambda_{m,k} \ll \sqrt{T-t_k}$ for any $m = 1, ..., M$ by Corollary \ref{cor: lambda_k ll sqrt T-t_k}.
\end{proof}

\subsection{Strong convergence of the remainder}
Next, we prove that the scaling symmetry explains all the energy loss of $v(t)$ when going from $t < T$ to time $T$. For this, we show that the remainder $\varepsilon_{M,k}$ of our profile decomposition converges strongly to zero as $k \to +\infty$.

 \begin{lemma}[Small energy implies proximity to a multiple of $\pi$] \label{lemma:closeness to l0 pi}
 For all $\pi/2 > \varepsilon > 0$, $R_0 > 1$, there exists $\eta > 0$ such that for all $0 \leq R_1 < R_2 \leq 1$, $R_2/R_1 \geq R_0$, $v(r)$ with $E(v;[R_1,R_2]) < \eta$, there is a unique $l_0 \in \mathbb Z$ for which $|v(r)-l_0\pi| < \varepsilon$ on $[R_1,R_2]$. If, in addition, $\varepsilon \leq \varepsilon_0 \leq \pi/2$ for some $\varepsilon_0$ small enough, then $\mathcal{E}(v-l_0\pi;[R_1,R_2]) \leq 2E(v;[R_1,R_2])$, where
 $$
E(v;[R_1,R_2]) := \int_{R_1}^{R_2} \left( |v_r|^2 + \frac{D^2}{r^2}|\sin(v)|^2 \right) rdr, \quad \mathcal{E}(v;[R_1,R_2]) := \int_{R_1}^{R_2} \left( |v_r|^2 + \frac{D^2}{r^2}|v|^2 \right) rdr.
 $$
 \end{lemma}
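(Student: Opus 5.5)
The plan is to use the classical Bogomol'nyi-type primitive. Set
\[
G(v) := \int_0^{v} |\sin s|\, ds,
\]
which is continuous, odd and strictly increasing. It satisfies $G(v+\pi) = G(v) + 2$, and near each $l\pi$ one has $G(v) - G(l\pi) \approx \tfrac12 |v-l\pi|(v-l\pi)$. For $R_1 \le r_1 < r_2 \le R_2$, Cauchy--Schwarz (or $2ab \le a^2+b^2$) gives
\[
|G(v(r_2)) - G(v(r_1))| \le \int_{r_1}^{r_2} |v_r|\,|\sin v|\, dr \le \frac{1}{2D}\int_{r_1}^{r_2}\Big(v_r^2 + \frac{D^2}{r^2}\sin^2 v\Big) r\,dr \le \frac{1}{2D}E(v;[R_1,R_2]).
\]
Hence the oscillation of $G\circ v$ on $[R_1,R_2]$ is at most $\eta/(2D)$. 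Since $G$ is strictly increasing, the oscillation of $v$ is small wherever $v$ stays away from $\pi\mathbb Z$, and it is of order $\sqrt{\eta}$ near a multiple of $\pi$. In both cases the oscillation of $v$ is at most some $\omega(\eta) \to 0$, uniformly in the base point, because $G$ has a continuous inverse which is uniformly continuous thanks to the periodicity $G(v+\pi)=G(v)+2$. If $R_1 = 0$, I would work on $(0,R_2]$ and pass to limits. Finite energy forces $v$ to have a limit at $0$, and the estimate still holds.

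Next I locate $l_0$ using $R_2/R_1 \ge R_0$. I pick the point $r_* \in [R_1,R_2]$ minimizing $\sin^2 v$. Then
\[
\eta > E \ge D^2 \sin^2 v(r_*) \int_{R_1}^{R_2} \frac{dr}{r} \ge D^2 \sin^2 v(r_*) \log R_0,
\]
so $\operatorname{dist}(v(r_*),\pi\mathbb Z) \le C\sqrt{\eta/\log R_0}$. Let $l_0$ be the nearest integer to $v(r_*)/\pi$. Combining this with the oscillation bound gives $|v - l_0\pi| \le C\sqrt{\eta/\log R_0} + \omega(\eta) < \varepsilon$ on all of $[R_1,R_2]$, once $\eta$ is small in terms of $\varepsilon$ and $R_0$. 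Uniqueness is immediate: $\varepsilon < \pi/2$, so two distinct multiples of $\pi$ cannot both lie within $\varepsilon$ of the same value $v(r)$.

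For the second claim, I set $w = v - l_0\pi$, so $|w| < \varepsilon \le \varepsilon_0$ and $\sin^2 v = \sin^2 w$. Choosing $\varepsilon_0$ small enough that $\sin^2 w \ge w^2/2$ for $|w| \le \varepsilon_0$ gives, pointwise,
\[
w_r^2 + \frac{D^2}{r^2} w^2 \le 2\Big(v_r^2 + \frac{D^2}{r^2}\sin^2 v\Big).
\]
Integrating yields $\mathcal E(v - l_0\pi;[R_1,R_2]) \le 2E(v;[R_1,R_2])$. Since $|\sin w| \ge 2|w|/\pi$ on $|w| \le \pi/2$, any $\varepsilon_0 \le \pi/2$ with $\sin^2 w \ge w^2/2$ on that range works; for instance $\varepsilon_0 = \pi/4$ suffices.

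The step needing the most care is the uniform oscillation control through $G$: the modulus $\omega(\eta)$ must be independent of which multiple of $\pi$ is involved, and the case $R_1 = 0$ must be handled. The $\pi$-quasi-periodicity of $G$ settles the first point, and a limiting argument settles the second.
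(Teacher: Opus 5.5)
Your proof is correct and is the standard Bogomol'nyi-primitive argument: you control the oscillation of $G\circ v$ via $|v_r\sin v|\le \frac{1}{2D}\bigl(v_r^2+\frac{D^2}{r^2}\sin^2 v\bigr)r$, and you find a point where $\sin^2 v\le \eta/(D^2\log R_0)$; the paper gives no proof of its own and simply cites Jendrej--Lawrie, Lemma 2.3, for this. One small correction: Jordan's inequality only gives $w^2\le \frac{\pi^2}{4}\sin^2 w$, and $\sin^2 w\ge w^2/2$ in fact fails near $|w|=\pi/2$, so justify $\varepsilon_0=\pi/4$ instead by the monotonicity of $\sin w/w$ on $(0,\pi/2]$ together with $\sin(\pi/4)/(\pi/4)=2\sqrt{2}/\pi>1/\sqrt{2}$.
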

 
 \begin{proof}
 See \cite{jendrej}, Lemma 2.3.
 \end{proof}

\begin{lemma}\label{lemma:extraction of scale mu_k}
    If $$
\liminf \limits_{k \to +\infty} E(\varepsilon_{M,k};[0,1/4]) > 0,
$$
then there exists $0 < \mu_k \leq  \sqrt{T-t_k}/4$, such that
$$
\liminf \limits_{k \to +\infty} E(\varepsilon_{M,k};[\mu_k/2,\mu_k]) > 0.
$$
\end{lemma}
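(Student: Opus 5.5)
The idea is a dyadic pigeonhole argument on the scales below $\sqrt{T-t_k}$, combined with the localization of energy provided by Corollary \ref{cor:no concentration at self-similar scale, epsilon version}. Set $\delta := \liminf_k E(\varepsilon_{M,k};[0,1/4]) > 0$. By Corollary \ref{cor:no concentration at self-similar scale, epsilon version} with $\alpha = 1/4$, the energy of $\varepsilon_{M,k}$ on $[\sqrt{T-t_k}/4, 1/4]$ tends to zero. Hence for $k$ large,
$$E(\varepsilon_{M,k};[0,\sqrt{T-t_k}/4]) \geq \delta/2.$$
So the energy of the remainder lives, asymptotically, inside the ball of radius $\sqrt{T-t_k}/4$. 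It remains to locate a single dyadic annulus $[\mu_k/2,\mu_k]$ carrying a fixed amount of it.

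I would argue by contradiction. Suppose that for every choice of $\mu_k \in (0,\sqrt{T-t_k}/4]$ we have $\liminf_k E(\varepsilon_{M,k};[\mu_k/2,\mu_k]) = 0$. This implies
$$\sup_{0<\mu\le \sqrt{T-t_k}/4} E(\varepsilon_{M,k};[\mu/2,\mu]) \to 0$$
along a subsequence. Indeed, if the supremum stayed above some $c>0$ along every subsequence, we could pick near-maximizers $\mu_k$ and they would give the conclusion directly. (If the liminf fails only along a subsequence, passing to subsequences is harmless, as the paper does throughout.) So assume the dyadic energies go to zero uniformly in $\mu \leq \sqrt{T-t_k}/4$.

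Next I would identify a profile in order to contradict the minimality of $M$ (finiteness of the decomposition, property 5). The intended route is to show that the remainder still concentrates at some scale $\nu_k$. This is the main obstacle: energy spread thinly over many dyadic annuli is not immediately excluded, because there are unboundedly many annuli. To rule out such spreading I would combine three facts.
\begin{itemize}
\item Lemma \ref{lemma:closeness to l0 pi}: small energy on each annulus forces $\varepsilon_{M,k}$, or rather $v(t_k)$ minus the bubbles, to be close to a multiple of $\pi$ there.
\item Continuity in $r$, which pins down a single integer $l_0$ over the whole region between consecutive bubble scales.
\item Control of $\mathcal{E}$ by $E$ from Lemma \ref{lemma:closeness to l0 pi}, which makes the energy essentially quadratic.
\end{itemize}

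With the energy quadratic, I would use the Palais–Smale condition (\ref{eq:palais smale condition}), localized to this region and tested against $\eta = (v - l_0\pi)\chi$ with suitable cut-offs at the bubble scales and at $\sqrt{T-t_k}$. This should give that $\int (v_r^2 + D^2 r^{-2}\sin^2 v)\,r\,dr$ over the region is bounded by $o(1)$ plus cut-off error terms. The cut-off errors are themselves controlled by the vanishing dyadic energies at the region's edges. The smallness of the $L^2$ part of the test function relative to $\sqrt{T-t_k}$ is what makes the $v_t$ term negligible, as in (\ref{eq:palais smale condition}). This gives total energy $o(1)$ for the remainder in $[0,\sqrt{T-t_k}/4]$, which contradicts the lower bound $\delta/2$.

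Finally, I would choose $\mu_k$ as a near-maximizer of $\mu \mapsto E(\varepsilon_{M,k};[\mu/2,\mu])$ over $(0,\sqrt{T-t_k}/4]$. This yields the required $0<\mu_k\le\sqrt{T-t_k}/4$ with $\liminf_k E(\varepsilon_{M,k};[\mu_k/2,\mu_k])>0$.
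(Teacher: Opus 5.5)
Your setup matches the paper's. The no-concentration corollary localizes the remainder's energy below $\sqrt{T-t_k}/4$. Via near-maximizers, the negation of the conclusion reduces to $\sup_{0<\mu\le\sqrt{T-t_k}/4}E(\varepsilon_{M,k};[\mu/2,\mu])\to 0$ along a subsequence. Here you are slightly more careful than the paper, which assumes this along the whole sequence. The idea of extracting a new profile is a dead end under this hypothesis, because every rescaled weak limit of $\varepsilon_{M,k}$ is then constant. The contradiction has to come from your lower bound on $E(\varepsilon_{M,k};[0,\sqrt{T-t_k}/4])$, as you eventually say.

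\textbf{The gap is in the Palais--Smale step.} Your test function $(v-l_0\pi)\chi$ requires $v(t_k)$ itself, not the remainder, to be uniformly close to $l_0\pi$ on $\supp\chi$. This holds only on necks $[A\lambda_{m,k},A^{-1}\lambda_{m+1,k}]$. Near $r\sim\lambda_{m,k}$ the function $v$ traverses a rescaled $\psi_m$, and the coercivity $w\sin w\cos w\ge\frac12\sin^2 w$ is unavailable. So the argument controls energy only on the necks. Your claim that the remainder has energy $o(1)$ on all of $[0,\sqrt{T-t_k}/4]$ does not follow, because the cores $[A^{-1}\lambda_{m,k},A\lambda_{m,k}]$ are never treated.

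There is a second problem with the edge terms. The cut-off error $\int v_r(v-l_0\pi)\chi'\,r\,dr\lesssim E(v;\text{edge annuli})$ involves dyadic energies of $v$, not of $\varepsilon_{M,k}$. At edges adjacent to a bubble these contain the tail $\tau(A):=\max_m E(\psi_m;(0,\infty)\setminus[A^{-1},A])$. This is small for $A$ large but not $o_k(1)$. So these errors are not controlled by vanishing dyadic energies, as you assert. The same tail enters when you apply Lemma \ref{lemma:closeness to l0 pi} to $v$ on the necks.

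\textbf{Repair.} The fix is short, and with it your route is a valid alternative.
\begin{itemize}
\item Fix $A$. Each core is covered by $O(\log A)$ dyadic annuli, and each carries $o_k(1)$ remainder energy by the contradiction hypothesis.
\item The triangle inequality for $E^{1/2}$ converts the neck bound for $v$ into one for $\varepsilon_{M,k}$, up to $O(\tau(A))+o_k(1)$.
\item Hence $\limsup_k E(\varepsilon_{M,k};[0,\sqrt{T-t_k}/4])\lesssim\tau(A)$, and letting $A\to\infty$ gives the contradiction. Alternatively, take $A_k\to\infty$ slowly enough that $\log A_k\cdot\sup_\mu E(\varepsilon_{M,k};[\mu/2,\mu])\to0$.
\end{itemize}

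\textbf{Comparison with the paper.} The paper tests $E'(v(t_k))$ against $e_k\eta_k$, with $e_k=\varepsilon_{M,k}-l\pi$ and $\eta_k=\eta(\cdot/\sqrt{T-t_k})$. Since it is the remainder whose dyadic energies vanish, chaining Lemma \ref{lemma:closeness to l0 pi} over dyadic annuli gives $\|e_k\|_{L^\infty([0,\delta\sqrt{T-t_k}])}\to0$ across all bubble scales at once. No neck/core splitting is needed. The price is a cross-term analysis between $e_k$ and $W_k$:
\begin{itemize}
\item the nonlinear term is killed by $\|e_k\|_{L^\infty}\to0$ together with $d(\psi_m)\in L^1(r^{-1}dr)$;
\item the linear term is killed by the weak convergence $\varepsilon_{M,k}(\lambda_{m,k}\cdot)\rightharpoonup0$.
\end{itemize}
Your version uses the equation for $v$ directly and avoids these cross terms. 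In exchange it needs the two-parameter limit and explicit use of the profiles' energy tails.
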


\begin{proof}
    Fix $\delta = 1/4$. Assume for a contradiction that
    $$
\lim \limits_{k \to +\infty} \sup_{0 < \mu < \delta \sqrt{T-t_k}}  E(\varepsilon_{M,k};[\mu/2,\mu]) =  0.
    $$
    Apply Lemma \ref{lemma:closeness to l0 pi} with $0< \tilde{\varepsilon} < \pi/4$ and $R_0 = 2$ on dyadic intervals $I_{j,k} = [2^{-j-1}\delta\sqrt{T-t_k}, 2^{-j}\delta\sqrt{T-t_k}]$, $j \geq 0$.
 Let $k_{\tilde{\varepsilon}}$ be large enough so that
    $$\sup_{0 < \mu < \delta\sqrt{T-t_k}}  E(\varepsilon_{M,k};[\mu/2,\mu]) \leq \eta_{\tilde{\varepsilon}}, k \geq k_{\tilde{\varepsilon}},$$
    where $\eta_{\tilde{\varepsilon}}$ is given by Lemma \ref{lemma:closeness to l0 pi}. Then 
   there exists $l_{k,j} \in \mathbb Z$ for which
    $$
    |\varepsilon_{M,k}(r) - l_{k,j} \pi| \leq \pi/4, \quad r \in I_{k,j}.  
    $$
    Looking at the common endpoint of $I_{j,k}$ and $I_{j+1,k}$, one deduces that
    $$
|l_{k,j}\pi - l_{k,j+1}\pi| \leq \pi/2
    $$
    which implies $l_{k,j} = l_{k,j+1} = l_k \in \mathbb Z$ does not depend on $j$ (for a similar reason, the $l_{k}$ do not depend on $\tilde{\varepsilon}$). Thus, we have proved that
    $$
        |\varepsilon_{M,k}(r) - l_{k} \pi| \leq \tilde{\varepsilon}, \quad r \in I_{k} := [0,\delta\sqrt{T-t_k}], k \geq k_{\tilde{\varepsilon}}. 
        $$
        Setting $\tilde{\varepsilon} = 1/n$ and passing to the subsequence $k_{1/n}$ (which we still denote using index $k$), we find that
        $$
\lim \limits_{k \to +\infty} \sup_{0 < r < \delta \sqrt{T-t_k}}  |\varepsilon_{M,k}(r) - l_{k} \pi| = 0.
        $$
        As $v$ and the profiles $\psi_m$ are bounded, we deduce that $l_k \pi$ is bounded uniformly with respect to $k$. Up to passing to a subsequence, we can assume that $l_k = l$ is constant. In other words,
        $$
\lim \limits_{k \to +\infty} ||\varepsilon_{M,k}-l\pi||_{L^{\infty}([0,\delta\sqrt{T-t_k}])} = 0.
        $$
        We will use this smallness assumption to obtain that
\begin{equation}
            \lim_{k \to +\infty} E(\varepsilon_{M,k};[0,\delta \sqrt{T-t_k}]) = 0, \label{eq: claim energy contradiction}
\end{equation}
        which is a contradiction

        Let $e_k(r) = \varepsilon_{M,k}(r) - l\pi$, $\rho_k = \sqrt{T-t_k}, \eta \in C^{\infty}_c([0,\delta)), \eta_k(r) = \eta(r/\rho_k)$. For $k$ large enough, the support of $\eta_k e_k$ is a subset of $[0,1/4)$. 
        As in (\ref{eq:palais smale condition}),
        $$
|E'(v(t_k))[e_k \eta_k]| \lesssim \sqrt{T-t_k} ||v_t(t_k)||_{L^2([0,1/4],rdr)} \cdot ||\eta||_{L^2((0,+\infty),rdr)} \cdot ||e_k||_{L^{\infty}([0,\rho_k])} \to 0.
$$
After integrating by parts, $E'(v(t_k))[e_k \eta_k] = o_k(1)$ rewrites as
$$2\pi\int_0^{\infty} \left( v_k'(r)[\eta'(r)R_k(r) +\eta(r) R_k'(r)] + \frac{D^2}{r^2}\sin(v_k(r))\cos(v_k(r))\eta(r)R_k(r) \right)rdr
$$        
where $v_k(r) = v(\rho_kr,t_k) = W_k(r) + R_k(r)$, $R_k(r) = e_k(\rho_k r)$. As $v$ is smooth at $r = 0, t = t_k$ (e.g., \cite[Proposition 3.8]{samuelian2025blowuptrees}) and $\eta_ke_k$ is bounded, the boundary term
$$
 2\pi[rv_k'(r)\eta_k(r)e_k(r)]_{r=0}  = 0
 $$
 vanished.

As $e_k$ is small and $v$ has finite energy, observe that
$$
\left| \int_0^{\infty} v_k'(r)\eta'(r)R_k(r) rdr\right| \leq ||v_r(\cdot,t_k)||_{L^2(rdr)} \cdot ||\eta'||_{L^2(rdr)} \cdot ||e_k||_{L^{\infty}([0,\delta\rho_k])} \to 0.
 $$
Similarly,
\begin{align*}
(NL) :  &\int_0^{\infty}\frac{1}{r^2}\left[ \sin(W_k+R_k)\cos(W_k+R_k) - \sin(R_k)\cos(R_k) \right] \eta(r)R_k(r) rdr \to 0,
\end{align*}
Indeed, write $$
W_k(r) = v(T,\rho_k r) + \sum_{m=1}^M \psi_m \left(\frac{\rho_k r}{\lambda_{m,k}} \right) + l \pi
$$
and $\lim \limits_{r \to 0} v(r,T) = c\pi$. Let
$$
d(x) := \mathrm{dist}(x,\pi \mathbb Z).
$$
As $s \mapsto \sin(s)\cos(s)$ is uniformly Lipschitz (its derivative is $\cos(2s)$) and $\pi$-periodic,
\begin{align*}
    |(NL)| &\lesssim \int_0^{\delta}\frac{1}{r^2}|d(W_k)\eta(r)R_k(r)|rdr  \\
    \int_0^{\delta}\frac{1}{r^2}|d(v(T,\rho_kr))\eta(r)R_k(r)|rdr  &\lesssim \int_0^{\delta}\frac{1}{r^2}|\sin(v(T,\rho_k r) - c \pi)\eta(r)R_k(r)|rdr \\
    &\lesssim E(v(T);[0,\delta\rho_k])^{1/2} \cdot E(e_k;[0,\delta\rho_k])^{1/2}  \cdot ||\eta||_{L^{\infty}} \to 0,
\end{align*} 
where we used that $R_k^2 \lesssim \sin(R_k)^2$, while
\begin{align*}
     &\int_0^{\delta}\frac{1}{r^2}\left| d\left[\psi_m\left( \frac{\rho_k r}{\lambda_{m,k}} \right)\right] \eta(r)R_k(r)\right|rdr \\
     &\lesssim ||d(\psi_m(r))||_{L^1(r^{-1}dr)} \cdot ||\eta||_{L^{\infty}} \cdot ||e_k||_{L^{\infty}([0,\delta\rho_k])} \to 0.
\end{align*}
 Hence, it follows that
$$
2\pi \int_0^{\infty} \left( v_k'(r)\eta(r)R_k'(r) + \frac{D^2}{r^2}\sin(R_k(r))\cos(R_k(r))\eta(r) R_k(r)\right)rdr \to 0.
$$
Assume that $\eta = 1$ on $[0,a] \subset [0,\delta)$ and observe that
\begin{align*}
    2\pi &\int_a^{\delta} \left( |v_k'(r)R_k'(r)| + \frac{D^2}{r^2}|\sin(R_k(r))\cos(R_k(r))R_k(r)|\right)rdr \\
    =  2\pi &\int_{a\rho_k}^{\delta\rho_k} \left( |v_r(r,t_k)e_k'(r)| + \frac{D^2}{r^2}|\sin(e_k(r))\cos(e_k(r))e_k(r)|\right)rdr \\
    \lesssim 2\pi&\int_{a\rho_k}^{\delta\rho_k} \left( |v_r(r,t_k)e_k'(r)| + \frac{D^2}{r^2}\sin(e_k(r))^2\right)rdr \\
    \lesssim 2\pi& E(e_k,B_{\delta}(0) \setminus B_{a \rho_k}(0))^{1/2} E(v(t_k),B_{\delta}(0))^{1/2} + 2\pi E(e_k,B_{\delta}(0) \setminus B_{a \rho_k}(0)) \to 0.
\end{align*}
thanks to Corollary \ref{cor:no concentration at self-similar scale, epsilon version}. Hence, for any fixed $0 < a < \delta$,
$$
2\pi \int_0^{a} \left( v_k'(r)R_k'(r) + \frac{D^2}{r^2}\sin(R_k(r))\cos(R_k(r)) R_k(r)\right)rdr \to 0.
$$
Note that
$$
(L) : 2\pi \int_0^{a} W_k'(r)R_k'(r)rdr \to 0.
$$
as well. Indeed,
$$
\left| \int_0^{a} \partial_r[v(\rho_kr,T)]R_k'(r) rdr \right| \leq ||v_r(s,T)||_{L^2([0,a\rho_k],rdr)} \cdot ||R_k'||_{L^2(rdr)}  \to 0
$$
by absolute continuity of the Lebesgue integral, while
\begin{align*}
   \int_0^{a} \partial_r[\psi_m(\rho_k \lambda_{m,k}^{-1}r)]R_k'(r) rdr  &= \int_0^{\infty} \psi_m'(r)\partial_r[e_k(\lambda_{m,k}r)] rdr \\
   &- \int_{a \rho_k \lambda_{m,k}^{-1}}^{\infty} \psi_m'(r)\partial_r[e_k(\lambda_{m,k}r)] rdr = (I) + (II)
   \end{align*}
Then $(I) \to 0$ using the weak $\dot{H}^1(\mathbb R^2)$-convergence of $\varepsilon_{M,k}(\lambda_{m,k}\cdot)$ to zero and 
$$
|(II)| \lesssim ||\psi_m'||_{L^2([a \rho_k \lambda_{m,k}^{-1},\infty),rdr)} \cdot E(\varepsilon_{M,k};[0,\delta])^{1/2} \to 0.
$$
In other words, for any fixed $0 < a < \delta$,
$$
2\pi \int_0^{a} \left( |R_k'(r)|^2 + \frac{D^2}{r^2}\sin(R_k(r))\cos(R_k(r)) R_k(r)\right)rdr \to 0.
$$
As $e_k$ is small, 
$$
\sin(R_k(r))\cos(R_k(r)) R_k(r) \geq \frac{1}{2}\sin(R_k(r))^2,
$$
which leads to the claim (\ref{eq: claim energy contradiction}) on $[0,a\sqrt{T-t_k}]$ after changing variables. On $[a\sqrt{T-t_k},\delta\sqrt{T-t_k}]$, use Corollary \ref{cor:no concentration at self-similar scale, epsilon version}.
\end{proof}

\begin{proposition}[Strong convergence of the remainder]\label{prop:strong convergence of remainder}
The remainder $\varepsilon_{M,k}$ of the profile decomposition converges strongly to zero in energy norm, i.e.
$$
\lim \limits_{k \to +\infty} E(\varepsilon_{M,k};[0,1/4]) = 0.
$$
In particular, the convergence holds in $\dot{H}^1(B_{1/4}(0))$. Moreover, 
$$
\lim \limits_{k \to +\infty}||\varepsilon_{M,k}(r)  + \sum_{m=1}^M \psi_m(\infty)||_{H^1(B_{1/4}(0))} = 0.
$$
\end{proposition}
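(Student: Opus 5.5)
We argue by contradiction, assuming $\liminf_{k} E(\varepsilon_{M,k};[0,1/4]) > 0$ along some subsequence. Lemma \ref{lemma:extraction of scale mu_k} then gives scales $0<\mu_k \leq \sqrt{T-t_k}/4$ with $\liminf_k E(\varepsilon_{M,k};[\mu_k/2,\mu_k])>0$. The goal is to turn $\mu_k$ into a new nonzero profile, which will contradict the maximality of $M$ (all profiles $\psi_{M+i}$ vanish). By Corollary \ref{cor:no concentration at self-similar scale, epsilon version}, any such energy must live at scales $\ll \sqrt{T-t_k}$, so $\mu_k/\sqrt{T-t_k} \to 0$ after passing to a subsequence.

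\textbf{Step 1: the rescaled remainder does not converge weakly to zero.} Consider $\varepsilon_{M,k}(\mu_k r)$. If $\mu_k/\lambda_{m,k}$ stays bounded away from $0$ and $\infty$ for some $m$, then, passing to a subsequence, rescaling by $\mu_k$ or by $\lambda_{m,k}$ are equivalent, and the weak limit is zero by property 3 of the decomposition. In that case we argue on $v_k(r)=v(\mu_k r,t_k)$ instead. This $v_k$ satisfies the Palais--Smale condition by the same computation as in \eqref{eq:palais smale condition}, since $\mu_k \lesssim \sqrt{T-t_k}$. As in the proof of finiteness of the profile decomposition, it converges locally uniformly on $(0,\infty)$ to a finite-energy harmonic map $\phi$. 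On $[1/2,1]$, the energy of $\phi$ then accounts for the local energy of $v_k$, so that locally the remainder converges \emph{strongly} in $H^1_{loc}$. Concretely, one runs the argument of Lemma \ref{lemma:extraction of scale mu_k} on the annulus $[1/4,2]$: test $E'(v(t_k))$ against $(v_k-\phi)\eta$, subtract $DE(\phi)[(v_k-\phi)\eta]=0$, and use the uniform convergence. This gives $E(v_k-\phi;[1/2,1])\to 0$. Since $v_k-\phi$ differs from $\varepsilon_{M,k}(\mu_k\cdot)$ only by the profiles at the other scales, whose energy on $[1/2,1]$ tends to zero by separation of scales (Remark \ref{rmk: pythagorean decomposition on any ball}), this contradicts $\liminf E(\varepsilon_{M,k};[\mu_k/2,\mu_k])>0$. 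So either $\mu_k$ matches some scale $\lambda_{m,k}$, and the contradiction is immediate, or $\mu_k$ is separated from every $\lambda_{m,k}$. In the latter case the same strong local convergence shows that $\varepsilon_{M,k}(\mu_k\cdot)$ has a nonzero limit $\phi-c$ on $[1/2,1]$. Its weak $\dot H^1$ limit (after cutting off) is therefore nonzero, so $D[(\varepsilon_{M,n})]>0$, and by property 5 we get $\psi_{M+1}\neq 0$. This contradicts the choice of $M$. Hence $E(\varepsilon_{M,k};[0,1/4])\to 0$.

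\textbf{Step 2: the $H^1$ statement.} Strong convergence in $\dot H^1(B_{1/4})$ follows from the energy convergence together with Lemma \ref{lemma:closeness to l0 pi}. Small energy on $[0,1/4]$ forces $\varepsilon_{M,k}$ to be uniformly close to some $l_k\pi$, and then $\mathcal E(\varepsilon_{M,k}-l_k\pi)\lesssim E(\varepsilon_{M,k})\to0$. To identify the constant, look near $r=1/4$, i.e.\ at scale $1$. There $v(t_k)\to v(T)$ smoothly, and each $\psi_m(r/\lambda_{m,k})\to\psi_m(\infty)$ uniformly. The decomposition therefore forces $\varepsilon_{M,k}\to -\sum_m\psi_m(\infty)$, so $l_k\pi=-\sum_m\psi_m(\infty)$ eventually. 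The $L^2$ part then follows from the uniform smallness on the bounded domain.

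\textbf{Main obstacle.} The delicate step is Step 1: upgrading weak convergence to strong local convergence on the annulus, which is what makes the extracted energy at scale $\mu_k$ a genuine nonzero profile. This requires the Palais--Smale argument with careful handling of the cross terms between $\phi$ and the other profiles, which are controlled by scale separation.
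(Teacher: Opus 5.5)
Your route differs from the paper's in two technical places, and both substitutions are fine. First, you get strong $H^1([1/2,1])$ convergence by testing the Palais--Smale condition against $(v_k-\phi)\eta$ and subtracting $DE(\phi)[(v_k-\phi)\eta]=0$. The paper instead derives an $H^2$ bound on the annulus from the elliptic equation for $v(\mu_k\cdot,t_k)$, whose right-hand side $\mu_k^2v_t(\mu_k\cdot,t_k)$ tends to $0$ in $L^2$. Second, in Step 2 you pin the constant by uniform closeness to $l_k\pi$ and the value at $r=1/4$, where the paper uses the weak $H^1$ limit, the mean value and Poincar\'e.

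The genuine gap is the last inference of Step 1. The energy $E$ contains the term $\int\frac{D^2}{r^2}\sin^2(\cdot)\,r\,dr$. So a \emph{constant} $\kappa\notin\pi\mathbb Z$ has strictly positive energy on $[1/2,1]$, yet it is zero in $\dot H^1$. From strong convergence and $\liminf_k E(\varepsilon_{M,k};[\mu_k/2,\mu_k])>0$ you only get $E(\phi-c;[1/2,1])>0$. This does not make $\phi-c$ non-constant, so ``its weak $\dot H^1$ limit is therefore nonzero, hence $D[(\varepsilon_{M,n})]>0$'' does not follow.

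This is exactly the scenario the paper's proof is built to exclude. Maximality of $M$ only says the weak limit $R_\infty$ of $\varepsilon_{M,k}(\mu_k\cdot)$ is constant. Strong convergence then forces $E(R_\infty;[1/2,1])>0$, i.e.\ $R_\infty\notin\pi\mathbb Z$. Fatou's lemma finishes: $R_\infty$ has finite energy on $(0,\infty)$, whereas $\int_0^1\sin^2(R_\infty)\,dr/r=\infty$.

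The same unproved point sits in your Case 1. You assert that $v_k-\phi$ differs from $\varepsilon_{M,k}(\mu_k\cdot)$ only by the other-scale profiles. That presupposes $\phi-v(0,T)-\psi_m(\beta\,\cdot)$ is a multiple of $\pi$, where $\mu_k/\lambda_{m,k}\to\beta$. Weak convergence only tells you its gradient vanishes.

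You can close the gap with ingredients you already have. Since $\phi$ is a finite-energy harmonic map, there are two possibilities. Either $\phi$ is constant, and then $\phi\in\pi\mathbb Z$. Or $\phi=\tilde m\pi\pm2\arctan((\alpha r)^D)$, whose gradient never vanishes on $[1/2,1]$ and which is determined by its gradient up to an additive multiple of $\pi$. Moreover, every other constant that appears ($\lim_{r\to0}v(r,T)$, $\psi_m(0)$, $\psi_m(\infty)$) lies in $\pi\mathbb Z$. This is also what actually makes the other-scale profiles have vanishing $E$-energy on $[1/2,1]$; the cross-term remark you cite does not give that.

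Consequently the annular limit of $\varepsilon_{M,k}(\mu_k\cdot)$ is one of two things. If it is non-constant, you get a new nonzero profile, contradicting maximality. If it is a constant in $\pi\mathbb Z$, it has zero energy, contradicting the choice of $\mu_k$. State this dichotomy explicitly to complete your argument. Alternatively, import the paper's Fatou step, which also removes the need to split according to whether $\mu_k$ matches some $\lambda_{m,k}$.
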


\begin{proof}
If $\varepsilon_{M,k}$ does not converge to zero in energy norm, then one can extract a scale $\mu_k \lesssim  \sqrt{T-t_k}$ along which
\begin{align}
    \liminf_{k \to +\infty} E(\varepsilon_{M,k},[\mu_k/2,\mu_k]) = \liminf_{k \to +\infty}  E(R_k,[1/2,1]) > 0 \label{eq: weak limit R_k non constant}
\end{align}
by Lemma \ref{lemma:extraction of scale mu_k}, where $R_k = \varepsilon_{M,k}(\mu_k r)$. Up to taking a further subsequence, $R_k(r)$ has a weak $\dot{H}^1(\mathbb R^2)$-limit $R_{\infty}$. If $R_{\infty}$ is non-constant, then one would be able to extract a non-constant profile $\psi_{M+1}$, according to the criterion (\ref{eq: finiteness criterion decompo}), which would contradict the maximality of our profile decomposition. Hence, $R_{\infty}$ is constant.

We upgrade the weak $\dot{H}^1$ convergence of $R_k$ to a weak $H^{2}(B_1 \setminus B_{1/2})$-convergence, which implies strong $H^{1}(B_1 \setminus B_{1/2})$-convergence by Rellich-Kondrachov Theorem and uniform convergence thanks to the embeddings $H^{2}(B_1 \setminus B_{1/2}) \hookrightarrow C^{0,\alpha}(B_1 \setminus B_{1/2}) \subset \subset C^0(B_1 \setminus B_{1/2})$. Consider $V_k(r) = v(\mu_kr,t_k)$, which solves
$$
V_k''+\frac{V_k'}{r}- \frac{D^2}{2r^2}\sin(2V_k)=\mu_k^2 v_t(\mu_kr,t_k), \quad r \in [1/2,1].
$$
One has
$$
||\mu_k^2v_t(\mu_kr,t_k)||_{L^2([1/2,1],rdr)} \lesssim \sqrt{T-t_k}||v_t(t_k)||_{L^2([0,1/4],rdr)}  \to 0,
$$
and $V_k'$ is uniformly bounded in $L^2([1/2,1],rdr)$. Hence, $V_k''$ is uniformly bounded in $L^2([1/2,1],rdr)$, which leads to the weak $H^{2}(B_1 \setminus B_{1/2})$-convergence, up to passing to a subsequence. Then 
$$
R_k = V_k - W_k =v(\mu_k r,t_k) - \left[ v(\mu_k r,T)
+\sum_{m=1}^{M}\psi_m\left(\frac{\mu_k r}{\lambda_{m,k}}\right) \right]
$$
also converges $H^1$-strongly and uniformly on the annulus $B_1 \setminus B_{1/2}$. Then 
$$
  E(R_{\infty},[1/2,1]) = \lim_{k \to +\infty}  E(R_k,[1/2,1]) > 0 
  $$
implies that the weak limit $R_{\infty}$ of $R_k$ is not a multiple of $\pi$. Yet, by Fatou's Lemma, $R_{\infty}$ has finite energy on $[0,+\infty)$, meaning that it must be a multiple of $\pi$. We have found a contradiction.

 This finishes the proof that the remainder $\varepsilon_{M,k}(r)$ converges strongly to zero in energy norm. In fact, $\varepsilon_{M,k}(r)$ converges strongly to 
$$
-\sum_{m=1}^M \psi_m(\infty).
$$
in $H^1(B_{1/4}(0))$. Indeed, $\varepsilon_{M,k}$ is bounded in $H^1(B_{1/4}(0))$, hence it converges weakly up to taking a further subsequence. Moreover, $v(r,t_k)$ converges weakly in $H^1((0,1))$ to $v(r,T)$, while the rescaled profiles converge weakly to
$$
\psi_m\left( \frac{r}{\lambda_{m,k}} \right) \to \psi_m(\infty).
$$
Hence, the weak limit $H^1(B_{1/4}(0))$-limit of $\varepsilon_{M,k}(r)$ is $-\sum_{m=1}^M \psi_m(\infty)$. In particular,
$$
m_k = \frac{1}{|B_{1/4}(0)|} \int_{B_{1/4}(0)} \varepsilon_{M,k}(r)dr = \frac{1}{|B_{1/4}(0)|} \langle \varepsilon_{M,k}, 1 \rangle_{L^2(B_{1/4}(0))} \to -\sum_{m=1}^M \psi_m(\infty) 
$$
by weak $L^2$-convergence. The strong convergence then follows from Poincaré inequality. 
\end{proof}

As $M$ is fixed, we now omit the $M$ in the remainder's notation. We have proved that
$$
  ||\tilde{\varepsilon}_k ||_{H^1(B_{1/4}(0))} := ||v(r,t_k) - v(r,T) - \sum_{m=1}^M [\psi_{m} \left(\frac{r}{\lambda_{m,k}} \right) - \psi_{m}(\infty) ] ||_{H^1(B_{1/4}(0))} \to 0,
$$
as well as in energy norm.

Finally, under the normalization $v(0,t) = 0$, $t < T$, the decomposition for $v(r,t)$ yields a decomposition for $u(r,t) = r^{-D}v(r,t)$ after multiplying everything by $r^{-D}$,
\begin{equation}
     u(r,t_k) = r^{-D}\left[ v(r,T) + \sum_{m=1}^M [\psi_{m} \left(\frac{r}{\lambda_{m,k}} \right) - \psi_{m}(\infty) ]  + \tilde{\varepsilon}_{k}(r) \right]. \label{eq: decomposition u in R^2D+2}
\end{equation}

\begin{lemma}\label{lemma:equivalence of remainder convergence}
    Let $ \tilde{\varepsilon} \in H^1(B^{(2)}_{\delta}(0))$ with finite energy $E(\tilde{\varepsilon};B^{(2)}_{\delta}(0))  <  \eta(\varepsilon_0)$, where $\eta$ is as in Lemma \ref{lemma:closeness to l0 pi} applied with $\varepsilon_0 \leq \pi/2$, and assume $\tilde{\varepsilon}(0) = 0$. There exists $C(D,\delta)$ for which
    $$
||r^{-D} \tilde{\varepsilon}||_{H^1(B^{(2D+2)}_{\delta}(0))} \leq C(D,\delta) \left[|| \tilde{\varepsilon}||_{H^1(B^{(2)}_{\delta}(0))}+E(\tilde{\varepsilon};B^{(2)}_{\delta}(0))^{1/2} \right].
    $$
    Conversely, let $\varepsilon \in H^1(B^{(2D+2)}_{\delta}(0))$. There exists $C(D,\delta)$ for which
    $$
||r^D\varepsilon||_{H^1(B^{(2)}_{\delta}(0))} +\mathcal{E}(r^D\varepsilon;B_{\delta}^{(2)}(0))^{1/2} \leq C(D,\delta)|| \varepsilon||_{H^1(B^{(2D+2)}_{\delta}(0))}.
    $$
\end{lemma}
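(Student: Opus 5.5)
Both directions come down to writing the radial norms out explicitly and controlling the singular weights $r^{-2}$ near the origin. I will compare the two measures $r\,dr$ and $r^{2D+1}dr$, and use Hardy-type inequalities where these alone do not suffice.

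\emph{First inequality.} Set $u = r^{-D}\tilde\varepsilon$. The $H^1(B^{(2D+2)}_\delta)$ norm is, up to constants,
$$\int_0^\delta (u_r^2+u^2)\,r^{2D+1}dr .$$
Since $u_r = r^{-D}\tilde\varepsilon_r - D r^{-D-1}\tilde\varepsilon$, we get
$$u_r^2 r^{2D+1}\lesssim \tilde\varepsilon_r^2\, r + D^2 r^{-2}\tilde\varepsilon^2\, r$$
and $u^2 r^{2D+1} = \tilde\varepsilon^2 r$. The first and last terms are bounded by $\|\tilde\varepsilon\|_{H^1(B^{(2)}_\delta)}^2$. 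The only real term is the Hardy-type quantity $\int_0^\delta r^{-2}\tilde\varepsilon^2\, r\,dr$, which is the part of $\mathcal E(\tilde\varepsilon)$ beyond $\dot H^1$.

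To control it, I apply Lemma \ref{lemma:closeness to l0 pi} on $[0,\delta]$ (with $R_1=0$, so $R_2/R_1=\infty\ge R_0$). The smallness $E(\tilde\varepsilon;B_\delta)<\eta(\varepsilon_0)$ yields an integer $l_0$ with $|\tilde\varepsilon-l_0\pi|<\varepsilon_0$ on $[0,\delta]$. Because $\tilde\varepsilon(0)=0$ and $\varepsilon_0\le\pi/2<\pi$, this forces $l_0=0$. The second part of that lemma then gives $\mathcal E(\tilde\varepsilon;[0,\delta])\le 2E(\tilde\varepsilon;[0,\delta])$. This controls $\int r^{-2}\tilde\varepsilon^2 r\,dr$ by $E(\tilde\varepsilon)$, and taking square roots finishes this direction.

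The main obstacle is justifying the pointwise value $\tilde\varepsilon(0)$ and the application of the lemma at $R_1=0$. Finite energy together with $H^1$ radial gives continuity on $(0,\delta]$, and the smallness of energy on dyadic annuli makes $\tilde\varepsilon$ close to a fixed multiple of $\pi$ down to $0$, as in the proof of Lemma \ref{lemma:extraction of scale mu_k}. So the limit at $0$ exists, and $\tilde\varepsilon(0)=0$ is meaningful. If needed, I apply the lemma on $[R_1,\delta]$ and let $R_1\to0$ by monotone convergence.

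\emph{Second inequality.} Write $w=r^D\varepsilon$. Then $w_r = r^D\varepsilon_r + D r^{D-1}\varepsilon$, so
$$\mathcal E(w) \lesssim \int_0^\delta \left(\varepsilon_r^2 r^{2D+1} + \varepsilon^2 r^{2D-1}\right)dr .$$
Also $\int w^2 r\,dr = \int \varepsilon^2 r^{2D+1}dr$ and $\int w_r^2 r\,dr\lesssim \mathcal E(w)$. The remaining term $\int\varepsilon^2 r^{2D-1}dr$ is the Hardy term $\|\varepsilon/|y|\|^2_{L^2(B^{(2D+2)}_\delta)}$. Since the dimension is $2D+2\ge 3$, the Hardy inequality on a ball (after a cutoff, or via the one-dimensional integration by parts $\int_0^\delta \varepsilon^2 r^{2D-1}dr = [\varepsilon^2 r^{2D}/(2D)]_0^\delta - \tfrac1D\int \varepsilon\varepsilon_r r^{2D}dr$) bounds it by $C(D,\delta)\|\varepsilon\|^2_{H^1}$. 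In this integration by parts, the boundary term at $\delta$ is controlled by the trace, i.e. by the radial $H^1$ norm. The boundary term at $0$ vanishes by the density of smooth functions, and Cauchy–Schwarz absorbs the cross term. Combining the estimates gives the claim.
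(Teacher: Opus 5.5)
Your proposal is correct and essentially follows the paper's proof. In the first direction you expand the radial norms, reduce to the Hardy term $\int_0^\delta \tilde\varepsilon^2 r^{-1}\,dr$, and control it through Lemma~\ref{lemma:closeness to l0 pi} with $l_0=0$. In the second direction you reduce to the $(2D+2)$-dimensional Hardy term $\int_0^\delta \varepsilon^2 r^{2D-1}\,dr$. The only difference is cosmetic: besides the paper's cutoff argument (Hardy on $\dot H^1_0$ near the origin, the $L^2$ norm near $r\sim\delta$), you offer a one-dimensional integration by parts with a trace term at $r=\delta$.
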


\begin{proof}
    Consider first $\tilde{\varepsilon}$. The $L^2$-norms for $\tilde{\varepsilon}$ and $r^{-D}\tilde{\varepsilon}$ are equivalent. As for the $\dot{H}^1(B_{\delta}(0) \subset \mathbb R^{2D+2})$-norm, one checks that
\begin{align*}
    ||r^{-D} \tilde{\varepsilon}(r)||_{\dot{H}^1(B_{\delta}^{(2D+2)}(0))}^2 &\sim \int_0^{\delta}|\tilde{\varepsilon}'(r) - \frac{D}{r}\tilde{\varepsilon}(r)|^2rdr \\
    &\leq 2||\tilde{\varepsilon}||_{\dot{H}^1(B^{(2)}_{\delta}(0))}^2  + 2 \int_0^{\delta}\left| \frac{D}{r}\tilde{\varepsilon}(r)\right|^2rdr.
\end{align*}
Apply Lemma \ref{lemma:closeness to l0 pi}, where one must have $l_0 = 0$ (the finite energy assumption implies that $\tilde{\varepsilon}(r)$ is continuous at zero with $\tilde{\varepsilon}(0) = 0$).

Conversely, consider $\varepsilon$. Again, the main difficulty concerns only the $\dot{H}^1$-norm. One checks that
\begin{align*}
    ||r^{D} \varepsilon(r)||_{\dot{H}^1(B_{\delta}^{(2)}(0))}^2 &\sim  \int_0^{\delta}|\varepsilon'(r) + \frac{D}{r}\varepsilon(r)|^2r^{2D+1}dr \\
    &\leq 2||\varepsilon||_{\dot{H}^1(B_{\delta}^{(2D+2)}(0))}^2  + 2\int_0^{\delta}\left| \frac{D}{r}\varepsilon(r)\right|^2r^{2D+1}dr.
\end{align*}
Split the integral into two parts by multiplying the inside with a smooth cut-off $\chi_{\delta/2}$. The part near the origin $r \lesssim \delta$ is bounded using the $(2D+2)$-dimensional Hardy's inequality (\ref{eq: rellich inequ}) and the part near the boundary $r \sim \delta$ is bounded using the $L^2$-norm of $\varepsilon$. The same can be done with $\mathcal{E}(r^D \varepsilon;[0,\delta])$.
\end{proof}

\begin{proposition}\label{prop:strong convergence of tilde epsilon}
The remainder of the decomposition (\ref{eq: decomposition u in R^2D+2}) also converges strongly to zero, in the sense that $\tilde{\varepsilon}_k(0) = 0$ for all $k$ and
$$
 ||r^{-D}\tilde{\varepsilon}_k||_{H^1(B^{(2D+2)}_{1/4}(0))} \to 0.
$$ 
\end{proposition}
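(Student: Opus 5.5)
The plan is to reduce the statement to the first half of Lemma \ref{lemma:equivalence of remainder convergence}, applied with $\delta = 1/4$ and $\tilde{\varepsilon} = \tilde{\varepsilon}_k$. That lemma needs three inputs: smallness of $E(\tilde{\varepsilon}_k;B_{1/4}^{(2)}(0))$ below $\eta(\varepsilon_0)$, the normalization $\tilde{\varepsilon}_k(0) = 0$, and the convergence $\|\tilde{\varepsilon}_k\|_{H^1(B^{(2)}_{1/4}(0))} \to 0$. Given these, the lemma yields
$$
\|r^{-D}\tilde{\varepsilon}_k\|_{H^1(B^{(2D+2)}_{1/4}(0))} \leq C(D,1/4)\left[\|\tilde{\varepsilon}_k\|_{H^1(B^{(2)}_{1/4}(0))} + E(\tilde{\varepsilon}_k;B^{(2)}_{1/4}(0))^{1/2}\right] \to 0.
$$
The last input is exactly the conclusion recorded after Proposition \ref{prop:strong convergence of remainder}, which gives convergence of $\tilde{\varepsilon}_k$ both in $H^1(B_{1/4}(0))$ and in energy norm. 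That energy convergence also supplies the smallness condition for all $k$ large.

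For the normalization $\tilde{\varepsilon}_k(0)=0$, I would evaluate each term at $r=0$. With $v(0,t)=0$ for $t<T$, we have $v(0,t_k)=0$. The function $v(\cdot,T)$ has finite energy, so it extends continuously at $0$ with $v(0,T) \in \pi\mathbb Z$. Each profile has the form $\tilde m\pi \pm 2\arctan((\alpha r)^D)$, so $\psi_m(0/\lambda_{m,k}) - \psi_m(\infty) = \mp\pi$ is a multiple of $\pi$. Hence $\tilde{\varepsilon}_k(0) \in \pi\mathbb Z$.

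On the other hand, $\tilde{\varepsilon}_k$ has small energy on $[0,1/4]$. Lemma \ref{lemma:closeness to l0 pi}, applied on $[0,1/4]$ where the ratio $R_2/R_1$ is infinite, then gives a unique $l_0$ with $|\tilde{\varepsilon}_k - l_0\pi| < \varepsilon_0$ on $[0,1/4]$. The $L^2(B_{1/4})$-smallness of $\tilde{\varepsilon}_k$ forces $l_0 = 0$ for $k$ large, since otherwise $\|\tilde{\varepsilon}_k\|_{L^2} \gtrsim \pi/2$. Combining this with $\tilde{\varepsilon}_k(0) \in \pi\mathbb Z$ and $\varepsilon_0 < \pi$ gives $\tilde{\varepsilon}_k(0)=0$.

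Strictly, this holds for $k$ large. For the finitely many remaining $k$, I would discard them by relabeling the subsequence. Alternatively, one can absorb the integer constant $\tilde{\varepsilon}_k(0)$ into the normalization of $v(r,T)$ or of the $\tilde m$ in the profiles.

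The main obstacle I expect is making the value at $0$ rigorous, since $\tilde{\varepsilon}_k$ is only in $H^1$ and continuity at the origin in two dimensions is not automatic. The way around this is to use finite energy: the term $\int r^{-1}\sin^2$ together with Lemma \ref{lemma:closeness to l0 pi} on the dyadic intervals $[2^{-j-1},2^{-j}]/4$ shows that $\tilde{\varepsilon}_k$ stays within $\varepsilon_0$ of one fixed multiple of $\pi$ near $0$. The limit at $0$ then exists because the energy on $[0,\rho]$ tends to $0$ as $\rho \to 0$, which forces the oscillation on $[0,\rho]$ to $0$. Once this is settled, the rest is a direct application of the two earlier results.
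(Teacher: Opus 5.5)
Your proposal is correct. It has the same skeleton as the paper's proof: everything reduces to the first half of Lemma \ref{lemma:equivalence of remainder convergence}, and the only real work is showing $\tilde{\varepsilon}_k(0)=0$. You differ from the paper in how that step is done.

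The paper first notes that, since $v(0,t_k)=0$, the value $\tilde{\varepsilon}_k(0) = -v(0,T)-\sum_{m}[\psi_m(0)-\psi_m(\infty)] = c\pi$ does not depend on $k$. It then rules out $c\neq 0$ with a direct Bogomolny-type bound. Choose $r_0$ with $\tilde{\varepsilon}_k(r_0)\to 0$, using $H^1_{loc}$ convergence away from the origin. The energy on $[0,r_0]$ is then at least $D\,|\int_{\tilde{\varepsilon}_k(r_0)}^{c\pi}|\sin s|\,ds|$, which tends to $D\int_0^{|c|\pi}|\sin s|\,ds>0$. This contradicts energy convergence.

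You instead apply Lemma \ref{lemma:closeness to l0 pi} on all of $[0,1/4]$. This is legitimate, since $R_1=0$ is allowed there, and the paper does the same inside Lemma \ref{lemma:equivalence of remainder convergence}. That traps $\tilde{\varepsilon}_k$ near a single $l_0\pi$. You then use the $L^2(B_{1/4})$ convergence recorded after Proposition \ref{prop:strong convergence of remainder} to force $l_0=0$.

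The Bogomolny inequality sits inside that lemma, so both arguments rest on the same mechanism. Yours is a little shorter. The paper's makes the energy cost of a nonzero integer at the origin explicit and needs convergence at only one radius.

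One simplification you missed: your own evaluation at $r=0$ already shows that $\tilde{\varepsilon}_k(0)$ is independent of $k$. So your large-$k$ conclusion gives $\tilde{\varepsilon}_k(0)=0$ for every $k$, and you never need to discard indices. Your alternative of absorbing the constant into the normalization of $v(\cdot,T)$ or of $\tilde m$ would not work anyway, since changing $\tilde m$ leaves $\psi_m(0)-\psi_m(\infty)$ unchanged. It is simply unnecessary.
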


\begin{proof}
The proposition is a consequence of Lemma \ref{lemma:equivalence of remainder convergence} if one can prove $\tilde{\varepsilon}_k(0) = 0$. Note that $\tilde{\varepsilon}_k(r)$ has finite energy, hence $\tilde{\varepsilon}_k(0) = c_k \pi, c_k \in \mathbb Z$. Since $v(0,t_k) = 0$, evaluating the decomposition at $r = 0$ shows that $c_k = c$ is constant. Assume $c \neq 0$ for a contradiction. As $\tilde{\varepsilon}_k \to 0$ in $H^1_{loc}((0,1/4))$, it converges uniformly to zero away from the origin. Pick $1/4 > r_0 > 0$ such that $\tilde{\varepsilon}_k(r_0) \to 0$. As $a^2 + b^2 \geq 2|ab|$ and 
$$
\partial_r \int_0^{\tilde{\varepsilon}_k(r)}|\sin(s)|ds = \tilde{\varepsilon}_k' |\sin(\tilde{\varepsilon}_k)|,
$$
we find that
\begin{align*}
    \int_0^{r_0}r(\tilde{\varepsilon}_k')^2 + \frac{D^2}{r}\sin(\tilde{\varepsilon}_k)^2dr &\geq D\int_0^{r_0} |\tilde{\varepsilon}_k'\sin(\tilde{\varepsilon}_k)|dr \\
    &\geq D \left| \int_0^{r_0} \tilde{\varepsilon}_k'|\sin(\tilde{\varepsilon}_k)| dr \right|  \\
    &= D\left| \int_{\tilde{\varepsilon}_k(r_0)}^{\tilde{\varepsilon}_k(0)}|\sin(s)|ds \right| \\
    &\to D\left|  \int_0^{c \pi}|\sin(s)|ds  \right| > 0,
\end{align*}
which contradicts the fact that $\tilde{\varepsilon}_k$ converges to zero in energy norm. 
\end{proof}

\begin{corollary}[On the value of $v(0,T)$]
In particular, evaluating the decomposition (\ref{eq: decomposition u in R^2D+2}) at $r = 0$ yields 
$$
0 = v(0,T) + \sum_{m=1}^M [\psi_m(0) - \psi_m(\infty)].
$$
\end{corollary}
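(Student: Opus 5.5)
The plan is to evaluate the decomposition (\ref{eq: decomposition u in R^2D+2}) at $t=t_k$ and $r=0$, using the normalization $v(0,t_k)=0$ together with Proposition \ref{prop:strong convergence of tilde epsilon}, which gives $\tilde{\varepsilon}_k(0)=0$ for every $k$. Before the $r^{-D}$ factor is applied, the identity behind the decomposition reads
\begin{equation*}
v(r,t_k) = v(r,T) + \sum_{m=1}^M \left[\psi_m\left(\frac{r}{\lambda_{m,k}}\right) - \psi_m(\infty)\right] + \tilde{\varepsilon}_k(r), \quad r \in (0,1/4).
\end{equation*}
I will show that every term has a limit as $r \to 0^+$ and then pass to that limit at fixed $k$.

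\emph{Existence of the limits.} The left side tends to $v(0,t_k)=0$ because $v(\cdot,t_k)$ is smooth at the origin for $t_k<T$. For the profiles, (\ref{all possible form for limiting H}) shows that each $\psi_m$ is continuous on $[0,\infty)$, with $\psi_m(0)=\tilde m\pi$ and a finite limit $\psi_m(\infty)$. Since $\lambda_{m,k}>0$ is fixed, $\psi_m(r/\lambda_{m,k}) \to \psi_m(0)$ as $r \to 0$.

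\emph{The terms $v(\cdot,T)$ and $\tilde{\varepsilon}_k$.} Both have finite energy near the origin. For $v(\cdot,T)$ this holds by Fatou's lemma, and for $\tilde{\varepsilon}_k$ because its energy tends to $0$. Finite energy makes such a function continuous at $0$ with value in $\pi\mathbb Z$, by the argument of Lemma \ref{lemma:closeness to l0 pi}, equivalently the bound $\int |v_r \sin v|\,dr \lesssim E$ used in Proposition \ref{prop:strong convergence of tilde epsilon}. So $v(0,T)$ is well defined, and $\tilde{\varepsilon}_k(0)=0$ by that proposition.

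Letting $r\to 0^+$ in the identity therefore gives
\begin{equation*}
0 = v(0,T) + \sum_{m=1}^M [\psi_m(0)-\psi_m(\infty)] + 0,
\end{equation*}
which is the claim. The only delicate point is the existence of the limit of $v(r,T)$ at $r=0$. That limit follows from finiteness of the energy of $v(T)$ via the bound on $\int_0^{r_0}|\partial_r v(T)\sin v(T)|\,dr$, which excludes oscillation between distinct multiples of $\pi$. Everything else is bookkeeping.
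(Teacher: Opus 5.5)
Your proposal is correct and follows the paper's route. Like the paper, you evaluate the $v$-level identity behind (\ref{eq: decomposition u in R^2D+2}) at $r=0$, using the normalization $v(0,t_k)=0$ and $\tilde{\varepsilon}_k(0)=0$ from Proposition \ref{prop:strong convergence of tilde epsilon}; your justification that each term has a limit at the origin (finite energy forcing continuity with values in $\pi\mathbb Z$) only spells out what the paper uses implicitly.
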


The work of Samuelian (\cite{samuelian2025blowuptrees}) shows that there can be at most one non-constant profile in the decomposition for $v$ and $u$, i.e. $\psi_M = 0$ for $M \geq 2$. Using the normalization $v(0,t) = 0$, 
$t < T$, and up to a change of sign, one obtains $v(0,T) = \pi$ and $\psi_1(r) = 2\arctan(r^D)$, $\psi(\infty) = \pi$ up to replacing $\lambda_{1,k}$ by $\lambda_{1,k}/\alpha$ for some $\alpha > 0$. Excluding the existence of bubbles shows global existence. Indeed, if $M = 0$, 
$$
0 = \tilde{\varepsilon}_k(0) = v(0,t_k) - v(0,T) = 0 - v(0,T) 
$$
by Proposition \ref{prop:strong convergence of tilde epsilon}, which forces $v(0,T) = 0$  and prevents blow-up at $T < +\infty$.

Using the single bubble result, we will write
\begin{equation}
     u(r,t_k) = \underbrace{r^{-D}[ v(r,T) - \pi]}_{:= u_*(r)} + 2r^{-D}\arctan\left( \frac{r^D}{\lambda_k^D}\right)  + r^{-D}\tilde{\varepsilon}_k(r). \label{eq: decomposition for u in R^2D+1, single bubble case}
\end{equation}

We finish this section with some other energy facts.

\begin{lemma}[Limit of energy exists]
    The limit
    \begin{align}
       \lim \limits_{t \to T}E(v(t,\cdot);B_{\alpha \sqrt{T-t}}(0)) = E(Q), \quad \alpha \in (0,1/2], \label{eq: energy of bubble is recovered on 0 < r < sqrt T - t}
\end{align}
exists.
\end{lemma}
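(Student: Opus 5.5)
The plan is to show first that the limit holds along the sequence $t_k$, and then to pass from the sequence to all $t\to T$ using the localized energy inequality (\ref{eq: better estimate for no concentration at self similar scale}).

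\textbf{Step 1: the limit along $t_k$.} By Proposition \ref{prop:strong convergence of remainder} and the single-bubble decomposition, on $B_{1/4}(0)$ we can write
$$
v(t_k)=v(T)+\psi_1(\cdot/\lambda_k)-\pi+\tilde\varepsilon_k,
$$
with $E(\tilde\varepsilon_k;[0,1/4])\to0$. We then use the triangle inequality for $E^{1/2}$ on the ball $B_{\alpha\sqrt{T-t_k}}$, together with three facts:
\begin{itemize}
\item $E(v(T);B_{\alpha\sqrt{T-t_k}})\to0$, by absolute continuity of the integral;
\item $E(\tilde\varepsilon_k)\to 0$;
\item $E(\psi_1(\cdot/\lambda_k);B_{\alpha\sqrt{T-t_k}})=E(Q;B_{\alpha\sqrt{T-t_k}/\lambda_k})\to E(Q)$, since $\lambda_k\ll\sqrt{T-t_k}$ by Corollary \ref{cor: lambda_k ll sqrt T-t_k}.
\end{itemize}
Note that $E$ only sees $\sin^2$, so the shift by $\pi$ is harmless. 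This gives $E(v(t_k);B_{\alpha\sqrt{T-t_k}})\to E(Q)$. Combined with (\ref{eq: no concentration at self-similar scale}), it also gives $E(v(t_k);B_{1/2})\to E(v(T);B_{1/2})+E(Q)$.

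\textbf{Step 2: passing to all times.} Set $F(t):=E(v(t);B_{\alpha\sqrt{T-t}})$. From (\ref{eq: no concentration at self-similar scale}) and the triangle inequality,
$$
E(v(t);B_{1/2}\setminus B_{\alpha\sqrt{T-t}})\to E(v(T);B_{1/2}),
$$
because $E(v(T);B_{\alpha\sqrt{T-t}})\to0$. It therefore suffices to show that
$$
G(t):=E(v(t);B_{1/2})
$$
has a limit as $t\to T$ (this is the form I would state for a cut-off version). Then $F(t)=G(t)-E(v(T);B_{1/2})+o(1)$ converges, and by Step 1 its limit is $E(Q)$.

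\textbf{Step 3: existence of $\lim_{t\to T}G(t)$ (main obstacle).} The energy is not monotone, so I would work with the cut-off energy $\tilde G(t)=\frac12\int|\nabla\Phi|^2\chi^2$, with $\chi=1$ on $B_{1/4}$ and supported in $B_{1/2}$. By (\ref{eq:equality involving v_t}),
$$
|\tilde G(t_2)-\tilde G(t_1)|\lesssim \int_{t_1}^{t_2}\!\!\int_0^{1/2}|v_t|^2\,r\,dr\,dt+\sqrt{t_2-t_1}\Bigl(\int_{t_1}^{t_2}\!\!\int_0^{1/2}|v_t|^2\,r\,dr\,dt\Bigr)^{1/2}.
$$
By (\ref{eq: finite L^2-norm of v_t}) the right-hand side tends to $0$ as $t_1,t_2\to T$, so $\tilde G$ is Cauchy and its limit exists.

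On the annulus $1/4\le r\le1/2$, $v$ is smooth up to time $T$, so the difference between $\tilde G$ and the corresponding sharp-cutoff energy converges. Replacing $B_{1/2}$ by $B_{1/4}$ throughout Step 2 therefore gives the claim. The only delicate point is this Cauchy estimate, and it needs precisely the facts that $v_t\in L^2$ locally in space-time and that the energy is bounded.
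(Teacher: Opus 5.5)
Your proof is correct. Step 1 coincides with the paper's, but Steps 2--3 take a different route.

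The paper never shows that an energy on a fixed ball converges. It applies (\ref{eq: better estimate for no concentration at self similar scale}) directly with the self-similar cutoff $[a,b]=[0,\alpha\sqrt{T-t_1}]$, and compares an arbitrary time $t_1$ with the good times $t_k$ in both orders. The cross term then carries the factor $b^{-1}\sqrt{|t_2-t_1|}\le\alpha^{-1}$, which stays bounded only because the cutoff lives at scale $\sqrt{T-t_1}$. This gives $\liminf_{t\to T}E(v(t);B_{2\alpha\sqrt{T-t}})\ge E(Q)$ and $\limsup_{t\to T}E(v(t);B_{\alpha\sqrt{T-t}})\le E(Q)$. The radius doubling $[a,b]\mapsto[a/2,2b]$ in that inequality is what forces the restriction $\alpha\le 1/2$.

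You instead take a fixed cutoff, so the cross term carries $\|\nabla\chi\|_{\infty}\sqrt{t_2-t_1}\to 0$. This makes $E(v(t);B_{1/4})$ Cauchy as $t\to T$, using (\ref{eq: finite L^2-norm of v_t}) and the smoothness of $v$ on $1/4\le r\le 1/2$ up to time $T$. You then remove the region $B_{1/4}\setminus B_{\alpha\sqrt{T-t}}$ using the all-times no-concentration statement (\ref{eq: no concentration at self-similar scale}) and the reverse triangle inequality for $E^{1/2}$.

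Your argument is more modular and gives the limit for every $\alpha\in(0,1]$, with no loss in the radius. The paper's argument is a single two-sided comparison at the self-similar scale, needing only the sequential limit along $t_k$ and the localized energy inequality. Its price is halving the admissible range of $\alpha$.
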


\begin{proof}
  From the profile decomposition, Proposition \ref{prop:strong convergence of remainder} and the triangle inequality for the energy, one deduces that
  $$
   \lim \limits_{k \to +\infty}E(v(t_k,\cdot);B_{\alpha \sqrt{T-t_k}}(0))^{1/2} = E(Q)^{1/2}, \quad \alpha \in (0,1],
   $$
   exists. Moreover,
     $$
   \lim \limits_{k \to +\infty}\left|E(v(t_k,\cdot);B_{\alpha \sqrt{T-t}}(0))^{1/2} - E(Q)^{1/2}\right| \leq E(v(T);B_{\alpha \sqrt{T-t}}(0))^{1/2}=  o_{t \to T}(1).
   $$
   
   Apply (\ref{eq: better estimate for no concentration at self similar scale}) with fixed $t_1 \in (0,T)$, $t_2 \to T$ along the sequence of times $t_k$ from the profile decomposition, $[a,b] = [0,\alpha \sqrt{T-t_1}]$ and $\chi(r) \in C^{\infty}_c([0,2b])$, $\chi(r) = 1$ on $[a,b]$, $|\chi_r(r)| \lesssim (T-t_1)^{-1/2}$. It yields that
\begin{align*}
    o_{t_1 \to T}(1) + E(Q) &- E(v(t_1,\cdot); B_{2\alpha \sqrt{T-t_1}}(0))\\
    &\lesssim \int_{t_1}^{T} \int_{0}^{1/2} |v_t|^2rdrdt + \left( \int_{t_1}^T \int_{0}^{1/2} |v_t|^2rdrdt \right)^{1/2}.
\end{align*}
Take the limit as $t_1 \to T$ to conclude that 
\begin{equation*}
E(Q) - \liminf \limits_{t \to T} E(v(t,\cdot); B_{2\alpha \sqrt{T-t}}(0)) \leq 0, \quad \alpha \in (0,1].
\end{equation*}
Exchanging the roles of $t_1$ and $t_2$ yields 
\begin{equation*}
\limsup \limits_{t \to T} E(v(t,\cdot); B_{\alpha \sqrt{T-t}}(0)) - E(Q) \leq 0, \quad \alpha \in (0,1/2].
\end{equation*}
As $\alpha$ is arbitrary, this concludes the proof.
\end{proof}

\begin{lemma}[Pythagorean Decomposition of Energy]\label{lemma: failure of convergence of v(t) to v(T) in norm}
    For any $0 < \eta < 1/4$,
    \begin{equation}
   \lim_{t \to T} ||v(t,\cdot)-v(T,\cdot)||_{\dot{H}^1(B_{\eta }(0))}^2  = \lim_{t \to T} ||v(t,\cdot)-v(T,\cdot)||_{H^1(B_{\eta }(0))}^2 = E(Q)\label{eq: existence limit of energy at time T}
\end{equation}
    exists.
\end{lemma}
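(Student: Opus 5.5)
The plan is to split $B_\eta(0)$ at the self-similar radius $\alpha\sqrt{T-t}$ and treat the inner ball and the outer annulus separately. On the annulus $B_\eta(0)\setminus B_{\alpha\sqrt{T-t}}(0)$, the Proposition on no concentration at the self-similar scale, (\ref{eq: no concentration at self-similar scale}), already gives
$$
\|v(t)-v(T)\|_{H^1(B_{\eta}(0)\setminus B_{\alpha\sqrt{T-t}}(0))}\to 0
$$
for every $\alpha\in(0,1]$. It therefore suffices to prove that
$$
\lim_{t\to T}\|v(t)-v(T)\|^2_{\dot H^1(B_{\alpha\sqrt{T-t}}(0))}=E(Q)
$$
for some fixed $\alpha\in(0,1/2]$, and that the $L^2$ part vanishes. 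The $L^2$ part is easy: $v$ is bounded by the comparison principle, and the ball shrinks, so $\|v(t)-v(T)\|_{L^2(B_{\alpha\sqrt{T-t}})}\to 0$. Combining the annulus with the inner ball gives both equalities in (\ref{eq: existence limit of energy at time T}).

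On the inner ball I will use the triangle inequality for the $\dot H^1$ seminorm:
$$
\Bigl|\,\|v(t)-v(T)\|_{\dot H^1(B_{\alpha\sqrt{T-t}})}-\|v(t)\|_{\dot H^1(B_{\alpha\sqrt{T-t}})}\Bigr|\le \|v(T)\|_{\dot H^1(B_{\alpha\sqrt{T-t}})}=o_{t\to T}(1),
$$
where the last step is absolute continuity of the integral for the finite-energy map $v(T)$. This reduces the claim to showing $\|v(t)\|^2_{\dot H^1(B_{\alpha\sqrt{T-t}})}\to E(Q)$, where $E(Q)$ is normalised as in the paper so that $\|Q\|_{\dot H^1(\mathbb R^2)}^2=E(Q)$, i.e.\ the same factor $\pi$ appears in both. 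We know from (\ref{eq: energy of bubble is recovered on 0 < r < sqrt T - t}) that the full energy $E(v(t);B_{\alpha\sqrt{T-t}})$, which contains both the Dirichlet term and the potential term $\frac{D^2}{r^2}\sin^2 v$, converges to $E(Q)$. So the real content is to separate these two terms and show that the Dirichlet part alone tends to $\frac12 E(Q)$ in the paper's normalisation, since for $Q$ the equipartition $(r\psi_r)^2=D^2\sin^2\psi$ (the $C=0$ identity from the finiteness proof) splits $E(Q)$ evenly.

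The main obstacle is this splitting along \emph{all} times $t\to T$, not just along $t_k$. Along the sequence $t_k$ it follows from the decomposition (\ref{eq: decomposition for u in R^2D+1, single bubble case}) and Proposition \ref{prop:strong convergence of remainder}: the bubble profile carries exact equipartition, and the remainder and $v(T)$ contribute $o(1)$ in energy on the small ball. For general $t$, I will try a localised Pohozaev/virial identity. I would multiply the equation by $r v_r\chi_{\alpha\sqrt{T-t}}$ and integrate. This yields
$$
\int\Bigl(v_r^2-\frac{D^2}{r^2}\sin^2 v\Bigr)\chi\, r\,dr
$$
up to two kinds of error. The first kind consists of terms supported on $r\sim\alpha\sqrt{T-t}$, which vanish by (\ref{eq: no concentration at self-similar scale}) together with the smallness of $v(T)$ there. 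The second kind is $\int v_t\, r v_r\,\chi\, r\,dr$, which is bounded by $\sqrt{T-t}\,\|v_t(t)\|_{L^2(B_{1/2})}\,E^{1/2}$. The difficulty is that this last bound tends to zero only along good times, such as those chosen in (\ref{eq: well-chosen sequence of times}).

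If the virial approach does not give pointwise-in-time control, the fallback is to avoid splitting altogether. I would interpret the stated equality as the limit of the energy of $v(t)-v(T)$ and combine the annulus estimate with (\ref{eq: energy of bubble is recovered on 0 < r < sqrt T - t}) via the reverse triangle inequality for $E^{1/2}$. This gives $\lim_{t\to T}E(v(t)-v(T);B_\eta)=E(Q)$ directly, which is the version that is robust for all $t$.
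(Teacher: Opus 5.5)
Your reductions are sound: the annulus via (\ref{eq: no concentration at self-similar scale}), discarding $v(T)$ on the inner ball, and the $L^2$ part. You also locate the real difficulty correctly. Write $A(t)=\int_0^{\sigma(t)}v_r^2\,r\,dr$ for the Dirichlet part and $B(t)=\int_0^{\sigma(t)}\frac{D^2}{r^2}\sin^2 v\,r\,dr$ for the potential part on a small ball; yours has $\sigma(t)=\alpha\sqrt{T-t}$. Then (\ref{eq: energy of bubble is recovered on 0 < r < sqrt T - t}) only gives $A+B\to 4D$, whereas you need $A\to 2D$ for \emph{every} $t\to T$, not just along $t_k$.

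Neither of your routes establishes this. The virial identity fails for the reason you give: its error $\int v_t\,rv_r\chi\,r\,dr$ is only $O(\sqrt{T-t}\,\|v_t(t)\|_{L^2})$. Since you only know $v_t\in L^2_tL^2_x$, that is small along a sequence, not for all $t$.

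The fallback proves $\lim_{t\to T}E(v(t)-v(T);B_\eta)=E(Q)$. This concerns only the sum of the Dirichlet and potential terms of $w=v(t)-v(T)$ and does not determine $\|w\|^2_{\dot H^1(B_\eta)}$. So it replaces the lemma by a weaker statement rather than proving it. The difference matters: Lemma~\ref{lemma:collision interval length estimate} and Proposition~\ref{prop: lambda(t) v_t estimate on c_n d_n} use exactly the convergence of the Dirichlet norms to upgrade weak to strong $L^2$ convergence of $\nabla V_n$.

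The missing ingredient is a static Bogomol'nyi (topological) lower bound. It uses no information on $v_t$, so it holds at every time. By Cauchy--Schwarz,
\[
\sqrt{A(t)B(t)}\ \ge\ D\Bigl|\int_0^{\sigma}v_r\sin v\,dr\Bigr| = D\,\bigl|\cos v(t,0)-\cos v(t,\sigma)\bigr| .
\]
Since $v(t,0)=0$, once you know $v(t,\sigma(t))\to\pi$ this gives $2\sqrt{AB}\ge 4D+o(1)$. Together with $2\sqrt{AB}\le A+B\to 4D$, you get $(\sqrt{A}-\sqrt{B})^2=A+B-2\sqrt{AB}\to 0$. Hence $A,B\to 2D$ and $\|v(t)\|^2_{\dot H^1(B_{\sigma(t)})}=2\pi A\to 4\pi D=E(Q)$.

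The paper secures the boundary value by taking $\sigma(t)=e^{-L(t)}$ with $\sqrt{T-t}\ll\sigma(t)\ll 1$, and $L(t)\to\infty$ so slowly that $L(t)\|w\|^2_{H^1(B_\delta\setminus B_{\alpha\sqrt{T-t}})}\to 0$. Then $|w(t,\sigma)-w(t,\delta)|\le(\int_\sigma^\delta w_r^2\,r\,dr)^{1/2}\sqrt{L(t)}\to 0$. Combined with $w(t,\delta)\to 0$ and $v(T,\sigma(t))\to\pi$, this gives $v(t,\sigma(t))\to\pi$.

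At your radius $\alpha\sqrt{T-t}$ you could instead obtain $v(t,\alpha\sqrt{T-t})\to\pi$ from Lemma~\ref{lemma:closeness to l0 pi}. Apply it on $[\alpha\sqrt{T-t},\varepsilon]$, where $E(v(t))$ is small, and identify the multiple of $\pi$ from $v(t,\varepsilon)\approx v(T,\varepsilon)\approx\pi$. Some such step is required. The remaining pieces, $v(T)$ on the inner ball and $w$ on the annulus, are then negligible exactly as in your reductions.
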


\begin{proof}
It suffices to show that the limit exists. The value of the limit is obtained using the Pythagorean decomposition of the energy (\ref{eq:pythagorean decompo energy profile}) (see also Remark \ref{rmk: pythagorean decomposition on any ball}) and the strong convergence of the remainder (Proposition \ref{prop:strong convergence of remainder}).

Fix $\delta = 1/4$. Recall (\ref{eq: no concentration at self-similar scale}). Let $\sqrt{T-t} \ll \sigma(t) \ll 1$ be a scale defined as $\sigma(t) = e^{-L(t)}$, where $L(t)$ grows slowly enough so that
$$
\lim \limits_{t \to T} L(t) = +\infty, \quad \lim \limits_{t \to T} L(t)||v(t,\cdot)-v(T,\cdot)||_{H^1(B_{\delta}(0) \setminus B_{\alpha \sqrt{T-t}}(0))}^2 = 0, \quad L(t) \ll |\log(T-t)|.
$$
Let $w(t,r) = v(t,r)-v(T,r)$. By Cauchy-Schwarz,
\begin{align*}
    |w(t,\sigma(t)) - w(t,\delta)| &= \left( \int_{\sigma(t)}^{\delta}|w_r|^2rdr \right)^{1/2} \left( \int_{\sigma(t)}^{\delta} r^{-1}dr \right)^{1/2}  \\
    &\leq ||v(t,\cdot)-v(T,\cdot)||_{H^1(B_{\delta}(0) \setminus B_{\alpha \sqrt{T-t}}(0))} \cdot \sqrt{L(t)} \to 0.
\end{align*}
Moreover, $w(t,\delta) \to 0$ as $v$ is smooth at $r = \delta > 0$ up to time $T$. It follows that
$$
\lim \limits_{t \to T}|v(t,\sigma(t))-\pi| \leq \lim \limits_{t \to T} |v(t,\sigma(t)) - v(T,\sigma(t))| = \lim \limits_{t \to T} |w(t,\sigma(t))| = 0
$$
since $\lim \limits_{t \to T} v(T,\sigma(t)) = \pi$. Let
$$
A(t) = \int_0^{\sigma(t)}v_r^2rdr, \quad B(t) =  \int_0^{\sigma(t)} \frac{D^2}{r^2} \sin(v)^2 rdr.
$$
It follows from (\ref{eq: no concentration at self-similar scale}) and (\ref{eq: energy of bubble is recovered on 0 < r < sqrt T - t}) that
$$
\lim_{t \to T} E(v(t,\cdot);B_{\sigma(t)}(0)) = E(Q) = 4\pi D, \quad \lim_{t \to T} (A(t) + B(t)) = 4D.
$$
Cauchy-Schwarz also gives
\begin{align*}
    \sqrt{A(t)B(t)} &\geq D \left| \int_0^{\sigma(t)} v_r \sin v dr \right| \\
    &\geq  D |\cos(v(t,0)) - \cos(v(t,\sigma(t)))| = 2D + o_{t \to T}(1).
\end{align*}
Finally, recall that
$$
2\sqrt{A(t)B(t)} \leq A(t) + B(t),
$$
hence
$$
\lim_{t \to T} 2\sqrt{A(t)B(t)} = 4D,
$$
which implies
$$
\lim_{t \to T} |A(t)-B(t)| = 0, \quad \lim \limits_{t \to T}A(t) =  \lim \limits_{t \to T}B(t) = 2D.
$$
We have shown in particular that
$$
\lim \limits_{t \to T} ||v(t,\cdot)||_{\dot{H}^1(B^{(2)}_{\sigma(t)}(0))}^2 = 4 \pi D = E(Q).
$$
The part 
$$
 ||v(T,\cdot)||_{\dot{H}^1(B_{\sigma(t)}(0))} \to 0
$$
is negligible by absolute continuity of the Lebesgue integral and so is the part
$$
 ||v(t,\cdot) - v(T,\cdot)||_{\dot{H}^1(B_{\eta}(0) \setminus B_{\sigma(t)}(0))} \to 0
$$
by (\ref{eq: no concentration at self-similar scale}). The $L^2$-part causes no issue by dominated convergence. This finishes the proof.
\end{proof}

\section{Coercivity}

Let $Z \in C^{\infty}_c(\mathbb R^{2D+2})$ with $\langle Z, \Lambda W \rangle_{L^2(\mathbb R^{2D+2})} \neq 0$. We note that for $D \geq 2$, $2D + 2 \geq 5$, one has $\dot{H}^2_{rad}(\mathbb R^{2D+2}) \subset L^2_{loc}(\mathbb R^{2D+2})$. In the following, we will use the following abuse of notation:
$$
\langle Z \rangle^{\perp}_{\mathbb R^{2D+2}} = \{g \in \dot{H}^2_{rad}(\mathbb R^{2D+2}) : \langle g, Z \rangle_{L^2(\mathbb R^{2D+2})} = 0\}
$$
as the inner product is well-defined, since $Z$ is compactly supported.

The coercivity (for one bubble) states $||Hg||_{L^2} \simeq ||g||_{\dot{H}^2}$ for any $g \in \dot{H}^2_{rad}(\mathbb R^{2D+2}) \cap \langle Z \rangle^{\perp}_{\mathbb R^{2D+2}}$. Indeed, one has
$$
||Hg||_{L^2} \lesssim ||\Delta g||_{L^2} + ||r^{-2}g||_{L^2} \lesssim ||g||_{\dot{H}^2}
$$
using Rellich inequality if $D \geq 2$. 

The reverse inequality will be a consequence of the following. Observe that the map $H : \dot{H}^2_{rad}(\mathbb R^{2D+2}) \to L^2(\mathbb R^{2D+2})$ is injective on $\dot{H}^2_{rad} \cap \langle Z \rangle^{\perp}_{\mathbb R^{2D+2}}$. Otherwise, one would have $\Lambda W \in \dot{H}^2_{rad} \cap \langle Z \rangle^{\perp}_{\mathbb R^{2D+2}}$ meaning that $\langle \Lambda W, Z \rangle_{L^2(\mathbb R^{2D+2})}= 0$, which is a contradiction. 

From the injectivity of $H$ on $\dot{H}^2_{rad}(\mathbb R^{2D+2}) \cap \langle Z \rangle^{\perp}_{\mathbb R^{2D+2}}$, we can get another uniform (with respect to $\lambda$) coercivity result on $B_{\delta}(0)$ with
\begin{align*}
    H_{\lambda}  g &= -\Delta g - r^{-2} f'(Q_{\lambda})g.
\end{align*}
\begin{proposition}\label{prop:coercivity}
    There exists $\alpha_0 = \alpha_0(H, B_{\delta}(0), Z) > 0$ and constants $c_1 < c_2$, $c_i = c_i(H, B_{\delta}(0), Z)$, for which if $0 < \lambda \leq \alpha_0$, $g \in \dot{H}^2_{rad,0}(B_{\delta}(0)) \subset L^2(B_{\delta}(0))$ (where $\dot{H}^2_0$ denotes the closure of $C^{\infty}_c$ in the $\dot{H}^2$-norm) and $g \perp_{L^2(B_{\delta}(0))} Z_{\lambda}$, then
$$
c_1 ||g||_{\dot{H}^2(B_{\delta}(0))} \leq ||H_{\lambda} g||_{L^2(B_{\delta}(0))} \leq c_2 ||g||_{\dot{H}^2(B_{\delta}(0))}.
$$
\end{proposition}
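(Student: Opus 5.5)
The upper bound is the routine half. For $g \in \dot{H}^2_{rad,0}(B_{\delta}(0))$ I extend by zero to $\mathbb R^{2D+2}$. Then $\|\Delta g\|_{L^2} \le \|g\|_{\dot H^2}$. The potential satisfies $r^{-2}|f'(Q_\lambda)| \lesssim r^{-2}$ uniformly in $\lambda$, because $f'(Q)/D^2 = 1-\cos(2Q) = 2\sin^2 Q$ is bounded. The Rellich inequality in dimension $2D+2\ge 6$ then gives $\|r^{-2}g\|_{L^2}\lesssim \|g\|_{\dot H^2}$. This produces $c_2$, independent of $\lambda$ and $\delta$.

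For the lower bound I rescale to $\lambda=1$. Set $\tilde g(y) = g(\lambda y)$, supported in $B_{\delta/\lambda}$. Then $(H_\lambda g)(\lambda y) = \lambda^{-2}(H\tilde g)(y)$, and the $\dot H^2$ norm and the $L^2$ norm of $Hg$ rescale by the same power of $\lambda$. The orthogonality $g\perp Z_\lambda$ becomes $\tilde g\perp Z$ in $L^2(\mathbb R^{2D+2})$. Here I use that $Z_\lambda$ is the $L^2$ or $\dot H^1$-type rescaling of $Z$, with support in $B_{\lambda R_Z}\subset B_\delta$ once $\lambda\le\alpha_0$ is small. This is where $\alpha_0$ enters, so that the inner product on $B_\delta$ coincides with the full-space one. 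It therefore suffices to prove the whole-space estimate
\[
\|\tilde g\|_{\dot H^2(\mathbb R^{2D+2})}\le c_1^{-1}\|H\tilde g\|_{L^2(\mathbb R^{2D+2})},\qquad \tilde g\in \dot H^2_{rad}\cap\langle Z\rangle^{\perp}_{\mathbb R^{2D+2}},
\]
which is automatically uniform in $\lambda$ and $\delta$.

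The whole-space estimate I prove by contradiction and compactness. Suppose $\|g_n\|_{\dot H^2}=1$, $g_n\perp Z$, and $\|Hg_n\|_{L^2}\to0$. Pass to a weak limit $g_n\rightharpoonup g$ in $\dot H^2_{rad}$. The embedding $\dot H^2\subset L^2_{loc}$ is compact on bounded sets (Rellich–Kondrachov for $\dot H^2 \hookrightarrow H^1_{loc}$), so $\langle g,Z\rangle=0$. Since $Hg_n\to0$ weakly, $Hg=0$ distributionally, and radial $\dot H^2$ solutions of $Hg=0$ lie in the span of $\Lambda W$, because $\Gamma_2\sim r^{-2D}$ is not locally admissible. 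Together with $\langle\Lambda W,Z\rangle\neq0$ this forces $g=0$.

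The main obstacle is ruling out loss of mass, that is, showing that $g=0$ contradicts $\|g_n\|_{\dot H^2}=1$. I would write $\Delta g_n = -Hg_n - r^{-2}f'(Q)g_n$. The potential $r^{-2}f'(Q)=8D^2 r^{2D-2}(1+r^{2D})^{-2}$ decays like $r^{-2D-2}$ at infinity and is bounded near $0$ (indeed $\sim r^{2D-2}$), so multiplication by it is a compact operator $\dot H^2_{rad}\to L^2$. To see this, use local compactness on $B_R$ and, on the tail $r>R$, the weighted Rellich/Hardy bound $\|r^{-2}\,\cdot\|_{L^2}\lesssim\|\cdot\|_{\dot H^2}$, with the extra factor $\sup_{r>R} r^2|V|\to0$. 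Hence $r^{-2}f'(Q)g_n\to0$ strongly in $L^2$, so $\|\Delta g_n\|_{L^2}\to0$, contradicting $\|g_n\|_{\dot H^2}=1$. The only care needed is the small-$r$ region, where compactness holds because the potential is bounded there. Transferring back to $B_\delta$ then gives $c_1$ depending only on $H,Z$ (and trivially on $\delta$).
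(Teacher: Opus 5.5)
Your proposal is correct and follows essentially the paper's argument. Both run a contradiction/compactness scheme: the decay of $r^{-2}f'(Q)$ away from $r\sim\lambda$ stops the $\dot H^2$ mass from escaping, so the rescaled weak limit would be a nonzero element of $\ker H\cap\langle Z\rangle^{\perp}_{\mathbb R^{2D+2}}$, contradicting injectivity. In your version this decay enters as compactness of multiplication by the potential from $\dot H^2_{rad}$ to $L^2$; in the paper it enters as the splitting with the factor $A^{-2}$.

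The differences are cosmetic. You rescale to $\lambda=1$ before running the argument, and you justify that the radial $\dot H^2$ kernel of $H$ is spanned by $\Lambda W$, which the paper leaves implicit. Also, since $g$ vanishes outside $B_{\delta}(0)$, the orthogonality transfers for every $\lambda>0$, so the smallness $\lambda\le\alpha_0$ is not actually needed where you invoke it.
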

\begin{proof}

The existence of $c_2$ follows from Rellich's inequality (as $f'(Q_{\lambda})$ is bounded uniformly in $\lambda$). Assume $c_1$ does not exist. Then there exist sequences $\alpha_n \to 0$, $\lambda_n \leq \alpha_n$, $g_n \in \dot{H}^2_{rad,0}(B_{\delta}(0))$, $g_n \perp_{L^2(B_{\delta}(0))} Z_{\lambda_n}$, $||g_n||_{\dot{H}^2_{rad}(B_{\delta}(0))} = 1$, $||H_{\lambda_n}g_n||_{L^2(B_{\delta}(0))} \to 0$.

Note that for any $A  > 1$, $\lambda > 0$,
$$
f'(Q) \leq \begin{cases}
    8D^2 r^2, \quad r \leq 1 \\
    8D^2 r^{-2}, \quad r \geq 1
\end{cases}, \quad 
f'(Q_{\lambda}) \leq \begin{cases}
    8D^2 A^{-2} , \quad r \leq A^{-1}\lambda \\
    8D^2, \quad r \in [A^{-1}\lambda,A\lambda] \\
    8D^2 A^{-2}, \quad r \geq A\lambda
\end{cases}
$$
Thanks to Rellich's inequality,
\begin{align*}
    1 = ||g_n||_{\dot{H}^2_{rad}(B_{\delta}(0))} &\lesssim ||\Delta g_n||_{L^2(B_{\delta}(0))} \leq ||H_{\lambda_n} g_n||_{L^2(B_{\delta}(0))}  + ||r^{-2}f'(Q_{\lambda_n})g_n||_{L^2(B_{\delta}(0))} \\
    &\lesssim o_n(1) + A^{-2}||r^{-2}g_n||_{L^2(B_{\delta}(0))}  + ||1_{ [A^{-1}\lambda_n,A\lambda_n]}r^{-2}g_n||_{L^2(B_{\delta}(0))} \\
    &\lesssim o_n(1) + A^{-2} +  ||1_{ [A^{-1}\lambda_n,A\lambda_n]}r^{-2}g_n||_{L^2(B_{\delta}(0))} \lesssim 1.
\end{align*}
Hence, by setting $n \geq N_0 \gg 1$ and $A \gg 1$, we deduce that
$$
||1_{ [A^{-1}\lambda_n,A\lambda_n]}r^{-2}g_n||_{L^2(B_{\delta}(0))}  \simeq 1.
$$
Let $h_n(x) = \lambda_n^{D-1}g_n(\lambda_nx)$. Then 
$$
||h_n||_{\dot{H}^2_{rad}(B_{\delta/\lambda_n})} = 1, \quad \langle h_n, Z \rangle_{L^2_{rad}(B_{\delta/\lambda_n})} = 0, \quad ||H h_n||_{L^2(B_{\delta/\lambda_n})} =  ||H_{\lambda_n} g_n ||_{L^2(B_{\delta})} \to 0,
$$
as well as 
$$
||1_{ [A^{-1},A]}r^{-2}h_n||_{L^2(B_{\delta/\lambda_n}(0))} \simeq 1
$$
Let $h \in \dot{H}^2_{rad}(\mathbb R^{2D+2})$ be the weak $\dot{H}^2(\mathbb R^{2D+2})$-limit of the sequence $(h_n)$. In particular, the convergence is strong in $L^2_{loc}(\mathbb R^{2D+2})$ by the Rellich–Kondrachov Theorem. Hence, $h \neq 0$ and observe that $\langle h, Z \rangle_{L^2(\mathbb R^{2D+2})} = 0$ (i.e., $h \in \langle Z \rangle^{\perp}_{\mathbb R^{2D+2}}$) as $Z$ has compact support. Moreover, $H h_n \to Hh$ weakly in $L^2(\mathbb R^{2D+2})$. Indeed, $\Delta h_n \to \Delta h$ weakly and
\begin{align*}
    |\langle |x|^{-2} f'(Q) (h_n-h), \phi \rangle| &\leq ||h_n-h||_{L^2(\supp \phi)} \cdot |||x|^{-2} \phi||_{L^2(\mathbb R^{2D+2})} \\
    &\leq ||h_n-h||_{L^2(\supp \phi)} \cdot ||\phi||_{\dot{H}^2(\mathbb R^{2D+2})} \to 0
\end{align*}
by Rellich inequality, for any $\phi \in C^{\infty}_c(\mathbb R^{2D+2})$. But for any test function,
\begin{align*}
    \langle Hh, \phi \rangle_{L^2(\mathbb R^{2D+2})} = \lim_{n \to +\infty} \langle H h_n, \phi \rangle \leq \lim_{n \to +\infty} ||H h_n||_{L^2(B_{\delta/\lambda_n}(0))} \cdot ||\phi||_{L^2(\mathbb R^{2D+2})} \to 0,
\end{align*}
which means that $H h = 0$. But on $\dot{H}^2_{rad}(\mathbb R^{2D+2}) \cap \langle Z \rangle^{\perp}_{\mathbb R^{2D+2}}$, $H$ is injective as previously seen. This contradicts $h \neq 0$.
    
\end{proof}

\section{Orbital Stability}\label{sec:orbital stability}
In what follows, we will work at the level of $u(r,t)$. 

Fix $\delta < 1$ (e.g. $\delta = 1/4$) a constant for which the profile decomposition applies. Fix $Z = ||\Lambda W||_{L^2(\mathbb R^{2D+2})}^{-1} (\Lambda W) \cdot \chi_{r \leq R_0}$ where $R_0 > 1$ is large enough so that $\langle Z, \Lambda W \rangle_{L^2(\mathbb R^{2D+2})} \geq 1/2$. 

The orbital stability is a classical result which follows from the implicit function theorem. For one bubble, the proof is as follows:
\begin{proposition}\label{prop: orbital stability}
For all $\delta_2 > 0$ small enough (depending on $\delta, \Lambda W, Z, D$), there exists $\delta_1 < \delta_2$  such that for all $\varepsilon \in \{-1,1\}$ and $0 < \lambda_0 \leq \delta/(2R_0)$, there is a unique $C^1$-function $\lambda: B_{\delta_1}(\varepsilon W_{\lambda_0}) \subset H^1(B_{\delta}(0)) \rightarrow \exp(B_{\delta_2}(y_0)) \subset (e^{-1}\lambda_0,e\lambda_0)$, $\lambda_0 = e^{y_0}$, for which $\lambda(\varepsilon W_{\lambda_0}) = \lambda_0$ and
\[
v := u- \varepsilon W_{\lambda(u)} \perp_{L^2(B_{\delta}(0))}  Z_{\lambda(u)}.
\]
In other words, if $||u-\varepsilon W_{\lambda_0}||_{H^1(B_{\delta}(0))}  \leq \delta_1$, then
\[
u = \varepsilon W_{\lambda(u)} +v, \quad v \perp_{L^2(B_{\delta}(0))} Z_{\lambda(u)}.
\]
The family of $C^1$-maps $(\lambda_{\lambda_0,\varepsilon})$ is compatible in the sense that
$$
\lambda_{\lambda_0,\varepsilon} = \lambda_{\lambda_1,\tilde{\varepsilon}}
$$
on overlaps
$$
B_{\delta_1}(\varepsilon W_{\lambda_0}) \cap B_{\delta_1}(\tilde{\varepsilon} W_{\lambda_1}).
$$
\end{proposition}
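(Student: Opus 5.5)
We apply the implicit function theorem in logarithmic scale variables. Set $\lambda = e^{y}$ and define
\[
F : H^1(B_{\delta}(0)) \times \mathbb R \to \mathbb R, \qquad F(u,y) = \langle u - \varepsilon W_{e^y}, Z_{e^y} \rangle_{L^2(B_{\delta}(0))}.
\]
Since $Z$ is supported in $r \leq 2R_0$, the rescaled $Z_{\lambda}$ is supported in $B_{2R_0\lambda}(0)$. For $\lambda \leq e\lambda_0$ and $\lambda_0$ small enough this ball lies inside $B_{\delta}(0)$. The constraint $\lambda_0 \leq \delta/(2R_0)$ has to be read together with the shrinking factor $e^{\pm\delta_2}$ in this way.

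\textbf{Step 1: regularity and nondegeneracy.} Because $Z_\lambda$ is compactly supported inside the ball, $F$ is affine and continuous in $u$, and smooth in $y$. We have $F(\varepsilon W_{\lambda_0}, y_0) = 0$. Next we compute $\partial_y F$ at $(\varepsilon W_{\lambda_0}, y_0)$. The term where $\partial_y$ hits $Z_{e^y}$ is multiplied by $u - \varepsilon W_{e^{y}} = 0$, so it vanishes. What remains is $-\varepsilon\langle \partial_y W_{e^y}, Z_{e^y}\rangle = \varepsilon \langle \Lambda W_{\lambda}, Z_{\lambda}\rangle$, with the appropriate normalisation of the rescaled generators. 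By the choice of $Z$ and scaling, this equals $\varepsilon \lambda^{\kappa}\langle \Lambda W, Z\rangle_{L^2(\mathbb R^{2D+2})}$ for an explicit power $\kappa$ determined by the scaling of $L^2(\mathbb R^{2D+2})$.

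To remove the $\lambda_0$-dependence, we normalise the rescalings so that $\Lambda W_\lambda$ and $Z_\lambda$ are paired in the scale-invariant way, for instance $Z_\lambda$ is the $L^2$-dual rescaling. Then the derivative is $\pm\langle \Lambda W, Z\rangle \geq 1/2$ in absolute value, uniformly in $\lambda_0$. The whole point of the log-variable $y$ is that all estimates become translation invariant in $y$.

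\textbf{Step 2: quantitative IFT, uniform in $\lambda_0$.} We want $\delta_1,\delta_2$ independent of $\lambda_0$, so we use the quantitative form of the implicit function theorem (contraction mapping in $y$). Two bounds are needed.

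1. $|\partial_y F(u,y) - \partial_y F(\varepsilon W_{\lambda_0},y_0)| \leq 1/4$ whenever $|y-y_0|\leq\delta_2$ and $\|u-\varepsilon W_{\lambda_0}\|_{H^1}\leq \delta_1$. This follows from $|\partial_y F(u,y)-\partial_y F(\varepsilon W_{e^y},y)| \lesssim \|u-\varepsilon W_{e^y}\|_{L^2}\,\|\Lambda Z_{e^y}\|_{L^2}$. It also uses $\|W_{e^y}-W_{\lambda_0}\|_{L^2(B_\delta)} \lesssim |y-y_0|$, together with continuity in $y$ of $\langle \Lambda W_{e^y}, Z_{e^y}\rangle$.
2. $|F(u,y_0)| \lesssim \delta_1$.

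The main obstacle is exactly this uniformity. $H^1(B_\delta)$ is not scale invariant, and the relevant norms of $Z_\lambda$, $\Lambda Z_\lambda$ carry powers of $\lambda$. I would handle this by measuring $u$ in the norm that pairs correctly: by Hölder and Sobolev on $\mathbb R^{2D+2}$, $|\langle g, Z_\lambda\rangle| \lesssim \|g\|_{L^{2^*}}\|Z_\lambda\|_{L^{(2^*)'}}$, and the second factor scales exactly like the normalisation of $\langle \Lambda W_\lambda, Z_\lambda\rangle$. Together with $\|g\|_{L^{2^*}(B_\delta)}\lesssim\|g\|_{H^1(B_\delta)}$, this gives bounds uniform in $\lambda_0$. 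With these two bounds, the map $y\mapsto y - F(u,y)/\partial_yF(\varepsilon W_{\lambda_0},y_0)$ is a contraction of $[y_0-\delta_2,y_0+\delta_2]$. Its fixed point gives existence and uniqueness of $\lambda(u)$ in $\exp(B_{\delta_2}(y_0))$. The $C^1$ regularity follows from the standard IFT formula $D\lambda = -\lambda\,\partial_uF/\partial_yF$.

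\textbf{Step 3: compatibility.} Take $u$ in an overlap $B_{\delta_1}(\varepsilon W_{\lambda_0})\cap B_{\delta_1}(\tilde\varepsilon W_{\lambda_1})$. The triangle inequality gives $\|\varepsilon W_{\lambda_0}-\tilde\varepsilon W_{\lambda_1}\|_{H^1}\leq 2\delta_1$. For $\delta_1$ small this forces $\varepsilon=\tilde\varepsilon$, since $\|W_\lambda\|$ is bounded below, and it also forces $|\log(\lambda_0/\lambda_1)|$ to be small. Both $\lambda_{\lambda_0,\varepsilon}(u)$ and $\lambda_{\lambda_1,\varepsilon}(u)$ are zeros of $F(u,\cdot)$. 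They lie within $\delta_2 + o(1)$ of $y_0$, so both lie in a single interval on which $\partial_yF\neq 0$, and uniqueness there gives equality. To make this work, I would choose $\delta_1$ small relative to $\delta_2$, for instance so that the contraction interval around $y_1$ contains the one around $y_0$ shrunk by half.
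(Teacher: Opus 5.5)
Your proposal is correct and follows essentially the same route as the paper. Both arguments apply the implicit function theorem in $y=\log\lambda$ with the scale-invariant pairing against $\lambda^{-2}Z_\lambda$, use $\lambda_0$-uniform bounds on $F(u,y_0)$ and on $\partial_yF(u,y)-\partial_yF(\varepsilon W_{\lambda_0},y_0)$, and get compatibility by excluding $\tilde\varepsilon=-\varepsilon$ and then using strict monotonicity of $F(u,\cdot)$ on the connected union of the two $y$-intervals. The differences are only technical: you get uniformity from H\"older--Sobolev ($L^{2^*}$ against $L^{(2^*)'}$, which is scale-invariant under the $\lambda^{-2}$ normalisation) rather than the paper's cutoff-plus-Hardy pairing, and you spell out the quantitative implicit function theorem as a contraction instead of citing it.
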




\begin{proof} 

Fix $\varepsilon \in \{-1,1\}$. Let $\Phi : H^1(B_{\delta}(0)) \times (0,+\infty) \rightarrow \mathbb R$ be the $C^1$ map defined via
\[
(u,\lambda) \mapsto \langle u - \varepsilon W_{\lambda}, \varepsilon \lambda^{-2} Z_{\lambda} \rangle_{L^2(B_{\delta}(0))}.
\]
For any $\lambda_0 \in \mathbb R_{> 0}$ fixed,
$$
\Phi(\varepsilon W_{\lambda_0}, \lambda_0) = 0.
$$
Fix $\chi \in C^{\infty}_c(B_{\delta}(0))$, $0 \leq \chi \leq 1$, such that $\chi = 1$ on $B_{\delta/2}$. If $\lambda^{-1} \delta \geq 2R_0$, then
\begin{align*}
    \lambda \partial_{\lambda} \Phi(u,\lambda) &= \langle  (\Lambda W)_{\lambda}, \lambda^{-2}  Z_{\lambda} \rangle_{L^2(B_{\delta}(0))}  - \langle u-\varepsilon W_{\lambda}, \varepsilon  \lambda^{-2}[(\Lambda Z)_{\lambda} + 2Z_{\lambda}] \rangle_{L^2(B_{\delta}(0))} \\
    &\geq \frac{1}{2}- \langle u-\varepsilon W_{\lambda}, \varepsilon  \lambda^{-2}[(\Lambda Z)_{\lambda} + 2Z_{\lambda}] \rangle_{L^2(B_{\lambda R_0}(0))} \\
    &\geq \frac{1}{2}- \langle (u-\varepsilon W_{\lambda})\chi, \varepsilon  \lambda^{-2}[(\Lambda Z)_{\lambda} + 2Z_{\lambda}] \rangle_{L^2(\mathbb R^{2D+2})} \\
    &\geq \frac{1}{2} - \delta^{-1}C( Z, D) ||u-\varepsilon W_{\lambda}||_{H^1(B_{\delta}(0))}.
\end{align*}
In other words, for $\lambda = e^y$ with $y \leq \log(\delta/(2R_0))$,
\begin{align*}
   \partial_y \left[ \Phi(u,e^y) \right] &= \langle  (\Lambda W)_{\lambda}, \lambda^{-2}  Z_{\lambda} \rangle_{L^2(B_{\delta}(0))}  - \langle u-\varepsilon W_{\lambda}, \varepsilon  \lambda^{-2}[(\Lambda Z)_{\lambda} + 2Z_{\lambda}] \rangle_{L^2(B_{\delta}(0))} \\
    &\geq \frac{1}{2} - \delta^{-1}C( Z, D) ||u-\varepsilon W_{\lambda}||_{H^1(B_{\delta}(0))}.
\end{align*}
 Similarly, for $\delta/(2R_0) \geq \lambda' \geq \lambda$,
\begin{align*}
      \partial_y \left[ \Phi(u,e^y) \right] - \partial_y \left[ \Phi(u',e^{y'}) \right]  &= \langle  (\Lambda W),  Z \rangle_{L^2(B_{\delta/\lambda}(0) \setminus B_{\delta/\lambda'}(0))}  \\
     &- \langle u-\varepsilon W_{\lambda}, \varepsilon  \lambda^{-2}[(\Lambda Z)_{\lambda} + 2Z_{\lambda}] \rangle_{L^2(B_{\delta}(0))} \\
     &+ \langle u'-\varepsilon W_{\lambda'}, \varepsilon  \lambda'^{-2}[(\Lambda Z)_{\lambda'} + 2Z_{\lambda'}] \rangle_{L^2(B_{\delta}(0))} \\
    &\leq   \delta^{-1} C( Z, D) (||u-\varepsilon W_{\lambda}||_{H^1(B_{\delta}(0))}  +  ||u'-\varepsilon W_{\lambda'}||_{H^1(B_{\delta}(0))}),
\end{align*}
where we used that
$$
\langle  (\Lambda W),  Z \rangle_{L^2(B_{\delta/\lambda}(0) \setminus B_{\delta/\lambda'}(0))}  = 0,
$$
since $Z$ is supported on $B_{R_0}(0)$. Moreover, $\Phi$ is $C^1$ in $u$, with Lipschitz bound 
$$
|\Phi(u,\lambda) - \Phi(u',\lambda)| = | \langle u - u', \varepsilon \lambda^{-2} Z_{\lambda} \rangle_{L^2(B_{\delta}(0))}| \leq \delta^{-1} C(Z, D) ||u-u'||_{H^1(B_{\delta}(0))}.
$$
Note also that
$$
||(\Lambda W)_{\theta}||_{H^1(B_{\delta}(0))} \leq (1+1_{D = 1}|\log(\delta)|) C(\Lambda W, D), \quad \theta \in (0,e).
$$
Hence, there exists a constant $C(\Lambda W, Z, D) > 1$ such that if 
\begin{align}
    \delta_2 &= \delta_2(\delta,\Lambda W, Z, D) \leq \frac{\delta}{12 (1 + 1_{D = 1} |\log(\delta)|) C(\Lambda W, Z, D)^2}  \\
\delta_1 &= \delta_1(\delta,\Lambda W, Z, D) \leq \frac{\delta_2}{6\delta^{-1}C(\Lambda W,Z,D)}  \label{eq: delta_1 definition}
\end{align}
and $\lambda_0 \leq \delta/(2R_0) \leq 1/2$ are fixed, the restriction of $F(u,y) = \Phi(u,e^y)$ to $B_{\delta_1}(\varepsilon W_{\lambda_0}) \times B_{\delta_2}(y_0) \subset H^1(B_{\delta}(0)) \times \mathbb R$, where $\lambda_0 = e^{y_0}$, satisfies
\begin{align}
F(\varepsilon W_{\lambda_0},y_0) &= 0, \quad L_0 = \partial_{y}F(\varepsilon W_{\lambda_0},y_0) \geq 1/2 \notag \\
    |F(u,y_0)-0| &\leq \delta^{-1}C(\Lambda W,Z,D)||u-\varepsilon W_{\lambda_0}||_{H^1(B_{\delta}(0))} \notag \\
    &\leq \delta^{-1} C(\Lambda W,Z,D)\delta_1  \leq \delta_2/6 \leq \delta_2 L_0/3 \\
        ||W_{e^y}-W_{e^{y_0}}||_{H^1(B_{\delta}(0))} &\leq |e^y-e^{y_0}| \cdot ||(\Lambda W)_{\theta}||_{H^1(B_{\delta}(0))}, \quad \theta \in [e^y,e^{y_0}] \subset (0,e) \notag \\
        &\leq |y-y_0|  \cdot (1 + 1_{D=1}|\log(\delta)|) C(\Lambda W, Z, D) \notag \\
            &\leq \delta_2 \cdot  (1 + 1_{D=1}|\log(\delta)|) C(\Lambda W, Z, D) \label{eq: estimate difference of W} \\
    |\partial_{y} F(u,y) - L_0| &\leq \delta^{-1}C(\Lambda W,Z,D)(\delta_1 + ||W_{e^y}-W_{e^{y_0}}||_{H^1(B_{\delta}(0))}) \notag \\ 
    &\leq \delta^{-1}(1 + 1_{D=1}|\log(\delta)|)C(\Lambda W, Z,D)^2(\delta_1 + \delta_2) \notag \\
    &\leq 1/6 \leq L_0/3 \label{eq: estimate on partial_y F}.
\end{align}

The Implicit Function Theorem (see \cite{jendrej:hal-03365769}, Remark 2.25) yields the existence of a unique $C^1$-function $\lambda: B_{\delta_1}(\varepsilon W_{\lambda_0}) \subset H^1(B_{\delta}(0)) \rightarrow \exp\left( B_{\delta_2}(y_0) \right) \subset (0,+\infty)$ for which $\lambda(\varepsilon W_{\lambda_0}) = \lambda_0$ and $\Phi(u,\lambda(u)) = 0$, i.e.,
\[
v := u- \varepsilon W_{\lambda(u)} \perp_{L^2(B_{\delta}(0))}  Z_{\lambda(u)}.
\]
In other words, if $||u-\varepsilon W_{\lambda_0}||_{H^1(B_1(0))}  \leq \delta_1$, then
\[
u = \varepsilon W_{\lambda(u)} +v, \quad v \perp_{L^2(B_{\delta}(0))} Z_{\lambda(u)}.
\]
with $e^{-1}\lambda_0 \leq e^{y_0-\delta_2} \leq \lambda(u) \leq e^{y_0 + \delta_2} \leq e\lambda_0$.

We now shrink $\delta_1,\delta_2$, if necessary, so that the maps $\lambda$ given by the implicit function theorem are compatible on overlapping balls. From (\ref{eq: estimate on partial_y F}), 
$$
\partial_yF_\varepsilon(u,y)\ge 2L_0/3 \geq 1/3
$$
for all $(u,y) \in B_{\delta_1}(\varepsilon W_{e^{y_0}}) \times B_{\delta_2}(y_0)$. Define
$$
m(\eta):=
\inf_{\substack{0<\lambda,\mu\le \delta/(2R_0)\\
|\log(\lambda/\mu)|\ge \eta}}
\|W_\lambda-W_\mu\|_{H^1(B_\delta)}>0, \quad \eta > 0,
$$
and
$$
m_\pm:=
\inf_{0<\lambda,\mu\le \delta/(2R_0)}
\|W_\lambda+W_\mu\|_{H^1(B_\delta)}>0.
$$
Choose $0 < \tilde{\delta}_2 < \delta_2$ smaller so that
$$
(1 + 1_{D=1}|\log(\delta)|) C(\Lambda W, Z, D) \cdot \tilde{\delta}_2\le \delta_1/2,
$$
where $C$ is the constant in the upper bound
$$
\|W_{e^y}-W_{e^{y_0}}\|_{H^1(B_{\delta}(0))}\leq |y-y_0| \cdot (1 + 1_{D=1}|\log(\delta)|) C(\Lambda W, Z, D)
$$
from (\ref{eq: estimate difference of W}).
Choose $0 < \tilde{\delta}_1 < \delta_1/2$ satisfying (\ref{eq: delta_1 definition}) with $\tilde{\delta}_2$ instead of $\delta_2$, as well as
$$
2\tilde{\delta}_1 < m(\tilde{\delta}_2/2),
\qquad
2\tilde{\delta}_1 < m_\pm.
$$
Now suppose
$$
u\in B_{\tilde{\delta}_1}(\varepsilon W_{\lambda_0})
   \cap B_{\tilde{\delta}_1}(\widetilde\varepsilon W_{\lambda_1}),
\qquad
\lambda_j=e^{y_j}.
$$
If \(\widetilde\varepsilon=-\varepsilon\), then
$$
\|W_{\lambda_0}+W_{\lambda_1}\|_{H^1(B_\delta)}
\le
\|u-\varepsilon W_{\lambda_0}\|_{H^1}
+
\|u+\varepsilon W_{\lambda_1}\|_{H^1}
<2\tilde{\delta}_1,
$$
contradicting \(2\tilde{\delta}_1<m_\pm\). Hence an overlap can occur only when
\(\widetilde\varepsilon=\varepsilon\).

In that case,
$$
\|W_{\lambda_0}-W_{\lambda_1}\|_{H^1(B_\delta)}
\le 2\tilde{\delta}_1,
$$
so by the definition of $m(\tilde{\delta}_2/2)$, we get
$$
|y_0-y_1|<\tilde{\delta}_2/2.
$$
Therefore the two intervals
$$
B_{\tilde{\delta}_2}(y_j) =(y_j-\tilde{\delta}_2,y_j+\tilde{\delta}_2),\qquad j=0,1,
$$
have a connected union. Moreover, for every $y\in B_{\tilde{\delta}_2}(y_0)\cup B_{\tilde{\delta}_2}(y_1)$, choosing \(j\)
such that \(y\in B_{\tilde{\delta}_2}(y_j)\), we have
$$
\|u-\varepsilon W_{e^y}\|_{H^1(B_\delta)}
\le
\|u-\varepsilon W_{\lambda_j}\|_{H^1(B_\delta)}
+
\|W_{\lambda_j}-W_{e^y}\|_{H^1(B_\delta)}
\le
\delta_1/2 + \delta_1/2.
$$
Hence
$$
\partial_yF_\varepsilon(u,y)\ge \frac{1}{3}
$$
for 
$$
u \in  B_{\tilde{\delta}_1}(\varepsilon W_{\lambda_0})
   \cap B_{\tilde{\delta}_1}(\widetilde\varepsilon W_{\lambda_1}), \quad y \in B_{\tilde{\delta}_2}(y_0)\cup B_{\tilde{\delta}_2}(y_1), \quad \lambda_i = e^{y_i}.$$
   
Thus, \(F(u,\cdot)\) is strictly increasing on $B_{\tilde{\delta}_2}(y_0)\cup B_{\tilde{\delta}_2}(y_1)$, and so it has at most one zero there. But applying the implicit function theorem on both balls yields two $C^1$ functions $\lambda_0, \lambda_1$ for which
$$
F(u, \log \lambda_i(u)) = 0,
$$
meaning that $\lambda_0(u)=\lambda_1(u)$.
\end{proof}

Here is how we apply the result. Assume $D \geq 2$. Fix $\alpha_0 > 0$, $c_1 < 1 < c_2$ as in the Coercivity Result (Proposition \ref{prop:coercivity}). 

Let $C_S = C_S(D,\delta) > 1$ be an upper bound on the operator norms of the Sobolev embeddings 
\begin{equation}
    \dot{H}^1_0(B_{\delta}(0) \subset \mathbb R^{n}) \hookrightarrow L^{\frac{2n}{n-2}}(B_{\delta}(0)), \quad \dot{H}^2_0(B_{\delta}(0)) \hookrightarrow L^{\frac{2n}{n-4}}(B_{\delta}(0))\label{eq: sobolev embedding}
\end{equation}
with $n = 2D + 2 > 4$ (where $\dot{H}_0^i$ denotes the closure of smooth, compactly supported functions in the $\dot{H}^i$ norm).  Let also $C_R(D,\delta) > 1$ be a constant for which Hardy-Rellich's inequalities hold on $\dot{H}^1_0(B_{\delta}(0))$ and $\dot{H}^2_0(B_{\delta}(0))$, i.e.
\begin{equation}
    ||r^{-1} h||_{L^2(B_{\delta}(0))} \leq C_R ||h||_{\dot{H}^1(B_{\delta}(0))}, \quad ||r^{-2} h||_{L^2(B_{\delta}(0))} \leq C_R ||h||_{\dot{H}^2(B_{\delta}(0))}. \label{eq: rellich inequ}
\end{equation}
In particular, observe that
\begin{align}
    |\nabla(h \chi_{r_0/2})| \leq |\nabla h| + |h \nabla \chi_{r_0/2}| 
    &\leq |\nabla h| + 2 B_{\chi}^{(1)} |r^{-1} h|,  \label{eq: upper bound on nabla h cutoff}
\end{align}
which implies
\begin{align}
    || h \chi_{r_0/2}||_{H^1(B_{\delta}(0))}  &\leq || h ||_{H^1(B_{\delta}(0))}+  || h \nabla \chi_{r_0/2}||_{L^2(B_{\delta}(0))} \notag \\
    &\leq || h ||_{H^1(B_{\delta}(0))}+  2 B_{\chi}^{(1)}|| r^{-1} h||_{L^2(B_{\delta}(0))} \notag \\
    &\leq (1 + 2 C_R B_{\chi}^{(1)}) || h ||_{H^1(B_{\delta}(0))} \label{eq: rellich ineq with cutoff}
\end{align}
if $r_0 < \delta$ for any $h \in \dot{H}^1_0(B_{\delta}(0))$.
 Finally, let $\pi M \in \pi \mathbb N_{> 0}$ be a large multiple of $\pi$ for which
\begin{equation}
    |r^D u_*| + |r^Du(t,r)| + |r^DW| \leq M\pi. \label{eq: large multiple of pi}
\end{equation}
Such a $M$ exists by comparison principle for $v(r,t)$ (see \cite{samuelian2025blowuptrees}), as the initial and boundary data $v_0$ is bounded by a large multiple of $\pi$ on $B^2 \times [0,T]$.

 Let $\delta_2 \ll_{\Lambda W, Z, D, \delta} 1$ be small enough in the Orbital Stability Result (Proposition \ref{prop: orbital stability}). The appropriate smallness is chosen so that (\ref{eq: smallness of inner product with g_n}) will hold. Fix $\delta_1 < \delta_2$ as in Proposition \ref{prop: orbital stability}. Next, assume that $r_0 < \delta/2$ is small enough (depending on $\delta_1, \delta_2$), so that for all $\tilde{r}_0 \leq r_0$,
$$
 ||u_* \chi_{\tilde{r}_0}||_{H^1(B_{\delta}(0))}  \leq \delta_1/2
 $$
and take $r_0$ even smaller for (\ref{eq: smallness of inner product with g_n}) to hold. It is possible to take such a $r_0$ because (\ref{eq: upper bound on nabla h cutoff}) implies, by Hardy-Rellich and dominated convergence, that
$$
 \lim \limits_{r_0 \to 0} ||u_* \chi_{r_0}||_{H^1(B_{\delta}(0))} =  \lim \limits_{r_0 \to 0} ||(u_* \chi_{\delta/2}) \chi_{r_0}||_{H^1(B_{\delta}(0))}   = 0.
$$
Next, fix $\eta < \delta_1/2$ small enough so that (\ref{eq: smallness of inner product with g_n}) holds. Assume that there is a sequence $(\lambda_0^{(n)}) \to 0$ and a sequence of time-intervals $[t_0^{(n)},t_1^{(n)}]$ on which
\begin{equation}
     \sup_{t \in [t_0^{(n)},t_1^{(n)}]} ||u(\cdot,t) - u_*(\cdot) -  W_{\lambda_0^{(n)}}||_{H^1(B_{\delta}(0))} \leq \eta < \delta_1/2. \label{eq: profile decomposition for u on sequence of intervals}
\end{equation}
For all $n$ large enough, one can apply orbital stability. We obtain a scale $\lambda_n(t) \in C^1([t_0^{(n)},t_1^{(n)}],(e^{-1}\lambda_0^{(n)},e\lambda_0^{(n)}))$ (technically, it should be $\lambda_n(u(t))$) and a remainder $g_n(t,r)$ for which
$$
u(t,r) - (1-\chi_{r_0})u_*(r)=  W_{\lambda_n(t)}  + g_n(t,r), \quad t \in [t_0^{(n)},t_1^{(n)}], 0 \leq r \leq \delta 
$$
and $g_n$ satisfies the orthogonality condition
$$
0 = \langle g_n(t,\cdot), Z_{\lambda_n(t)} \rangle_{L^2(B_{\delta}(0))} = \langle g_n(t,\cdot), Z_{\lambda_n(t)} \rangle_{L^2(B_{\lambda(t)R_0}(0))}, \quad \quad t \in [t_0^{(n)},t_1^{(n)}].
$$
Replacing $g_n$ by $\tilde{g}_n = g_n + (1-\chi_{r_0})u_*(r)$, the same orthogonality condition is satisfied if $r_0/(4R_0) \geq (\lambda_0^{(n)})$. 

Moreover, $r^D\tilde{g}_n$ are bounded by a multiple of $\pi$ thanks to (\ref{eq: large multiple of pi}) and
\begin{align*}
    ||\tilde{g}_n \chi_{r_0/2}||_{H^1(B_{\delta}(0))} &= ||g_n \chi_{r_0/2}||_{H^1(B_{\delta}(0))} \leq ||u_*(r) \chi_{r_0/2}||_{H^1(B_{\delta}(0))} \\
    &+ ||[ u(\cdot,t) - u_*(r) -  W_{\lambda_0^{(n)}}] \chi_{\delta/2} \chi_{r_0/2}||_{H^1(B_{\delta}(0))} \\
    &+ ||[W_{\lambda_n(t)} - W_{\lambda_0^{(n)}}]\chi_{\delta/2} \chi_{r_0/2}||_{H^1(B_{\delta}(0))} \\
    &\leq ||u_*(r) \chi_{r_0/2}||_{H^1(B_{1}(0))} \\
    &+(1 + 2 C_R B_{\chi}^{(1)})  ||[ u(\cdot,t) - u_*(r) -  W_{\lambda_0^{(n)}}] \chi_{\delta/2} ||_{H^1(B_{\delta}(0))}\\
    &+ (1 + 2 C_R B_{\chi}^{(1)})||[W_{\lambda_n(t)} - W_{\lambda_0^{(n)}}]\chi_{\delta/2}||_{H^1(B_{\delta}(0))}, 
 \end{align*}
 whenever $t \in [t_0^{(n)},t_1^{(n)}], D \geq 1$, where we used (\ref{eq: rellich ineq with cutoff}). Then, using the upper bound $|\nabla \chi_{\delta/2}| \leq 2 \delta^{-1} B_{\chi}^{(1)}$, we obtain 
\begin{align}
   ||\tilde{g}_n \chi_{r_0/2}||_{H^1(B_{\delta}(0))} &= ||g_n \chi_{r_0/2}||_{H^1(B_{\delta}(0))} \leq ||u_*(r) \chi_{r_0/2}||_{H^1(B_{1}(0))} \notag \\
    &+(1 + 2 C_R B_{\chi}^{(1)})(1+2\delta^{-1}  B_{\chi}^{(1)}) \eta \notag\\
    &+  C(\Lambda W,Z,D)(1 + 2 C_R B_{\chi}^{(1)})(1+2\delta^{-1}  B_{\chi}^{(1)}) \delta_2 \notag \\
    &= o_{r_0 \to 0}(1) + \mathcal{O}(\eta) + \mathcal{O}(\delta_2), \quad t \in [t_0^{(n)},t_1^{(n)}], \quad D \geq 1, \label{eq: triple o upper bound}
 \end{align}
 where we used (\ref{eq: estimate difference of W}), (\ref{eq: profile decomposition for u on sequence of intervals}) and 
 $$
 \lim \limits_{r_0 \to 0} ||u_* \chi_{r_0}||_{H^1(B_{\delta}(0))}  = 0.
$$
 If $\delta_2$ is chosen small enough, $\delta_1$ is chosen as in Proposition \ref{prop: orbital stability}, $r_0$ and $\eta$ are chosen small enough (depending on $\delta_2,\delta_1$), then we obtain that for $t \in [t_0^{(n)},t_1^{(n)}]$,
 \begin{align}
    D^2 (M\pi)^{2-p}C_S^{p} ||\tilde{g}_n \chi_{r_0/2}||_{H^1(B_{\delta}(0))}^{p-1} &\leq c_1/2,  \notag \\
     ||\tilde{g}_n \chi_{r_0/2}||_{H^1(B_{\delta}(0))} &\leq 1, \quad p = 1 + 2/D, \notag \\
    ||\tilde{g}_n \chi_{r_0/2}||_{H^1(B_{\delta}(0))} &\leq  \tilde{C}^{-1} \min\{1,||\Lambda W||_{\dot{H}^1(B_{\delta}^{(2D+2)}(0))}^2,||Q||_{\dot{H}^1(B_1^{(2)}(0))}\} \label{eq: smallness of inner product with g_n}
\end{align}
uniformly in $n$, where $C_S$ comes from the Sobolev embedding (\ref{eq: sobolev embedding}),
$$
\tilde{C} = 4\left(\max\{A_{(\Lambda + 2)\Lambda W}, A_Z\}C_R ||r \langle r \rangle^{-2D}||_{L^2(\mathbb R^{2D+2})} + C(D,\delta) + 1 \right)
$$
is a constant for which $C_R$ comes from Hardy-Rellich inequality, $C(D,\delta)$ comes from Lemma \ref{lemma:equivalence of remainder convergence} and $A$ is obtained from inequality (\ref{eq: estimate on g chi lambda^-2 psi}) applied with $\psi = (\Lambda + 2)\Lambda W$ and $\psi = Z$. 
 


\section{$L^2$-estimate for $g(t,r)$}
Assume that $g$ is smooth, $D \geq 2$, $r_0 < \delta/2 < 1$, $\lambda(t) \leq \min\{\alpha_0,r_0/(4R_0)\}$,
\begin{align*}
    g(t,r) = u(t,r) - W_{\lambda(t)}, \quad t &\in [t_0,t_1], r \in B_{\delta}(0) \\
    |r^Dg(t,r)| \leq M \pi, \quad t &\in [t_0,t_1], r \in B_{\delta}(0) \\
    \langle g(t,\cdot), Z_{\lambda(t)} \rangle_{L^2(B_{\delta}(0))} = 0, \quad t &\in [t_0,t_1] \\
    D^2 (M\pi)^{2-p} C_S^{p} ||g \chi_{r_0/2}||^{p-1}_{\dot{H}^1(B_{\delta}(0))} \leq c_1/2. \quad t &\in [t_0,t_1], \ p = 1+2/D,
\end{align*}
where $R_0$ comes from $Z$, $C_S = C_S(D,\delta)$ is the Sobolev embedding constant from (\ref{eq: sobolev embedding}) and $\alpha_0(H,B_{\delta}(0),Z)$ and $c_1 = c_1(H,B_{\delta}(0),Z)$ are constants from the coercivity result (Proposition \ref{prop:coercivity}). Those hypotheses are satisfied for $g = \tilde{g}_n$ in Section \ref{sec:orbital stability}. The goal is to prove an $L^2$-estimate for the remainder $g(t,r)$ of the form
\begin{equation}
    \int_{t_0}^{t_1} ||g(t)||_{\dot{H}^2(B_{\delta}(0))}^2 dt \leq C(\delta,r_0,u,D,\chi,M,H,Z) <  +\infty,  \label{eq: l^2 estimate for g}
\end{equation}
which is independent of $t_0,t_1,\lambda(t)$.

First, observe that
\begin{equation}
    (1-\chi_{r_0/4})\Delta g = (1-\chi_{r_0/4})\Delta u -  (1-\chi_{r_0/4})\Delta W_{\lambda(t)} \label{eq: upper bound of g(t) in terms of Delta u}
\end{equation}
has bounded $L^2(B_{\delta}(0))$-norm uniformly in $t$, as $u$ is smooth away from the origin and $|\Delta W_{\lambda(t)}| \lesssim r^{-D-2} \in L^2(\mathbb R^{2D+2} \setminus B_{r_0/2}(0))$.

For the part localized at the origin, the idea is to prove the pointwise bound
\begin{equation}
     ||g(t)\chi_{r_0/4}||_{\dot{H}^2(B_{\delta}(0))}\leq ||u_t\chi_{r_0/4}||_{L^2(B_{\delta}(0))}  + C(\delta,r_0,D,\chi,M,H,Z), \label{eq: upper bound of g(t) in terms of u_t}
\end{equation}
which is integrable by (\ref{eq: finite L^2-norm of v_t}). For this, we look at the PDE for $g(t,r)$, i.e.,
\begin{equation}
    u_t = - H_{\lambda(t)} g + NL(g),\label{eq: pde for g(t,r)}
\end{equation}
where 
\begin{align*}
    H_{\lambda(t)}  g &= -\Delta g - r^{-2} f'(Q_{\lambda(t)})g.
\end{align*}
We multiply by $\chi_{r_0/4}$, so that 
\begin{equation}
    u_t \chi_{r_0/4} = - H_{\lambda(t)} (g\chi_{r_0/4}) - [\Delta,\chi_{r_0/4}]g + NL(g)\chi_{r_0/4},\label{eq: pde for g(t,r)chi}
\end{equation}
where
$$
    ||H_{\lambda(t)}(g \chi_{r_0/4})||_{L^2(B_{\delta}(0))} \geq c_1(H,\delta,Z) ||g \chi_{r_0/4}||_{\dot{H}^2(B_{\delta}(0))} 
$$
by coercivity and
\begin{align*}
    NL(g) &= r^{-D-2}[f(r^D W_{\lambda} + r^Dg) - f(r^DW_{\lambda}) - f'(r^DW_{\lambda})r^Dg] \\
    |NL(g)| &\leq D^2 r^{-D-2}|r^Dg|^2, \quad |f''(u)| \leq 2D^2 \\
     ||NL(g) \chi_{r_0/4}||_{L^2(B_{\delta}(0))} &= ||NL(g) \chi_{r_0/4}\chi_{r_0/2}||_{L^2(B_{\delta}(0))} \\
     &\leq D^2 ||(r^Dg)^{2-p} g^p \chi_{r_0/4}\chi_{r_0/2}||_{L^2(B_{\delta}(0))}, \quad p =  1 + 2/D\\
     &\leq D^2 (M\pi)^{2-p} || g^p \chi_{r_0/4}\chi_{r_0/2}||_{L^2(B_{\delta}(0))} \\
     &\leq D^2 (M\pi)^{2-p}C_S^{p} ||g \chi_{r_0/2}||^{p-1}_{\dot{H}^1(B_{\delta}(0))} ||g \chi_{r_0/4}||_{\dot{H}^2(B_{\delta}(0))}  \\
     &\leq c_1||g\chi_{r_0/4}||_{\dot{H}^2(B_{\delta}(0))}/2
\end{align*}
by Hölder's inequality and the Sobolev embeddings (\ref{eq: sobolev embedding}). Finally,
\begin{align*}
    |[\Delta,\chi_{r_0/4}]g| &= \left| \left( \chi_{r_0/4}''(r) + \frac{2D+1}{r} \chi_{r_0/4}'(r) \right) g + 2 \chi_{r_0/4}'(r)g_r \right| \\
    &\leq C(\chi,r_0,D) \left( \frac{|g \chi_{r_0/2}|}{r} +  |g_r| \right), \quad r \leq r_0/2 \\
    ||[\Delta,\chi_{r_0/4}]g||_{L^2(B_{\delta}(0))} &\leq C(\chi,r_0,D)C_R(D,\delta) ||g \chi_{r_0/2}||_{\dot{H}^1(B_{\delta}(0))} \\
    &\leq C(\delta,r_0,D,\chi,M,H,Z)
\end{align*}
using Rellich's inequality (\ref{eq: rellich inequ}) and our hypotheses on $g$.

\section{Modulation Estimates}
Assume that $g$ satisfies the hypotheses of the previous section on $[t_0,t_1]$. Further assume that $\lambda < 1$, $r_0/(4\lambda) > R_0$, 
\begin{equation}
    \left|\mathbb S^{2D+1} \right|(2D-2)^{-1}4D^2 \left( \frac{r_0}{4\lambda} \right)^{-(2D-2)} \leq ||\Lambda W||_{L^2(\mathbb R^{2D+2})}^2/4. \label{eq: definition of R_0}
\end{equation}
and
\begin{equation}
    ||g \chi_{r_0/2}||_{\dot{H}^1(B_{\delta}(0))} \leq \left( C_R \max_{\psi \in \mathcal{F} } ||r \psi||_{L^2(\mathbb R^{2D+2})}+1 \right)^{-1} \min\{1,||\Lambda W||_{L^2(B_{\delta}(0))}^2\}/4,  \label{eq: smallness of inner product for g not depending on n}
\end{equation}
where $\mathcal{F} = \{(\Lambda +2)\Lambda W, Z, 1_{D \geq 3} \Lambda W\}$ and $C_R$ is as in Lemma \ref{lemma: inner product inequalities with psi}.  Those hypotheses are satisfied for $g = \tilde{g}_n$ in Section \ref{sec:orbital stability}. Then one computes that:
\begin{theorem}\label{thm:modulation estimates}
Under the previous assumptions, if $t \in [t_0,t_1]$, 
    \begin{align}
    \left| \frac{\lambda_t}{\lambda} \right| &\leq C(\delta,r_0,D,\chi,M,H,Z) \left( \frac{|| g \chi_{r_0/2}||_{\dot{H}^2(B_{\delta}(0))} }{\lambda} + || g \chi_{r_0/2}||_{\dot{H}^2(B_{\delta}(0))}^2 \right), \label{eq: modulation, weaker form} \\
   \left| \frac{\lambda_t}{\lambda} +  \omega'(t) \right| &\leq C(\delta,r_0,D,\chi,M,H,Z) \cdot \left( 1 + ||g\chi_{r_0/2}||_{\dot{H}^2(B_{\delta}(0))}^2 \right), \label{eq: modulation, more precise}
\end{align}
where $\omega(t) \in C^1([t_0,t_1])$ is bounded by
$$
|\omega(t)| \leq C(\delta,r_0,D,\chi,M,H,Z)(1 + \sqrt{|\log(\lambda)|} \cdot \delta_{D = 2}).
$$
\end{theorem}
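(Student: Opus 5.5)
The plan is to differentiate orthogonality-type relations in time and use the equation (\ref{eq: pde for g(t,r)}) $u_t=-H_{\lambda}g+NL(g)$ together with $g_t=u_t+\frac{\lambda_t}{\lambda}(\Lambda W)_\lambda$. The identity $\partial_t W_{\lambda}=-\frac{\lambda_t}{\lambda}(\Lambda W)_\lambda$ follows from the definition of $\Lambda$. Throughout I use the scaling rules $\|f_\lambda\|_{L^2}=\lambda\|f\|_{L^2}$ in $\mathbb R^{2D+2}$ and $\langle f_\lambda,h_\lambda\rangle=\lambda^2\langle f,h\rangle$. Since $2\lambda R_0<r_0/2$, every pairing against $Z_\lambda$ or $(\Lambda W)_\lambda\chi$ near the origin sees only $g\chi_{r_0/2}$. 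All remainders are then controlled through Hardy–Rellich (\ref{eq: rellich inequ}) in the form $|\langle g\chi_{r_0/2},\lambda^{-2}\psi_\lambda\rangle|\le C_R\|g\chi_{r_0/2}\|_{\dot H^k}\,\lambda^{k-1}\|r^{k}\psi\|_{L^2}$, with $k=1$ or $k=2$.

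\emph{Weaker form (\ref{eq: modulation, weaker form}).} I differentiate $\langle g,Z_\lambda\rangle_{L^2(B_\delta)}=0$. Writing $\partial_t Z_\lambda=-\frac{\lambda_t}{\lambda}(\Lambda Z)_\lambda$, this gives
\[
\frac{\lambda_t}{\lambda}\Big[\langle (\Lambda W)_\lambda,Z_\lambda\rangle-\langle g,(\Lambda Z)_\lambda\rangle\Big]=\langle H_\lambda g-NL(g),Z_\lambda\rangle .
\]
The bracket equals $\lambda^2\big(\langle \Lambda W,Z\rangle+O(\|g\chi_{r_0/2}\|_{\dot H^1})\big)\ge\lambda^2/4$, by the choice of $Z$ and the smallness assumption (\ref{eq: smallness of inner product for g not depending on n}). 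For the linear term I use self-adjointness, $\langle H_\lambda g,Z_\lambda\rangle=\langle g\chi_{r_0/2},\lambda^{-2}(HZ)_\lambda\rangle$, which Rellich with $k=2$ bounds by $\lambda\|g\chi_{r_0/2}\|_{\dot H^2}$. For the nonlinear term, Cauchy–Schwarz and the $NL$ bound of the previous section give $\|NL(g)\chi\|_{L^2}\|Z_\lambda\|_{L^2}\lesssim\lambda\|g\chi_{r_0/2}\|_{\dot H^2}$. Alternatively one can use the pointwise bound $|NL|\lesssim r^{D-2}g^2$ to get the $\|g\|^2_{\dot H^2}$ contribution. Dividing by $\lambda^2$ yields (\ref{eq: modulation, weaker form}).

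\emph{Precise form (\ref{eq: modulation, more precise}).} The $1/\lambda$ loss above comes from $Z$ not lying in $\ker H$. So I would repeat the computation with the kernel element itself, suitably cut off, and set
\[
\omega(t):=\frac{\langle g,(\Lambda W)_\lambda\chi_{r_0/4}\rangle}{\lambda^2\,\|\Lambda W\|_{L^2(B_{r_0/(4\lambda)})}^2}.
\]
The denominator is bounded below uniformly, using $D\ge2$ and (\ref{eq: definition of R_0}). Rellich with $k=1$ gives $|\omega|\lesssim\|g\chi_{r_0/2}\|_{\dot H^1}\,\|r\Lambda W\|_{L^2(B_{r_0/\lambda})}$. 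This is $O(1)$ for $D\ge3$ and $O(\sqrt{|\log\lambda|})$ for $D=2$, since $r\Lambda W\sim r^{-3}$ in $\mathbb R^6$; this is the claimed bound on $\omega$. Differentiating the numerator produces $\frac{\lambda_t}{\lambda}\lambda^2\|\Lambda W\|^2_{L^2(B_{r_0/(4\lambda)})}$ from $g_t$, plus a term $\frac{\lambda_t}{\lambda}\langle g,((\Lambda+2)\Lambda W)_\lambda\chi\rangle$ coming from the time derivative of $\lambda^{-2}(\Lambda W)_\lambda$ and the normalization. The cancellation $(\Lambda+2)\Lambda W=O(\langle y\rangle^{-2D-2})$ makes $r(\Lambda+2)\Lambda W\in L^2(\mathbb R^{2D+2})$ uniformly, including for $D=2$. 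Hence this term is $\frac{\lambda_t}{\lambda}\lambda^2\cdot o(1)$ and can be absorbed by the smallness (\ref{eq: smallness of inner product for g not depending on n}). The same holds for the derivative of the denominator, which only involves the boundary layer $r\sim r_0/\lambda$ and is $O(\lambda^{2D-2})$. The main term is $\langle u_t,(\Lambda W)_\lambda\chi\rangle=\langle -H_\lambda g+NL(g),(\Lambda W)_\lambda\chi\rangle$. Since $H_\lambda(\Lambda W)_\lambda=0$, the linear part reduces to the commutator $\langle g,[\Delta,\chi_{r_0/4}](\Lambda W)_\lambda\rangle$, supported at $r\sim r_0$. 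There $(\Lambda W)_\lambda\sim\lambda^{D}r^{-2D}$, so this term is $O(\lambda^{D})\cdot\|g\chi_{r_0/2}\|_{H^1}$ and becomes $O(\lambda^{D-2})=O(1)$ after dividing by $\lambda^2$; this is where $D\ge2$ is used. The nonlinear part is bounded via $|NL|\le D^2r^{D-2}g^2$ and Hardy–Rellich by $\lambda^2\|g\chi_{r_0/2}\|_{\dot H^2}^2$, up to constants. Collecting terms gives $\frac{\lambda_t}{\lambda}(1+o(1))+\omega'=O(1+\|g\chi_{r_0/2}\|_{\dot H^2}^2)$. Moving the $o(1)\frac{\lambda_t}{\lambda}$ term to the right with (\ref{eq: modulation, weaker form}) would reintroduce a $1/\lambda$, so I would instead absorb it into $\omega'$ by dividing through, replacing $\omega$ by $\omega$ divided by the bracket.

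The main obstacle is this last step: handling the $\frac{\lambda_t}{\lambda}\langle g,((\Lambda+2)\Lambda W)_\lambda\rangle$ term without losing a factor $\lambda^{-1}$, and doing so in the borderline case $D=2$ where $\Lambda W\notin$ the weighted space. The fix I would try first is to exploit the improved decay of $(\Lambda+2)\Lambda W$ to bound that term by $\frac{\lambda_t}{\lambda}\lambda^2\cdot\mathrm{small}$ and absorb it into the leading coefficient. All remaining $\log\lambda$ growth is then pushed into $\omega$, which is why only $\omega$, and not the right-hand side, carries the $\sqrt{|\log\lambda|}$ factor.
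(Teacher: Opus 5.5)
Your weak estimate is correct and is essentially the paper's argument up to normalization. Your set-up of the refined estimate is also sound. In the paper's notation $N=\langle g\chi_{r_0/4},\lambda^{-2}(\Lambda W)_\lambda\rangle$, $D_1=-\langle g\chi_{r_0/4},\lambda^{-2}((\Lambda+2)\Lambda W)_\lambda\rangle$, $D_2=\langle\chi_{r_0/4}(\Lambda W)_\lambda,\lambda^{-2}(\Lambda W)_\lambda\rangle$, your $\omega$ is essentially $N/D_2$. Your commutator, nonlinear and $\partial_t D_2$ bounds, and the $\sqrt{|\log\lambda|}$ bound for $D=2$, are all fine.

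\textbf{The gap.} It is exactly the step you flag, and your proposed fix does not close it. Your differentiation gives $\omega'=\frac{\lambda_t}{\lambda}\frac{D_1+D_2}{D_2}+O(1+\|g\chi_{r_0/2}\|_{\dot H^2}^2)$. Dividing by the bracket amounts to working with $N/(D_1+D_2)$, whose derivative contains $-N\,\partial_t D_1/(D_1+D_2)^2$. But $\partial_t D_1$ is only $O(\|g\chi_{r_0/2}\|_{\dot H^2}/\lambda+\|g\chi_{r_0/2}\|_{\dot H^2}^2)$, for two reasons:
\begin{itemize}
\item it contains $\frac{\lambda_t}{\lambda}\langle\chi_{r_0/(4\lambda)}\Lambda W,(\Lambda+2)\Lambda W\rangle\approx\frac{\lambda_t}{\lambda}\|\Lambda W\|_{L^2}^2$, since $\Lambda^*=-(\Lambda+2)$;
\item it contains the $H_\lambda$-pairing against $(\Lambda+2)\Lambda W$, which is not in the kernel.
\end{itemize}
Since $N$ has no factor of $\lambda$ (it is merely small, and only $O(\sqrt{|\log\lambda|})$ when $D=2$), the $\lambda^{-1}$ loss has just moved into the correction term. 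Your justification also rests on a false claim. The bound $(\Lambda+2)\Lambda W=O(\langle y\rangle^{-2D-2})$ holds only for $D=2$, where $(\Lambda+2)\Lambda W=16(1+r^4)^{-2}$. For $D\ge3$ one has $(\Lambda+2)\Lambda W=(2-D)\Lambda W+R$ with $R=O(\langle y\rangle^{-2D-2})$.

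\textbf{The case $D=2$.} You need not divide at all. Since $r^2(\Lambda+2)\Lambda W\in L^2(\mathbb R^6)$, your own Rellich bound with $k=2$ gives $|D_1|\lesssim\lambda\|g\chi_{r_0/2}\|_{\dot H^2}$, and $k=1$ gives $|D_1|\lesssim 1$. Together with the weak estimate this yields $|\frac{\lambda_t}{\lambda}D_1|\lesssim\|g\chi_{r_0/2}\|_{\dot H^2}^2$. So this term simply goes to the right-hand side, contrary to your remark that doing so reintroduces a $1/\lambda$. This is the paper's argument, via $\frac{1}{D_1+D_2}=\frac{1}{D_2}-\frac{D_1}{D_2(D_1+D_2)}$ and $\omega=-N/D_2$.

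\textbf{The case $D\ge3$.} The non-decaying part of $D_1$ is proportional to $N$ itself: $D_1=(D-2)N-\rho$, with $\rho=\langle g\chi_{r_0/4},\lambda^{-2}R_\lambda\rangle=O(\lambda\|g\chi_{r_0/2}\|_{\dot H^2})$. Hence $N\partial_t D_1=\frac{1}{D-2}(D_1\partial_t D_1+\rho\,\partial_t D_1)$.
\begin{itemize}
\item After division by $(D_1+D_2)^2$, the first piece equals $\partial_t h(D_1/D_2)$ up to a harmless multiple of $\partial_t D_2$, where $h(A)=\int_0^A a(1+a)^{-2}\,da$.
\item In the second piece, the factor $\lambda$ in $\rho$ cancels the $\lambda^{-1}$.
\end{itemize}
Equivalently, in your normalization the bracket is $1+(D-2)\omega$ up to acceptable errors. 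One can then integrate $\omega'/(1+(D-2)\omega)=\frac{1}{D-2}\partial_t\log(1+(D-2)\omega)$, which is a total derivative. Without one of these structural observations, the refined modulation estimate is not established.
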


The proof follows the strategy of \cite{kim2025classificationglobaldynamicsenergycritical}, Section 5. The weaker modulation estimate for $D = 2$ is new and will allow to extend the nonexistence result to $D \geq 2$.
\begin{proof}
Observe that
$$
\partial_t \langle g , \lambda^{-2} \psi_{\lambda} \rangle = \langle g_t, \lambda^{-2} \psi_{\lambda} \rangle - \frac{\lambda_t}{\lambda} \langle g, \lambda^{-2} (\Lambda + 2) \psi_{\lambda} \rangle, \quad \partial_t W_{\lambda} = -\frac{\lambda_t}{\lambda} (\Lambda W)_{\lambda}
$$
for integrable $\psi$. We go back to the equation for $g\chi_{r_0/4}$ :
$$
\chi_{r_0/4}  \partial_t  W_{\lambda(t)} + g_t\chi_{r_0/4} = - H_{\lambda(t)} (g\chi_{r_0/4}) - [\Delta,\chi_{r_0/4}]g + NL(g)\chi_{r_0/4}
$$
and apply $\langle \cdot, \psi_{\lambda}\rangle$. One gets 
\begin{align}
    \frac{\lambda_t}{\lambda} \langle g \chi_{r_0/4}, \lambda^{-2} (\Lambda + 2) \psi_{\lambda} \rangle &-  \frac{\lambda_t}{\lambda} \langle   \chi_{r_0/4}(\Lambda W)_{\lambda}, \lambda^{-2} \psi_{\lambda} \rangle = - \partial_t \langle g \chi_{r_0/4}, \lambda^{-2} \psi_{\lambda} \rangle \notag \\
    &+ \langle -H_{\lambda}(g\chi_{r_0/4})   - [\Delta,\chi_{r_0/4}]g + NL(g)\chi_{r_0/4} , \lambda^{-2} \psi_{\lambda} \rangle   \label{eq: modulation estimate, general case}
\end{align}
 with the special case,
\begin{align}
     \frac{\lambda_t}{\lambda}\large(  \langle g\chi_{r_0/4}, \lambda^{-2} (\Lambda + 2) (\Lambda W)_{\lambda} \rangle  &-   \langle \chi_{r_0/4} (\Lambda W)_{\lambda}, \lambda^{-2} (\Lambda W)_{\lambda} \rangle \large) = - \partial_t \langle g\chi_{r_0/4}, \lambda^{-2} (\Lambda W)_{\lambda} \rangle  \notag
     \\
     &+ \langle - [\Delta,\chi_{r_0/4}]g + NL(g)\chi_{r_0/4} , \lambda^{-2} (\Lambda W)_{\lambda} \rangle.  \label{eq: modulation estimate, crucial case}
\end{align}
\begin{lemma} \label{lemma: inner product inequalities with psi}
 Let $|\psi| \leq A_{\psi} \langle y \rangle^{-2D}$ (e.g. $\psi \in \{Z,\Lambda W, (\Lambda + 2)\Lambda W, (\Lambda + 2)^2\Lambda W\}$) and similarly for $\tilde{\psi}$. Let $B_{\chi}^{(n)}$ be as in (\ref{eq: upper bound cut-off}), $c_2 = c_2(H,B_{\delta},Z)$ be as in the coercivity result (Proposition \ref{prop:coercivity}), 
 $$
I(D) = |S^{2D+1}| \int_0^{+\infty} \langle s \rangle^{-4D} s^{2D+1}ds + \sup_{r > 0} \frac{r^D}{(1+r)^{2D}}< +\infty.
 $$
 Let also $C_R(D,\delta) > 1$ be a constant for which Rellich's inequality holds on $\dot{H}^1_0(B_{\delta}(0))$ and $\dot{H}^2_0(B_{\delta}(0))$.
 
Assume $r_0 \leq \delta/2 < 1$ and $g \chi_{r_0/2} \in H^2(B_{\delta}(0))$. Then:
\begin{align*}
    \langle H_{\lambda}(g\chi_{r_0/4}), (\Lambda W)_{\lambda} \rangle &= 0, \quad |\langle \chi_{r_0/4} \psi_{\lambda}, \tilde{\psi}_{\lambda} \rangle| \leq \lambda^2 I(D)A_{\psi} A_{\tilde{\psi}} \\
    |\langle H_{\lambda}(g\chi_{r_0/4}), \lambda^{-2} \psi_{\lambda} \rangle |  &\leq  ||H_{\lambda}(g\chi_{r_0/4})||_{L^2(B_{\delta}(0))} \cdot || \lambda^{-2} \psi_{\lambda}||_{L^2(B_{\delta}(0))} \\
    &\leq c_2 I(D)^{1/2} A_{\psi} ||g\chi_{r_0/4}||_{\dot{H}^2(B_{\delta}(0))} \lambda^{-1} \\
    &\leq 4 B_{\chi}^{(2)} (r_0/4)^{-2} c_2 I(D)^{1/2} A_{\psi} ||g\chi_{r_0/2}||_{\dot{H}^2(B_{\delta}(0))} \lambda^{-1} \\
    |\langle NL(g)\chi_{r_0/4}, \lambda^{-2} \psi_{\lambda} \rangle| &\leq D^2 A_{\psi} \int_{|x| \leq r_0/2} r^{-D-2}|r^Dg|^2 \lambda^{-D-2} (1 + r/\lambda)^{-2D} dx \\
    &\leq D^2 A_{\psi} I(D) \int_{B_{\delta}(0)} r^{-4}|g|^2\chi_{r_0/2}dx \\
    &\leq D^2 A_{\psi} I(D) ||r^{-2}g\chi_{r_0/2}||_{L^2(B_{\delta}(0))}^2 \\
    &\leq D^2 A_{\psi} I(D)C_R^2||g \chi_{r_0/2}||_{\dot{H}^2(B_{\delta}(0))}^2 \\
    |\langle [\Delta,\chi_{r_0/4}]g, \lambda^{-2} \psi_{\lambda} \rangle| &= |\langle (\Delta \chi_{r_0/4}) g + 2 \nabla \chi_{r_0/4} \cdot \nabla g, \lambda^{-2} \psi_{\lambda} \rangle|\\
    &\leq 2A_{\psi} B_{\chi}^{(2)}(D+1)(r_0/2)^{-2} \\
    &\cdot \int_{r_0/4 \leq |x| \leq r_0/2} (|g| + |g_r|)  \lambda^{-D-2} (1 + r/\lambda)^{-2D} dx \\
     &\leq 2A_{\psi} B_{\chi}^{(2)}(D+1)(r_0/2)^{-(D+4)} \\
     &\cdot \int_{r_0/4 \leq |x| \leq r_0/2}  r^{D+2}\left(\frac{|g|}{r^2} + \frac{|g_r|}{r}\right)  \lambda^{-D-2} (1 + r/\lambda)^{-2D} dx \\
          &\leq 2A_{\psi} B_{\chi}^{(2)}(D+1)(r_0/2)^{-(D+4)}I(D) \\
          &\cdot \int_{r_0/4 \leq |x| \leq r_0/2}  r^{D+2}\left(\frac{|g|}{r^2} + \frac{|g_r|}{r}\right)  dx \\
           &\leq 4A_{\psi} B_{\chi}^{(2)}(D+1)(r_0/2)^{-(D+4)}I(D)C_R \\
           &\cdot ||r^{D+2}||_{L^2(r_0/4 \leq |x| \leq r_0/2)} \cdot ||g \chi_{r_0/2}||_{\dot{H}^2(B_{\delta}(0))}.
\end{align*}
Similarly, if $\psi$ decays like $|\psi| \leq A_{\psi} \langle y \rangle^{-D-2-\varepsilon}$ (e.g., $|\psi| \lesssim \langle y\rangle^{-2D}$ with $D \geq 3$ or $\psi$ is compactly supported), then
\begin{align}
        |\langle g \chi_{r_0/4}, \lambda^{-2}\psi_{\lambda} \rangle| &\leq \int_{|x| \leq r_0/2} |g| \lambda^{-D-2} \psi(r/\lambda)dx \notag \\
     &\leq   ||r^{-1} g \chi_{r_0/2}||_{L^2(B_{\delta}(0))} \cdot ||r \lambda^{-2} \psi_{\lambda} ||_{L^2(B_{\delta}(0))} \notag \\
    &\leq C_R || g \chi_{r_0/2}||_{\dot{H}^1(B_{\delta}(0))} \cdot ||r \psi ||_{L^2(\mathbb R^{2D+2})}  \label{eq: estimate on g chi lambda^-2 psi}
\end{align}
\end{lemma}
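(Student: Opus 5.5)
The lemma collects several elementary estimates, each of which I will prove by a change of variables $x=\lambda y$ combined with Cauchy--Schwarz, Hardy--Rellich (\ref{eq: rellich inequ}) and the coercivity constant $c_2$ from Proposition \ref{prop:coercivity}.

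\textbf{Orthogonality identity.} For $\langle H_{\lambda}(g\chi_{r_0/4}), (\Lambda W)_{\lambda}\rangle = 0$, I use that $H_\lambda$ is symmetric on $L^2(B_\delta(0))$. The function $g\chi_{r_0/4}$ and its derivatives vanish near $\partial B_\delta$, and it lies in $H^2$, so two integrations by parts produce no boundary terms. The only point needing care is the origin: since $D\geq 2$ and $g\chi_{r_0/2}\in H^2$, the boundary fluxes $r^{2D+1}(\partial_r h\,\Lambda W - h\,\partial_r\Lambda W)$ vanish as $r\to 0$ by Hardy. Then $H_\lambda(\Lambda W)_\lambda = \lambda^{-2}(H\Lambda W)_\lambda = 0$.

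\textbf{Scaling bounds.} For $|\langle \chi_{r_0/4}\psi_\lambda,\tilde\psi_\lambda\rangle|$, I bound $|\chi|\le 1$ and substitute, which gives
$$\lambda^{2D+2}\lambda^{-2D}\int\langle y\rangle^{-4D}dy = \lambda^2 |S^{2D+1}|\int_0^\infty \langle s\rangle^{-4D}s^{2D+1}ds,$$
and the integral converges since $4D>2D+2$. Similarly $\|\lambda^{-2}\psi_\lambda\|_{L^2}^2 \le \lambda^{-4}\lambda^{2D+2}\lambda^{-2D}A_\psi^2 I(D) = \lambda^{-2}A_\psi^2 I(D)$. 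Together with $\|H_\lambda h\|\le c_2\|h\|_{\dot H^2}$, this gives the $\lambda^{-1}$ bound. To pass from $g\chi_{r_0/4}$ to $g\chi_{r_0/2}$, I write $g\chi_{r_0/4} = (g\chi_{r_0/2})\chi_{r_0/4}$ and expand $\Delta$ of the product. The derivative terms of $\chi_{r_0/4}$ carry factors $(r_0/4)^{-1}$ and $(r_0/4)^{-2}$, and on the annulus $r\sim r_0$ the lower-order terms are controlled by Hardy--Rellich. This yields the stated constant, possibly after adjusting numerical factors; the constants are not sharp anyway.

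\textbf{Nonlinear and commutator terms.} For the nonlinear term I use $|NL(g)|\le D^2 r^{-D-2}|r^Dg|^2$, so that $|NL(g)| \le D^2 r^{D-2}g^2$. I then need $\lambda^{-D-2}r^{D+2}(1+r/\lambda)^{-2D} \le \sup_s s^{D+2}(1+s)^{-2D}$, with $s=r/\lambda$. This is bounded because $D\geq 2$ gives $D+2\le 2D$; I take this to be what the $\sup$ term in $I(D)$ encodes (with the paper's exponent normalisation). The result is $\int r^{-4}g^2\chi_{r_0/2}$, which Rellich bounds by $C_R^2\|g\chi_{r_0/2}\|_{\dot H^2}^2$.

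For the commutator, its support lies in $r_0/4\le r\le r_0/2$ and $|\nabla\chi|,|\Delta\chi|$ are bounded by $B^{(2)}_\chi$ times powers of $r_0^{-1}$. The weight $\lambda^{-D-2}(1+r/\lambda)^{-2D}$ is bounded on this annulus by the same sup. Cauchy--Schwarz against $r^{D+2}$ together with Hardy/Rellich for $r^{-2}g$ and $r^{-1}g_r$ then gives the bound.

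\textbf{Final estimate (\ref{eq: estimate on g chi lambda^-2 psi}).} I apply Cauchy--Schwarz with $r^{-1}g$ against $r\lambda^{-2}\psi_\lambda$. The $L^2$ norm of the latter equals $\|r\psi\|_{L^2}$ by scaling: the factors are $\lambda^{-4}\lambda^{2}\lambda^{2D+2}\lambda^{-2D} = 1$, after accounting for the $\lambda^{-D}$ built into $\psi_\lambda$. This norm is finite exactly when $r\langle r\rangle^{-D-2-\varepsilon}\in L^2(\mathbb R^{2D+2})$, which is the stated decay condition.

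The only delicate step is the vanishing of the boundary terms at the origin in the symmetry identity, and that is where the hypothesis $D\ge 2$ is used.
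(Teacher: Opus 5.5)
Your proposal is correct and follows essentially the same route as the paper, whose argument is just the displayed chain of rescalings $x=\lambda y$, Cauchy--Schwarz, Hardy--Rellich and the upper coercivity constant $c_2$. You also rightly observe that the nonlinear step actually needs $\sup_{s>0} s^{D+2}(1+s)^{-2D}<\infty$, which is where $D\geq 2$ enters, rather than the supremum written in $I(D)$.

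One correction to your closing remark: the identity $\langle H_\lambda(g\chi_{r_0/4}),(\Lambda W)_\lambda\rangle=0$ needs no care at the origin and holds for every $D$. This is because $(\Lambda W)_\lambda$ is smooth on $\mathbb R^{2D+2}$ and $g\chi_{r_0/4}\in H^2$ is compactly supported in $B_\delta(0)$. The hypothesis $D\geq 2$ is used instead through Rellich's inequality in dimension $2D+2\geq 5$ and through the supremum above.
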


Using Lemma \ref{lemma: inner product inequalities with psi}, we first prove (\ref{eq: modulation, weaker form}). Consider (\ref{eq: modulation estimate, general case}) with $\psi = Z$, which is compactly supported. Observe that $\partial_t \langle g \chi_{r_0/4}, Z_{\lambda} \rangle = 0$ by orthogonality.  Assuming (\ref{eq: smallness of inner product for g not depending on n}), it holds that
$$
| \langle g\chi_{r_0/4},\lambda^{-2} (\Lambda + 2) Z_{\lambda} \rangle| \leq  1/4
$$ 
by (\ref{eq: estimate on g chi lambda^-2 psi}). Furthermore, if  $r_0/(4\lambda) > R_0$, by definition of $Z = ||\Lambda W||^{-1}_{L^2(\mathbb R^{2D+2})}(\Lambda W) \chi_{r \leq R_0}$ with $R_0 \gg 1$ chosen so that $\langle Z, \Lambda W \rangle_{L^2(\mathbb R^{2D+2})} \geq 1/2$, we deduce that
\begin{align*}
    \left|   \frac{\lambda_t}{\lambda}  \right| \cdot |\langle g \chi_{r_0/4}, (\Lambda + 2) Z_{\lambda} \rangle -  \langle   \chi_{r_0/4}(\Lambda W)_{\lambda}, Z_{\lambda} \rangle| &\geq   \left|   \frac{\lambda_t}{\lambda}  \right| \left(\frac{1}{2} - | \langle g \chi_{r_0/4}, (\Lambda + 2) Z_{\lambda} \rangle | \right) \lambda^2\\
      &\geq  \frac{1}{4} \cdot \left|   \frac{\lambda_t}{\lambda}  \right| \cdot \lambda^2.
\end{align*}
Lemma \ref{lemma: inner product inequalities with psi} applied with $\psi = Z$ implies that the commutator part, the nonlinear part and the $H_{\lambda}$ part are upper bounded by a term
$$
  C(\delta,r_0,D,\chi,M,H,Z)\left( \frac{||g \chi_{r_0/2}||_{\dot{H}^2}}{\lambda} + ||g \chi_{r_0/2}||_{\dot{H}^2}^2 \right). 
$$
Plugging those in (\ref{eq: modulation estimate, general case}) proves (\ref{eq: modulation, weaker form}).

 Next, we prove (\ref{eq: modulation, more precise}).  Assuming (\ref{eq: smallness of inner product for g not depending on n}), it holds that
\begin{equation}
    | \langle g\chi_{r_0/4},\lambda^{-2} (\Lambda + 2) (\Lambda W)_{\lambda} \rangle| \leq   ||\Lambda W||_{L^2(B_{\delta}(0))}^2/2 \label{eq:D_1 bounded}
\end{equation}
by (\ref{eq: estimate on g chi lambda^-2 psi}). This also holds for $D = 2$ as $(\Lambda + 2)\Lambda W$ has an improved $\langle y \rangle^{-2D-2}$ decay in that specific equivariance class. Moreover,
\begin{align*}
    \langle \chi_{r_0/4} (\Lambda W)_{\lambda}, \lambda^{-2} (\Lambda W)_{\lambda} \rangle &= ||\Lambda W||_{L^2(\mathbb R^{2D+2})}^2 - \int_{\mathbb R^{2D+2}}(1- \chi_{r_0/(4\lambda)}) |\Lambda W|^2 dy, \\
    \int_{\mathbb R^{2D+2}}|1- \chi_{r_0/(4\lambda)}| \cdot |\Lambda W|^2 dy &\leq  \left|\mathbb S^{2D+1} \right|\int_{r \geq r_0/(4\lambda)} \frac{4D^2}{(1+r^{2D})^2}r^{2D+1} dr \\
    &\leq  \left|\mathbb S^{2D+1} \right| (2D-2)^{-1} 4D^2 \left( \frac{r_0}{4\lambda} \right)^{-(2D-2)} \\
    &\leq  ||\Lambda W||_{L^2(\mathbb R^{2D+2})}^2/4.
\end{align*}
Hence, the fraction 
$$
\frac{1}{\langle \chi_{r_0/4} (\Lambda W)_{\lambda}, \lambda^{-2} (\Lambda W)_{\lambda} \rangle-\langle g\chi_{r_0/4},\lambda^{-2} (\Lambda + 2) (\Lambda W)_{\lambda} \rangle } \leq \frac{4}{ ||\Lambda W||_{L^2(B_{r_0}(0))}^2},
$$
is well-defined. Multiplying by this fraction in (\ref{eq: modulation estimate, crucial case}), we have proved so far that
\begin{align}
   \left| \frac{\lambda_t}{\lambda} + \tilde{\omega}(t) \right| &\leq C(\Lambda W, r_0) \cdot |\langle - [\Delta,\chi_{r_0/4}]g + NL(g)\chi_{r_0/4} , \lambda^{-2}(\Lambda W)_{\lambda} \rangle | \notag \\
   &\leq  C(\delta,r_0,D,\chi,M,H,Z) ||g \chi_{r_0/2}||_{\dot{H}^2}^2, \label{eq: bound on N(t)} \\
   \tilde{\omega}(t) &:= -\frac{\partial_t \langle g \chi_{r_0/4},\lambda^{-2} (\Lambda W)_{\lambda} \rangle}{ \langle \chi_{r_0/4} (\Lambda W)_{\lambda}, \lambda^{-2} (\Lambda W)_{\lambda} \rangle -\langle g\chi_{r_0/4}, \lambda^{-2} (\Lambda + 2) (\Lambda W)_{\lambda} \rangle }, \notag
\end{align}
 as Lemma \ref{lemma: inner product inequalities with psi} applied with $\psi = (\Lambda W)$ implies that the commutator and nonlinear parts are upper bounded. 
 
 We focus on $D \geq 3$ first. In order to prove (\ref{eq: modulation, more precise}), it suffices to show that $\tilde{\omega}(t) = \partial_t \mathcal{O}(1) + \mathcal{O}(||g \chi_{r_0/2}||_{\dot{H}^2}^2 + 1 )$. 
 
 Let $(\Lambda + 2)\Lambda W = (2-D)\Lambda W + R$, $R \lesssim \langle y \rangle^{-2D-2}$ when $D \geq 3$ and $h : [-2/3,2/3] \to \mathbb R$, $|h(A)| \lesssim A^2$,
$$
h(A) = \int_{0}^A \frac{a}{(1+ a)^2}da.
$$
Letting
\begin{align*}
    N(t) &= \langle g \chi_{r_0/4}, \lambda^{-2}(\Lambda W)_{\lambda} \rangle \\
    D_1(t) &= -\langle g \chi_{r_0/4}, \lambda^{-2} (\Lambda + 2) (\Lambda W)_{\lambda} \rangle, \quad D_2(t) = \langle \chi_{r_0/4} (\Lambda W)_{\lambda}, \lambda^{-2} (\Lambda W)_{\lambda} \rangle,
\end{align*}
one has
\begin{align*}
    \frac{\partial_t N(t)}{D_1(t) + D_2(t)} &= \partial_t \left[ \frac{N(t)}{D_1(t) + D_2(t)} + \frac{1}{D-2} h \left( \frac{D_1(t)}{D_2(t)} \right) \right] \\
    &+ \frac{\langle g \chi_{r_0/4}, \lambda^{-2}R_{\lambda} \rangle  \partial_t D_1(t)}{(D-2)(D_1(t) + D_2(t))^2} \\
    &+ \frac{  \partial_t D_2(t)}{(D-2)(D_1(t) + D_2(t))^2} \left[ \langle g \chi_{r_0/4}, \lambda^{-2}R_{\lambda} \rangle   +D_1(t) + \frac{D_1(t)^2}{D_2(t)} \right] \\
    &= (I) + (II) + (III),
\end{align*}
where
\begin{align}
    |N(t)| = |\langle g \chi_{r_0/4}, \lambda^{-2}(\Lambda W)_{\lambda} \rangle| &\lesssim_{\Lambda W, D, \delta} || g \chi_{r_0/2}||_{\dot{H}^1(B_{\delta}(0))} \leq  C(\delta,r_0,D,\chi,M,H,Z)  \label{eq: upper bound on innerproduct of g chi r_0/4}
\end{align}
by (\ref{eq: estimate on g chi lambda^-2 psi}). So the first term $(I)$ is $\partial_t \mathcal{O}(1)$.

As for (II), we first deal with $\partial D_1(t)$. Consider  (\ref{eq: modulation estimate, general case}) applied with $\psi = (\Lambda + 2)(\Lambda W)$, together with Lemma \ref{lemma: inner product inequalities with psi} applied with $\psi = (\Lambda + 2)(\Lambda W)$. We deduce that
\begin{align*}
    |\partial_t D_1(t)| &\leq \left| \frac{\lambda_t}{\lambda} \right| \left( ||\Lambda W||_{L^2(\mathbb R^{2D+2})}^2 + C_R || g \chi_{r_0/2}||_{\dot{H}^1(B_{\delta}(0))} \cdot ||r \psi ||_{L^2(\mathbb R^{2D+2})} \right)\\
&+  C(\delta,r_0,D,\chi,M,H,Z)\left( \frac{||g \chi_{r_0/2}||_{\dot{H}^2}}{\lambda} + ||g \chi_{r_0/2}||_{\dot{H}^2}^2 \right). 
\end{align*}
which proves
$$
 |\partial_t D_1(t)| \leq   C(\delta,r_0,D,\chi,M,H,Z) \left( \frac{||g \chi_{r_0/2}||_{\dot{H}^2}}{\lambda} + ||g \chi_{r_0/2}||_{\dot{H}^2}^2 \right)
$$
using (\ref{eq: smallness of inner product for g not depending on n}) and (\ref{eq: modulation, weaker form}). After multiplying by $\langle g \chi_{r_0/4}, \lambda^{-2}R_{\lambda} \rangle$ in (II), the $\lambda^{-1}$ factor is cancelled out. Indeed, observe that 
\begin{align}
   | \langle g \chi_{r_0/4}, \lambda^{-2}R_{\lambda} \rangle |&\lesssim_{R} \int_{|x| \leq r_0/4} |g| \lambda^{-D-2} (1 + r/\lambda)^{-2D-2} dx \notag \\
    &\lesssim_{R} \lambda ||r^{-2} g \chi_{r_0/2}||_{L^2(B_{\delta}(0))} \cdot ||r^2 \lambda^{-3} (1+r/\lambda)^{-2D-2} ||_{L^2(B_{\delta}(0))} \notag \\
    &\lesssim_{R, D, H, \delta} \lambda || g \chi_{r_0/2}||_{\dot{H}^2(B_{\delta}(0))}  \notag \\
   | \langle g \chi_{r_0/4}, \lambda^{-2}R_{\lambda} \rangle |&\lesssim_{R} \int_{|x| \leq r_0/4} |g| \lambda^{-D-2} (1 + r/\lambda)^{-2D-2} dx \notag \\
    &\lesssim_{R} \lambda ||r^{-1} g \chi_{r_0/2}||_{L^2(B_{\delta}(0))} \cdot ||r \lambda^{-3} (1+r/\lambda)_{\lambda}^{-2D-2} ||_{L^2(B_{\delta}(0))} \notag \\
    &\lesssim_{R, D, H, \delta} 1 \label{eq: inner product with R estimate}
\end{align}
thanks to the improved decay of $R$, as well as (\ref{eq: smallness of inner product for g not depending on n}). This shows that 
$$
|\langle g \chi_{r_0/4}, \lambda^{-2}R_{\lambda} \rangle| \left( \frac{||g \chi_{r_0/2}||_{\dot{H}^2}}{\lambda} + ||g \chi_{r_0/2}||_{\dot{H}^2}^2 \right) \leq  C \cdot \left(  ||g \chi_{r_0/2}||_{\dot{H}^2}^2 + ||g \chi_{r_0/2}||_{\dot{H}^2}^2 \right).
$$
Hence, (II) is $\mathcal{O}( ||g \chi_{r_0/2}||_{\dot{H}^2}^2)$.

Finally, note that the only problematic term in (III) is $\partial_t D_2(t)$, as the other terms are uniformly bounded by a constant, see (\ref{eq:D_1 bounded}) and (\ref{eq: inner product with R estimate}). A direct computation, together with (\ref{eq: modulation, weaker form}), shows that
\begin{equation}
    |\partial_t D_2(t)| \lesssim_{r_0, D, \chi} \lambda^{2D-2} \left| \frac{\lambda_t}{\lambda} \right| \lesssim_{r_0, D, \chi}  \lambda^{2D-2} \left( \frac{||g \chi_{r_0/2}||_{\dot{H}^2}}{\lambda} + ||g \chi_{r_0/2}||_{\dot{H}^2}^2 \right), \label{eq: upper bound partial_t D_2}
\end{equation}
which implies that (III) is $\mathcal{O}(||g \chi_{r_0/2}||_{\dot{H}^2}^2 + 1 )$.

In the $D=2$ case, we cannot divide by $(D-2)$. However, $(\Lambda + 2)\Lambda W = R$ has an improved decay rate that we can take advantage of. Let
$$
E(t) = \frac{\lambda_t}{\lambda} + \tilde{\omega}(t).
$$
Then
\begin{align*}
    |\partial_t N(t)| &=\left| (D_1(t) + D_2(t)) \left(E(t) - \frac{\lambda_t}{\lambda}\right)\right| \\
&\leq C(\delta,r_0,D,\chi,M,H,Z)\left( \frac{||g \chi_{r_0/2}||_{\dot{H}^2}}{\lambda} + ||g \chi_{r_0/2}||_{\dot{H}^2}^2 \right)
\end{align*}
using (\ref{eq: modulation, weaker form}), (\ref{eq:D_1 bounded}) and (\ref{eq: bound on N(t)}). Write
$$
\frac{\partial_t N(t)}{D_1(t) + D_2(t)} = \frac{\partial_t N(t)}{D_2(t)} - \frac{D_1(t) \partial_t N(t)}{D_2(t)(D_1(t) + D_2(t))} = (I) + (II).
$$
Observing that $D_1(t) = -\langle g \chi_{r_0/4},\lambda^{-2}(\Lambda + 2)(\Lambda W)_{\lambda} \rangle$ when $D = 2$, (II) is $\mathcal{O}( ||g \chi_{r_0/2}||_{\dot{H}^2}^2)$ by (\ref{eq: inner product with R estimate}). Next, write
$$
(I) = \frac{\partial_t N(t)}{D_2(t)} = \partial_t \left[ \frac{N(t)}{D_2(t)} \right] + \frac{N(t) \partial_t D_2(t)}{D_2(t)^2}.
$$
When $D = 2$, one cannot use   (\ref{eq: upper bound on innerproduct of g chi r_0/4}). Instead, we have the following logarithmic estimate
\begin{align*}
    |N(t)| = |\langle g \chi_{r_0/4}, \lambda^{-2}(\Lambda W)_{\lambda} \rangle| &\lesssim_{\Lambda W, D, \delta} || g \chi_{r_0/2}||_{\dot{H}^1(B_{\delta}(0))} \cdot ||r \Lambda W||_{L^2(B_{\lambda^{-1} \delta}(0))} \\
     &\lesssim_{\Lambda W, D, Z, \delta, r_0}  \sqrt{1 + |\log(\lambda)|}
\end{align*}
using (\ref{eq: smallness of inner product for g not depending on n}). Then 
$$
\partial_t \left[ \frac{N(t)}{D_2(t)} \right]  = \partial_t O(\sqrt{1 + |\log(\lambda)|}),
$$
while 
\begin{align*}
    \frac{N(t) \partial_t D_2(t)}{D_2(t)^2} &= \sqrt{1 + |\log(\lambda)|} \mathcal{O} \left[  \lambda^{2} \left( \frac{||g \chi_{r_0/2}||_{\dot{H}^2}}{\lambda} + ||g \chi_{r_0/2}||_{\dot{H}^2}^2 \right) \right]  \\
    &=  \ \mathcal{O} \left[   \left(||g \chi_{r_0/2}||_{\dot{H}^2} + ||g \chi_{r_0/2}||_{\dot{H}^2}^2 \right) \right] =   \mathcal{O} \left[   1 + ||g \chi_{r_0/2}||_{\dot{H}^2}^2  \right] 
\end{align*}
using $\lambda \sqrt{1 + |\log(\lambda)|} \lesssim 1$ when $\lambda < 1$ and (\ref{eq: upper bound partial_t D_2}).  This finishes the proof.
\end{proof}

\section{Applying modulation estimates: a conditional contradiction}\label{sec: applying modulation}
Under the previous assumptions, the modulation estimates  (\ref{eq: modulation, weaker form}) and (\ref{eq: modulation, more precise}), together with (\ref{eq: upper bound of g(t) in terms of Delta u}) and (\ref{eq: upper bound of g(t) in terms of u_t}), rewrite as
\begin{align*}
        \left| \log(\lambda(t))-\log(\lambda(s)) \right| &\leq I(t,s) + C \int_{t}^{s}  \frac{|| g(a)\chi_{r_0/2}||_{\dot{H}^2(B_{\delta}(0))} }{\lambda(a)}da, \quad t,s \in [t_0,t_1] , \notag \\
        I(t,s) &= \left| \int_t^s 1 + ||g(a) \chi_{r_0/2}||_{\dot{H}^2(B_{\delta}(0))}^2 da \right| = o_{t \to T, s \to T}(1)
\end{align*}
and
\begin{align*}
       \left| \log(\lambda(t))-\log(\lambda(s)) \right| &\leq |\omega(t)-\omega(s)| + C\int_{t}^{s}1 + ||g(a)\chi_{r_0/2}||^2_{\dot{H}^2(B_{\delta}(0))} da \notag \\
        &\leq |\omega(t)-\omega(s)|  +  o_{t \to T,s \to T}(1),
\end{align*}
where the small $o(\cdot)$ function, as well as $C$, depend on $u,\delta,r_0,D,\chi,M,H,Z$, but not on the interval, the scale $\lambda(t)$ or the remainder $g$.

Recall the profile decomposition for $v$ and $u$ along a sequence of times combined with the single-bubble result:
\begin{equation}
     \sup_{t \in [t_0^{(n)},t_1^{(n)}]} ||u(\cdot,t) - u_*(\cdot) -  W_{\lambda_0^{(n)}}||_{H^1(B_{\delta}(0))} \to 0, \label{eq: single bubble}
\end{equation}
where we used the normalization $v(0,t) = 0$, $t < T$, $v(0,T) = \pi$ and the single-bubble result. On each interval, one obtains from Orbital Stability (Section \ref{sec:orbital stability}) a $C^1$-scale $\lambda_n(t)$ (technically, it should be $\lambda_n(u(t))$) for which $|\lambda_n(t)| \sim \lambda_0^{(n)} \to 0$ and a remainder $\tilde{g}_n$ bounded by a multiple of $\pi$, $u = W_{\lambda_n} + \tilde{g}_n$, satisfying the orthogonality condition $$
\langle \tilde{g}_n(t,\cdot), Z_{\lambda_n}(t)\rangle_{L^2(B_{\delta}(0))} = 0
$$
and the estimates (\ref{eq: smallness of inner product with g_n}).

For each $n$, one deduces a $L^2$-estimate for $\tilde{g}_n$ which is independent of $n$. Letting $\tilde{g}(s)$ (resp. $\lambda(t)$) be defined as $\tilde{g}_n(s)$ (resp. $\lambda_n(t)$) on $[t_0^{(n)},t_1^{(n)}]$, then
$$
||\tilde{g}\chi_{r_0/2}||_{\dot{H}^2(B_{\delta}(0))} \in L^2_t\left( \bigcup_{n} [t_0^{(n)},t_1^{(n)}] \right), \quad I(t,s) = o_{\substack{t,s \in \cup_n [t_0^{(n)},t_1^{(n)}] \\ t \to T, s \to T } }(1)
$$
by (\ref{eq: upper bound of g(t) in terms of Delta u}), (\ref{eq: upper bound of g(t) in terms of u_t}), and one deduces
\begin{align}
        \left| \log(\lambda(t))-\log(\lambda(s)) \right| &\leq I(t,s) + C \int_{t}^{s}  \frac{|| \tilde{g}(a)\chi_{r_0/2}||_{\dot{H}^2(B_{\delta}(0))} }{\lambda(a)}da  \label{eq: modulation estimates weaker} \\ 
         \left| \log(\lambda(t))-\log(\lambda(s)) \right| &\leq |\omega(t)-\omega(s)| +  o_{t \to T,s \to T}(1) \label{eq: modulation estimates improved}
\end{align}
whenever $t, s \in [t_0^{(n)},t_1^{(n)}]$. Again, the small $o(\cdot)$ function, as well as $C$, depend on $u,\delta,r_0,D,\chi,M,H,Z$, but not on the interval, the scale $\lambda(t)$ or the remainder $\tilde{g}$.

The problem is that these functions only exist on some small time-interval, i.e., these bounds are valid for $s,t \in [t_0^{(n)},t_1^{(n)}]$.  If these bounds were to hold for any $s,t$ close to $T$, then fixing $s$ in (\ref{eq: modulation estimates improved}), we have
\begin{equation}
    |\log \lambda(t) | \leq
|\log \lambda(s) | + |\omega(s) | + o_{t \to T,s \to T}(1) + C(1 +\sqrt{|\log \lambda(t)|}) \label{eq: final contradiction}
\end{equation}
Letting $t \to T$, one has $\lambda(t) \to 0$ in case of finite-time blow-up at $T < +\infty$, which contradicts (\ref{eq: final contradiction}). Hence, the solution must be global for $D \geq 2$. 

For this last contradiction to be rigorous, we need a final argument to prove that, as $t \to T$, the solution $v$ stays continuously close to the family of rescaled bubbles, i.e., that the scale $\lambda(t)$ is defined continuously on $[t_0^{(1)},T)$. We take inspiration from Jendrej-Lawrie's argument (\cite{jendrej}) and use the notion of collision intervals.

 \section{Collision Intervals}\label{sec:collision intervals}
 In the following, we work at the level of $v(r,t)$ again.

 Let
\begin{align*}
    d_1(t,\rho) &= \inf_{\lambda} \left( ||v(t,\cdot)-v(T,\cdot)-Q_{\lambda}+\pi||_{H^1(B_{\delta}(0) \setminus B_{\rho}(0))} + \frac{\rho}{\lambda} + \frac{\lambda}{\sqrt{T-t}} \right) \\
    d_0(t,\rho) &= ||v(t,\cdot)-v(T,\cdot)||_{H^1(B_{\delta}(0) \setminus B_{\rho}(0))} + \frac{\rho}{\sqrt{T-t}}.
\end{align*}
The abstract profile decomposition, the Palais-Smale condition and the above analysis tell us that there is $t_0^{(n)} \to T$ for which $\lim \limits_{n \to +\infty} d_1(t_0^{(n)},0) = 0$. Moreover, there exists $\rho(t)$ (constructed  from (\ref{eq: no concentration at self-similar scale})) using a diagonal argument) for which $\lim \limits_{t \to T} d_0(t,\rho(t)) = 0$.

\begin{theorem}[Continuous bubbling]\label{thm: continuous bubbling}
    Let $D \geq 2$. The solution stays continuously close to the single-bubble, i.e., $$\lim \limits_{t \to T}d_1(t,0) = 0.$$
\end{theorem}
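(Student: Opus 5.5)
We argue by contradiction, following the collision-interval scheme of Jendrej and Lawrie. Suppose $\limsup_{t\to T} d_1(t,0)>0$. Then there is $\eta_0>0$ and times $s_n\to T$ with $d_1(s_n,0)\ge \eta_0$. We also have the times $t_0^{(n)}\to T$ with $d_1(t_0^{(n)},0)\to 0$.

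\textbf{Step 1: collision intervals.} Fix $0<\eta<\eta_0$ small, as required for orbital stability (Section~\ref{sec:orbital stability}). Interlacing the two sequences gives intervals $[t_0^{(n)},t_1^{(n)}]$ such that:
\begin{itemize}
\item $d_1(t_0^{(n)},0)\to0$;
\item $d_1(t_1^{(n)},0)=\eta$, with $d_1(t,0)\le\eta$ on $[t_0^{(n)},t_1^{(n)}]$.
\end{itemize}
This uses continuity of $t\mapsto d_1(t,0)$, which follows from the continuity of $v$ in $H^1(B_\delta)$ for $t<T$ and from $\lambda\mapsto Q_\lambda$ being continuous.

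On such an interval, $d_1\le\eta$ gives closeness in $H^1$ to $v(T)-\pi+Q_{\lambda}$ for some $\lambda\le\eta\sqrt{T-t}$. Via Lemma~\ref{lemma:equivalence of remainder convergence}, this converts into the hypothesis (\ref{eq: profile decomposition for u on sequence of intervals}) for $u$. We may need to subdivide the interval into pieces on which $\lambda$ varies by at most a factor $e$, but the compatibility of the orbital stability maps in Proposition~\ref{prop: orbital stability} glues these pieces into a single $C^1$ scale $\lambda_n(t)$ on the whole interval. We then apply the $L^2$-estimate (\ref{eq: l^2 estimate for g}) and the modulation estimates (\ref{eq: modulation estimates weaker})–(\ref{eq: modulation estimates improved}).

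\textbf{Step 2: the scale stays controlled across each interval.} Apply (\ref{eq: modulation estimates improved}) between $t_0^{(n)}$ and $t\in[t_0^{(n)},t_1^{(n)}]$, together with the bound $|\omega|\lesssim 1+\sqrt{|\log\lambda|}$. This yields
\[
|\log\lambda_n(t)-\log\lambda_n(t_0^{(n)})|\le C\bigl(1+\sqrt{|\log\lambda_n(t)|}\bigr)+o(1).
\]
Hence $\lambda_n(t)$ stays within a factor $\exp\bigl(C(1+\sqrt{|\log\lambda_n(t_0^{(n)})|})\bigr)$ of $\lambda_n(t_0^{(n)})$.

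We then need $\lambda_n(t)/\sqrt{T-t}$ to remain small. At $t_0^{(n)}$ we have $\lambda\ll\sqrt{T-t}$ by Corollary~\ref{cor: lambda_k ll sqrt T-t_k}, but a loss of $e^{C\sqrt{|\log\lambda|}}$ could a priori destroy this. To handle it, I would use the weaker estimate (\ref{eq: modulation estimates weaker}) together with Cauchy–Schwarz and the $L^2_t$ bound on $\|\tilde g\chi\|_{\dot H^2}$:
\[
\int \frac{\|\tilde g\|_{\dot H^2}}{\lambda}\,da\le \Bigl(\int \lambda^{-2}\,da\Bigr)^{1/2}o(1).
\]
Alternatively, a bootstrap exploiting that $\lambda^{-1}\sqrt{T-t}$ is essentially monotone should control this ratio. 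I expect this to be the \textbf{main obstacle}: showing $\lambda_n(t)\ll \sqrt{T-t}$ uniformly on the collision intervals, since $T-t$ shrinks while $\lambda$ is only controlled up to a subexponential factor.

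\textbf{Step 3: the contradiction at $t_1^{(n)}$.} Pass to a subsequence and repeat the compactness analysis of Section~\ref{sec: profile decompo along sequence of times} at the times $t_1^{(n)}$. The key input is the Palais–Smale condition (\ref{eq: well-chosen sequence of times}) along $t_1^{(n)}$. To secure it, I would instead pick a time $\tau_n$ in $[t_1^{(n)}-\epsilon(T-t_1^{(n)}),\,t_1^{(n)}]$ minimizing $\|v_t\|_{L^2}$; this is possible by (\ref{eq: finite L^2-norm of v_t}). Along $\tau_n$, Proposition~\ref{prop:strong convergence of remainder} and the single-bubble result give $d_1(\tau_n,0)\to0$.

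It remains to transfer this to $t_1^{(n)}$. By Step~2 the scale is continuous and comparable on $[\tau_n,t_1^{(n)}]$, and the $L^2_t$ smallness of $v_t$ over this short window shows that $v(t_1^{(n)})-v(\tau_n)\to 0$ in $H^1$ modulo the bubble. Together with $d_0(t,\rho(t))\to0$, which controls the exterior region, this gives $d_1(t_1^{(n)},0)\to0$. This contradicts $d_1(t_1^{(n)},0)=\eta$ and proves $\lim_{t\to T}d_1(t,0)=0$.
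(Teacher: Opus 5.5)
Your Step 3 has a genuine gap: the transfer from $\tau_n$ to $t_1^{(n)}$ does not follow from what you have.

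\textbf{Why the transfer fails.} Your window $[\tau_n,t_1^{(n)}]$ has length up to $\epsilon(T-t_1^{(n)})$. That is the self-similar time scale, whereas the bubble evolves on the much shorter time scale $\lambda^2\ll T-t$. Smallness of $\int\|v_t\|_{L^2}^2\,dt$ on the window only controls the difference at the bubble scale as
\[
\|v(t_1^{(n)},\lambda\,\cdot)-v(\tau_n,\lambda\,\cdot)\|_{L^2(B_R)}\le \frac{\sqrt{t_1^{(n)}-\tau_n}}{\lambda}\Bigl(\int_{\tau_n}^{t_1^{(n)}}\|v_t(t)\|_{L^2(B_{1/2})}^2\,dt\Bigr)^{1/2},
\]
and the prefactor can be of size $\sqrt{T-t}/\lambda\to\infty$. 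So nothing prevents the solution from leaving the small neighborhood of the bubble family within time $O(\lambda^2)$ somewhere inside your window; that is exactly what a collision looks like.

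Knowing that the modulation parameter stays comparable does not help either. Step 2 only gives this up to a factor $e^{C\sqrt{|\log\lambda|}}$, and in any case it says nothing about the $H^1$-size of the remainder beyond the a priori bound $\eta$. Hence $d_1(\tau_n,0)\to0$ does not propagate to $d_1(t_1^{(n)},0)\to0$.

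Relatedly, what you call the main obstacle is not one. The bound $\lambda\lesssim\eta\sqrt{T-t}$ is built into $d_1\le\eta$, and $\lambda\ll\sqrt{T-t}$ at a good time follows from the no-concentration argument of Corollary \ref{cor: lambda_k ll sqrt T-t_k}. The improved estimate (\ref{eq: modulation estimates improved}) plays no role in this theorem.

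\textbf{The missing idea.} You need to work at the bubble time scale $\lambda^2$, with the Palais--Smale condition at the bubble scale, $\lambda(s)\|v_t(s)\|_{L^2(B_{\delta/2})}\to0$. This condition is implied by the self-similar one. Unlike it, though, a time satisfying it can be found within $O(\lambda^2)$ of any given time, and only at that distance does the transfer work. The paper establishes three facts:
\begin{enumerate}
\item Reaching $\{d_1\ge\tilde\eta\}$ from $\{d_1\le\varepsilon\}$ takes time at least $C\lambda^2$ (Lemma \ref{lemma:collision interval length estimate}). On shorter windows the rescaled $L^2$ difference above is small, so both rescaled solutions have distributional limit $Q$. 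The energy identity of Lemma \ref{lemma: failure of convergence of v(t) to v(T) in norm} then upgrades this to strong $\dot H^1$ convergence.
\item On windows of length at most $\lambda^2$ the scale is essentially constant (Lemma \ref{lemma: lambda(t) and lambda(s) become similar in the limit}). This is your Cauchy--Schwarz idea; it works precisely because $\int\lambda^{-2}\,da\lesssim1$ on such windows, and it fails on your whole interval.
\item Wherever $d_1\ge\varepsilon$, one has $\lambda(t)\|v_t(t)\|_{L^2(B_{\delta/2})}\ge c_0$ (Proposition \ref{prop: lambda(t) v_t estimate on c_n d_n}). Otherwise, compactness at scale $\lambda(s_n)$, the orbital-stability closeness (which forces a non-constant harmonic limit) and the same energy identity give $d_1(s_n,0)\to0$.
\end{enumerate}
On the windows $[c_n,d_n]$ of length $\lambda(c_n)^2/n$ from Corollary \ref{cor: collision interval length exact estimate}, fact 3 yields $\int_{c_n}^{d_n}\|v_t\|^2\ge c_0^2/(4n)$. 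Summing over $n$ contradicts (\ref{eq: finite L^2-norm of v_t}).

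Your direct scheme can be repaired along these lines. Take the window $[t_1^{(n)}-\theta\lambda(t_1^{(n)})^2,t_1^{(n)}]$ and find $\tau_n$ there with $\lambda\|v_t(\tau_n)\|\to0$. Then apply the argument of fact 3 at $\tau_n$, and the argument of fact 1, using the smallness of the tail of (\ref{eq: finite L^2-norm of v_t}), to transfer to $t_1^{(n)}$. But these are exactly the ingredients your proposal lacks.
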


The proof will be by contradiction, in the same spirit as Section 5 in \cite{jendrej}.

\begin{definition}[Collision Interval]
    We say that an interval $[a,b]$ is a collision-interval $C_i([a,b],\varepsilon,\eta,\rho(t))$ for the parameters $\varepsilon < \eta, \rho(t)$ if $d_1(a,0) \leq \varepsilon, d_1(b,0) \geq \eta$ and
    $$
\sup_{t \in[a,b]}d_i(t,\rho(t)) \leq \varepsilon.
    $$
\end{definition}

\begin{lemma} \label{lemma: non-continuous decomposition implies collision}
    If $\lim \limits_{t \to T} d_1(t,0) \neq 0$, there exists $\varepsilon_n \to 0, \eta > 0, a_n \to T, b_n \to T$, $a_1 < b_1 < a_2 < b_2$ ... and $\rho(t)$ defined on each $[a_n,b_n]$ for which $[a_n,b_n]$ is a collision-interval $C_0([a_n,b_n],\varepsilon_n, \eta, \rho(t))$. Moreover, there are no such sequences for which $[a_n,b_n]$ is a collision-interval $C_1$.
\end{lemma}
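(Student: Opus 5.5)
This is a continuity argument in the spirit of \cite{jendrej}, Section 5, and it has two parts.

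\textbf{Construction of $C_0$ intervals.} Assume $\lim_{t\to T} d_1(t,0)\neq 0$. Then there are $\eta_0>0$ and times $s_n\to T$ with $d_1(s_n,0)\geq \eta_0$. We also have times $t_0^{(n)}\to T$ with $d_1(t_0^{(n)},0)\to 0$, and a scale $\rho(t)$ with $d_0(t,\rho(t))\to 0$.

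Set $\eta=\eta_0/2$. After interlacing and passing to subsequences, we may assume $t_0^{(n)}<s_n<t_0^{(n+1)}$. Let
\[
\varepsilon_n=\max\Big\{d_1(t_0^{(n)},0),\ \sup_{t\geq t_0^{(n)}} d_0(t,\rho(t))\Big\}\to 0 .
\]
Put $a_n=t_0^{(n)}$ and $b_n=s_n$. Then $d_1(a_n,0)\leq\varepsilon_n$, $d_1(b_n,0)\geq\eta$, and $\sup_{[a_n,b_n]}d_0(t,\rho(t))\leq\varepsilon_n$. So $[a_n,b_n]$ is a $C_0$ interval.

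For the second part it is convenient to shrink the interval. Take $a_n$ to be the last time in $[t_0^{(n)},s_n]$ at which $d_1(\cdot,0)\leq\varepsilon_n$, and $b_n$ the first time after $a_n$ at which $d_1(\cdot,0)\geq\eta$. Then $d_1(t,0)\in[\varepsilon_n,\eta]$ for $t\in[a_n,b_n]$. This requires continuity of $t\mapsto d_1(t,0)$. I would check it directly: $t\mapsto v(t)$ is continuous in $H^1(B_\delta)$ for $t<T$, since the solution is smooth before $T$, and $d_1(\cdot,0)$ is an infimum of functions that are uniformly Lipschitz in $v(t)$, up to the $\lambda/\sqrt{T-t}$ term, which is also continuous in $t$.

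\textbf{Exclusion of $C_1$ intervals.} Suppose $[a_n,b_n]$ were $C_1$ intervals with $\varepsilon_n\to 0$. Then $\sup_{[a_n,b_n]} d_1(t,\rho(t))\leq\varepsilon_n$, while $d_1(b_n,0)\geq\eta$.

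The first step is to show that $d_1(t,\rho(t))$ small, together with the information near $r\lesssim\rho(t)$, forces $d_1(t,0)$ to be small. On $[\rho(t),\delta]$, the profile $v(t)-v(T)$ is $H^1$-close to $Q_\lambda-\pi$ with $\rho\ll\lambda\ll\sqrt{T-t}$. So the energy of $v(t)$ on $[\rho,\delta]$ is already close to $E(Q)$ (using $\rho/\lambda\to 0$). By the energy identity \eqref{eq: energy of bubble is recovered on 0 < r < sqrt T - t}, the total energy in $B_{\sqrt{T-t}}$ tends to $E(Q)$. Hence the energy on $[0,\rho(t)]$ tends to $0$. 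By Lemma~\ref{lemma:closeness to l0 pi} with $R_1=0$, $v(t)$ is then uniformly close to a multiple of $\pi$ on $[0,\rho]$. That multiple is $0$, by the normalization $v(0,t)=0$ and matching at $r=\rho$, where $Q_\lambda(\rho)-\pi+v(T,\rho)\approx 0$ since $v(T,0)=\pi$. The same lemma controls the $\mathcal{E}$ norm, which gives the $H^1(B_\rho)$ smallness of $v(t)-v(T)-Q_\lambda+\pi$, up to the $L^2$ part, which is trivial. Hence $d_1(t,0)\to 0$ uniformly on $[a_n,b_n]$, contradicting $d_1(b_n,0)\geq\eta$.

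The main obstacle is making this energy-splitting step rigorous uniformly in $t$ rather than along a sequence. I would use the argument from the proof of Lemma~\ref{lemma: failure of convergence of v(t) to v(T) in norm}: the Cauchy--Schwarz $\sqrt{AB}\geq D|\cos v(0)-\cos v(\rho)|$ bound, applied on $[0,\rho]$. It gives energy at least $4\pi D$ whenever $v$ travels from $0$ to near $\pi$, which rules out a hidden second transition inside $B_\rho$. Without this, we would only get energy smallness, not the correct limiting multiple of $\pi$.
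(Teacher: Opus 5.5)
Your construction of the $C_0$ intervals matches the paper's: $a_n=t_0^{(n)}$, bad times $b_n$, interlacing, and absorbing $\sup_{t\ge a_n}d_0(t,\rho(t))$ into $\varepsilon_n$. Your exclusion of $C_1$ intervals also follows the paper's outline. At the single time $b_n$, energy bookkeeping forces the energy in $B_{\rho(b_n)}$ to vanish, $v(0,t)=0$ pins the multiple of $\pi$ to $0$, and then $d_1(b_n,0)\to0$. Two of your steps are not needed. The shrinking of intervals via continuity of $d_1(\cdot,0)$ is done by the paper only after this lemma. Uniformity in $t$ is also unnecessary, because the contradiction is only required at $t=b_n$, which is already a sequence. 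The real difficulty is elsewhere.

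\textbf{The gap: bookkeeping with the nonlinear energy.} First, the energy of $v(b_n)$ on $[\rho,\delta]$ also contains the energy of $v(T)$. So to compare with $\lim_{t\to T}E(v(t);B_{\alpha\sqrt{T-t}})=E(Q)$ you must work on $[\rho,\alpha\sqrt{T-b_n}]$. More seriously, $H^1(B_\delta\setminus B_\rho)$-closeness of $v(b_n)-v(T)$ to $Q_\lambda-\pi$ controls the Dirichlet part of the energy on the annulus, but not the potential term $\pi D^2\int\sin^2(v)\,r^{-1}dr$. The $L^2$ part of the $H^1$ norm carries the weight $r\,dr$, not $r^{-1}dr$. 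The annulus has logarithmic length $\log(\sqrt{T-b_n}/\rho(b_n))\to\infty$, so the $H^1$ distance gives no control of $\int|w|^2r^{-1}dr$ there. Your Bogomol'nyi bound, used on $[0,\rho]$ to pin the multiple, does not supply the missing lower bound on the annulus. Moreover, once the energy on $[0,\rho]$ is known to be small, Lemma \ref{lemma:closeness to l0 pi} with $v(0,t)=0$ already gives the multiple $0$.

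\textbf{How to repair it.} The paper does the bookkeeping in $\dot H^1$ instead:
\begin{itemize}
\item By Lemma \ref{lemma: failure of convergence of v(t) to v(T) in norm}, $\|v(b_n)\|^2_{\dot H^1(B_\delta)}\to\|Q\|^2_{\dot H^1(\mathbb R^2)}+\|v(T)\|^2_{\dot H^1(B_\delta)}$.
\item The reverse triangle inequality and orthogonality of scales show that $B_\delta\setminus B_{\rho(b_n)}$ already carries this whole limit. Hence $\|v(b_n)\|_{\dot H^1(B_{\rho(b_n)})}\to0$.
\item The pieces $v(T)-\pi$ and $Q_{\lambda_n}$ are small in $\dot H^1(B_{\rho(b_n)})$, since $\rho(b_n)\to0$ and $\rho(b_n)/\lambda_n\to0$.
\item The $L^2$ parts on the shrinking ball follow from the uniform $L^\infty$ bound.
\end{itemize}
If you prefer to keep the nonlinear energy, you must prove $\liminf_n E(v(b_n);[\rho(b_n),\alpha\sqrt{T-b_n}])\ge E(Q)$. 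One way is to rescale by $\lambda_n$ and use the $\dot H^1$ control to get locally uniform convergence on compact subsets of $(0,\infty)$ to $Q+c$. Finite energy forces $c\in\pi\mathbb Z$, and Fatou's lemma then gives the bound.
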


\begin{proof}
    Let $a_n = t_0^{(n)} \to T$ with $ \varepsilon_n := d_1(t_0^{(n)},0) \to 0$ and assume that $\lim \limits_{n \to +\infty} d_1(b_n,0) \geq \eta > 0$ along some sequence $b_n \to T$. Up to taking subsequences, one can assume that $a_1 < b_1 < a_2 < b_2$ ... and $\varepsilon_n$ is decreasing. Observe that
    $$
    \lim \limits_{n \to +\infty} \sup_{t \in [a_n,b_n]} d_0(t,\rho(t)) = 0
    $$ 
    for $\rho(t)$ obtained using (\ref{eq: no concentration at self-similar scale}). Choose $n_1 < n_2 < ... < n_k$ always larger than the previous index and satisfying
    $$
    \sup_{t \in [a_{n_{k}},b_{n_{k}}]} d_0(t,\rho(t)) \leq \varepsilon_{k}, \quad  d_1(a_{n_{k}},0) = \varepsilon_{n_{k}} \leq \varepsilon_k.
    $$
    Passing to this subsequence (which we still denote with index $n$), we have found a collision-interval $C_0([a_n,b_n],\varepsilon_n,\eta,\rho(t))$.

    Assume now for a contradiction that there is a sequence of collision intervals $C_1([a_n,b_n],\varepsilon_n, \eta, \rho_0(t))$. For each $b_n, \rho_0(b_n)$, there is some $\rho_0(b_n) \lesssim \varepsilon_n \lambda_n \lesssim \varepsilon_n^2 \sqrt{T-b_n}$ for which
    $$
     d_1(b_n,\rho_0(b_n))  \simeq ||v(b_n,\cdot)-v(T,\cdot)-Q_{\lambda_n}+\pi||_{H^1(B_{\delta}(0) \setminus B_{\rho_0(b_n)}(0))}  \lesssim \varepsilon_n.
    $$
    From (\ref{eq: existence limit of energy at time T}), we deduce that
    $$
    ||v(b_n,\cdot)||_{\dot{H}^1(B_{\delta}(0))}^2  \to ||Q||_{\dot{H}^1(\mathbb R^{2})}^2 + ||v(T,\cdot)||_{\dot{H}^1(B_{\delta}(0))}^2.
    $$
    Moreover, observe that $d_1(b_n,\rho_0(b_n)) \to 0$ implies
    \begin{align*}
        \lim \limits_{n \to +\infty} ||v(b_n,\cdot)||_{\dot{H}^1(B_{\delta}(0) \setminus B_{\rho_0(b_n)}(0))}^2 &= \lim \limits_{n \to +\infty} ||v(T,\cdot)-Q_{\lambda_n}||_{\dot{H}^1(B_{\delta}(0) \setminus B_{\rho_0(b_n)}(0))}^2 \\
        &= ||Q||_{\dot{H}^1(\mathbb R^{2})}^2 + ||v(T,\cdot)||_{\dot{H}^1(B_{\delta}(0))}^2
    \end{align*}
    by reverse triangle inequality and orthogonality of scales. Hence, 
    $$
    ||v(b_n,\cdot)||_{\dot{H}^1(B_{\rho_0(b_n)}(0))}^2  \to 0.
    $$
    Lemma \ref{lemma:closeness to l0 pi} implies that along subsequence, there is $l_n \in \mathbb Z$ for which $|v(r,b_n) - l_n \pi| = o_n(1)$ on $[0, \rho_0(b_n)]$. But evaluating at $r = 0$ shows that $l_n = 0$. In particular,
    $$
    ||v(b_n,\cdot)||_{H^1(B_{\rho_0(b_n)}(0))}^2  \to 0.
    $$
    The same holds for $v(T,\cdot)-\pi$ and $Q_{\lambda_n}$ on $B_{\rho_0(b_n)}(0)$, which means that
    $$
||v(b_n,\cdot) - v(T,\cdot) - Q_{\lambda_n} + \pi||_{H^1(B_{\rho_0(b_n)}(0))}^2  \to 0,
    $$
    while
$$
d_1(b_n,\rho_0(b_n))^2 \simeq  ||v(b_n,\cdot) -v(T,\cdot) - Q_{\lambda_n} + \pi||_{H^1(B_{\delta}(0) \setminus B_{\rho_0(b_n)}(0))}^2  \to 0,
    $$
    which contradicts $d_1(b_n,0) \geq \eta$.
\end{proof}

In the following, we assume that $\lim \limits_{t \to T}d_1(t,0) \neq 0$ and reach a contradiction. Consider a sequence of collision intervals $C_0([a_n,b_n],\varepsilon_n,\eta,\rho(t))$ as in Lemma \ref{lemma: non-continuous decomposition implies collision}.

By selecting a smaller $b_n$ if needed, one may always assume that $d_1(b_n,0) = \eta$ and $d_1(c,0) \leq \eta$ for all $c \in [a_n,b_n]$ in our sequence of collision intervals. Note also that for any $\lambda> 0$,
$$
v(t,0) - v(T,0) - Q_{\lambda}(0) + \pi = 0.
$$
We may also assume that $C(D,\delta)\eta$ is smaller than the $\delta_1$ from Orbital Stability, where $C(D,\delta)$ is the constant from Lemma \ref{lemma:equivalence of remainder convergence}. In particular, one can apply Orbital Stability on each $[a_n,b_n]$ and obtain a $C^1$-scale $\lambda(t)$ defined on the union $\cup_n [a_n,b_n]$ on each interval, since $\sup \limits_{[a_n,b_n]} d_1(t,0) \leq \eta$. This scale satisfies the modulation estimates (\ref{eq: modulation estimates weaker}).

\begin{lemma}\label{lemma: lambda(t) and lambda(s) become similar in the limit}
Let $1/2 > \tilde{\varepsilon} > 0$ be fixed. There exists $n_0( \tilde{\varepsilon} ) \gg 1$ such that for any function $0 \leq \theta(t) \leq 1$ defined on $\cup_{n}[a_n,b_n]$, if $n \geq n_0$ and $[s, s + \theta(s)\lambda(s)^2]$ lies inside $[a_n,b_n]$, then 
$$
(1- \tilde{\varepsilon} ) \lambda(s) < \lambda(t) < (1+ \tilde{\varepsilon} ) \lambda(s), \quad t \in  [s, s + \theta(s)\lambda(s)^2].
$$
Moreover,
    $$
 \sup_{s \leq t \leq s + \theta(s)\lambda(s)^2} \left| \log \frac{\lambda(t)}{\lambda(s)} \right| \leq \tilde{\varepsilon}.
    $$
\end{lemma}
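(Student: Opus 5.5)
The plan is a continuity/bootstrap argument built on the weaker modulation estimate (\ref{eq: modulation estimates weaker}), which holds on each $[a_n,b_n]$ with constants independent of $n$. On $[s,s+\theta(s)\lambda(s)^2]\subset[a_n,b_n]$ that estimate reads
\begin{equation*}
\left|\log\frac{\lambda(t)}{\lambda(s)}\right| \leq I(t,s) + C\int_s^t \frac{\|\tilde g(a)\chi_{r_0/2}\|_{\dot H^2(B_\delta(0))}}{\lambda(a)}\,da .
\end{equation*}
The goal is to show that the right-hand side is $o_{n\to\infty}(1)$ uniformly in $s$ and $\theta$, provided $\lambda(a)$ stays comparable to $\lambda(s)$ on the interval. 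The first term is harmless: $I(t,s)=o(1)$ as $s,t\to T$, and $a_n\to T$, so for $n\ge n_0$ we get $I(t,s)\le\tilde\varepsilon/3$.

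For the integral, set $t^*=\sup\{t\in[s,s+\theta(s)\lambda(s)^2] : |\log(\lambda(a)/\lambda(s))|\le\tilde\varepsilon \text{ on } [s,t]\}$. Since $\lambda$ is $C^1$ on $[a_n,b_n]$, $t^*>s$. On $[s,t^*]$ we have $\lambda(a)\ge e^{-\tilde\varepsilon}\lambda(s)\ge\lambda(s)/2$, so Cauchy--Schwarz gives
\begin{equation*}
\int_s^{t}\frac{\|\tilde g\chi_{r_0/2}\|_{\dot H^2}}{\lambda(a)}\,da \le \frac{2}{\lambda(s)}\,(\theta(s)\lambda(s)^2)^{1/2}\Big(\int_s^t\|\tilde g\chi_{r_0/2}\|_{\dot H^2}^2\,da\Big)^{1/2} \le 2\Big(\int_{a_n}^{T}\|\tilde g\chi_{r_0/2}\|_{\dot H^2}^2\,da\Big)^{1/2}.
\end{equation*}
The scale cancels exactly, which is the key point. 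The remaining integral is a tail of the uniform $L^2_t$ bound (\ref{eq: l^2 estimate for g}). That bound comes from (\ref{eq: upper bound of g(t) in terms of Delta u}) and (\ref{eq: upper bound of g(t) in terms of u_t}) together with (\ref{eq: finite L^2-norm of v_t}), and it is integrable on $\bigcup_n[a_n,b_n]$, so its tail tends to $0$ as $n\to\infty$.

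The main obstacle is this tail smallness. The pointwise bound (\ref{eq: upper bound of g(t) in terms of u_t}) contains an additive constant $C$, so what we actually control is $\int_s^t \|\tilde g\chi_{r_0/2}\|_{\dot H^2}^2 \lesssim \int_s^t \|u_t\chi_{r_0/4}\|_{L^2}^2 + C|t-s|$. The first term is a tail of (\ref{eq: finite L^2-norm of v_t}), once one converts $v_t$ to $u_t=r^{-D}v_t$ on the support of $\chi_{r_0/4}$, where smoothness and the Hardy-type equivalence of Lemma \ref{lemma:equivalence of remainder convergence} should suffice. The second term is at most $C\lambda(s)^2\to0$. Hence for $n\ge n_0(\tilde\varepsilon)$ the full right-hand side is at most $2\tilde\varepsilon/3$ on $[s,t^*]$.

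Strict inequality and continuity then force $t^*$ to be the right endpoint, which proves the second claim $\sup|\log(\lambda(t)/\lambda(s))|\le\tilde\varepsilon$. For the first claim, run the same argument with $\tilde\varepsilon$ replaced by $\tilde\varepsilon/2$ and use $e^{\pm\tilde\varepsilon/2}\in(1-\tilde\varepsilon,1+\tilde\varepsilon)$ for $\tilde\varepsilon<1/2$.
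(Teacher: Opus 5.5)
Your proposal is correct and follows essentially the paper's argument: a maximal-time continuity bootstrap keeping $\lambda(a)$ comparable to $\lambda(s)$, then Cauchy--Schwarz in time so that $\sqrt{t-s}\le\lambda(s)$ cancels the factor $\lambda(s)^{-1}$, and finally smallness of the tail $\int_s^t\bigl(1+\|\tilde g\chi_{r_0/2}\|_{\dot H^2}^2\bigr)\,da$ coming from (\ref{eq: finite L^2-norm of v_t}), which is exactly the paper's bound $I(t,s)+2C\sqrt{I(t,s)}=o_{n\to\infty}(1)$. The differences are cosmetic: you bootstrap on $|\log(\lambda(t)/\lambda(s))|$ rather than on the ratio. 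Also, the $u_t$-to-$v_t$ conversion you flag needs no Hardy-type argument, since for radial $f$ the norms $\|r^{-D}f\|_{L^2(\mathbb R^{2D+2})}$ and $\|f\|_{L^2(\mathbb R^2)}$ agree up to a dimensional constant.
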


\begin{proof}
   Let $\tau \in (s, s + \theta(s)\lambda(s)^2]$ be maximal with
    $$
    \lambda(t) \in \left( (1- \tilde{\varepsilon} ) \lambda(s), (1+ \tilde{\varepsilon} ) \lambda(s) \right), s \leq t \leq \tau
    $$
    We show that $\tau = s + \theta(s)\lambda(s)^2$ by showing that 
    $$
    \lambda(\tau) \in \left( (1- \tilde{\varepsilon} ) \lambda(s), (1+ \tilde{\varepsilon} ) \lambda(s) \right)
    $$
    if $n \geq n_0( \tilde{\varepsilon} )$. When $s \leq t \leq \tau$, (\ref{eq: modulation estimates weaker}) implies
\begin{align*}
        \left|\log \frac{\lambda(t)}{\lambda(s)}\right| &\leq  I(t,s)  + C \int_{s}^{t}  \frac{|| g(a)\chi_{r_0/2}||_{\dot{H}^2(B_{\delta}(0))} }{\lambda(a)}da \\
         &\leq  I(t,s)  +\frac{(1-\tilde{\varepsilon})^{-1}C}{\lambda(s)} \sqrt{t-s} \left( \int_{s}^{t}  || g(a)\chi_{r_0/2}||_{\dot{H}^2(B_{\delta}(0))}^2 da \right)^{1/2} \\
         &\leq I(t,s)  + 2C \sqrt{I(t,s)} = o_{n \to +\infty}(1),
\end{align*}
where the $o_{n\to +\infty}(1)$ depends only on the interval $[a_n,b_n]$.

    If $n \geq n_0( \tilde{\varepsilon})$ is large enough, then $o_{n \to +\infty}(1) e^{|o_{n \to +\infty}(1)|} <  \tilde{\varepsilon}$ and
\begin{align*}
     \left| \frac{\lambda(t)}{\lambda(s)} - 1\right| &= \left|\mathrm{exp}\left( \log \left( \frac{\lambda(t)}{\lambda(s)} \right) \right) - e^0 \right|  
     \\
     &\leq 
     \left|\log \left( \frac{\lambda(t)}{\lambda(s)} \right)  \right| \mathrm{exp} \left( \left| \log \frac{\lambda(t)}{\lambda(s)}  \right| \right) \\
     &<  \tilde{\varepsilon}, \quad s \leq t \leq \tau,
\end{align*}
     which implies the appropriate bounds for $\lambda(\tau)$.
\end{proof}

\begin{lemma}\label{lemma:collision interval length estimate}
   For all $0 < \tilde{\eta} < \eta$, there is $\varepsilon \in (0,\tilde{\eta})$, $C > 0$, $n_0 \in \mathbb N$ for which if $[c,d] \subset [a_n,b_n]$ with $d_1(c,0) \leq \varepsilon, d_1(d,0) \geq \tilde{\eta}$, $n \geq n_0$, then 
    $$
(d-c)^{\frac{1}{2}} \geq C \lambda(c).
    $$
\end{lemma}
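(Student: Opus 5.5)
We argue by contradiction, in the spirit of the corresponding lemma in \cite{jendrej}. Suppose that for some $0<\tilde\eta<\eta$ no such $\varepsilon$, $C$, $n_0$ exist. Then there are $\varepsilon_k\to0$, indices $n_k\to+\infty$ and subintervals $[c_k,d_k]\subset[a_{n_k},b_{n_k}]$ with
\[
d_1(c_k,0)\le\varepsilon_k,\qquad d_1(d_k,0)\ge\tilde\eta,\qquad \theta_k:=\frac{d_k-c_k}{\lambda(c_k)^2}\to0 .
\]
Since $\theta_k\le 1$ for $k$ large, Lemma \ref{lemma: lambda(t) and lambda(s) become similar in the limit} applies with $\theta(c_k)=\theta_k$ and gives $\lambda(t)\simeq\lambda(c_k)$ on $[c_k,d_k]$, with $\sup_{[c_k,d_k]}|\log(\lambda(t)/\lambda(c_k))|\to0$. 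The plan is to show that on a time window which is short compared with $\lambda(c_k)^2$, the solution barely moves in $H^1(B_\delta(0))$ once we subtract the essentially fixed bubble $Q_{\lambda(c_k)}$. This will give $d_1(d_k,0)\to0$, which contradicts $d_1(d_k,0)\ge\tilde\eta$.

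\textbf{Step 1: the time increment is small in $L^2$.} By (\ref{eq: finite L^2-norm of v_t}), the tail $\int_{c_k}^{T}\int_0^{1/2}|v_t|^2\,r\,dr\,dt$ tends to $0$. Cauchy--Schwarz then gives
\[
\|v(d_k)-v(c_k)\|_{L^2(B_{1/2})}\le\sqrt{d_k-c_k}\,o_k(1)=o_k(1)\,\theta_k^{1/2}\lambda(c_k).
\]
This is small, but only in $L^2$, while $d_1$ is measured in $H^1$.

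\textbf{Step 2: upgrade to $\dot H^1$ through the localized energy identity.} We apply (\ref{eq:equality involving v_t}) and (\ref{eq: better estimate for no concentration at self similar scale}) between $t_1=c_k$ and $t_2=d_k$ on annuli of the bubble scale $[A^{-1}\lambda(c_k),A\lambda(c_k)]$, and on $[0,A\lambda(c_k)]$. The error term is
\[
(A^{-1}\lambda(c_k))^{-1}\sqrt{d_k-c_k}\,\bigl(\int_{c_k}^{d_k}\!\int|v_t|^2\bigr)^{1/2}\lesssim A\,\theta_k^{1/2}\,o_k(1)\to0 .
\]
So the local energies at the bubble scale are almost conserved. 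Outside the bubble region, that is on $[\rho(t),\delta]$ away from $[A^{-1}\lambda,A\lambda]$, the collision-interval hypothesis $d_0(t,\rho(t))\le\varepsilon_{n_k}$ together with the smallness of the tails of $Q$ (for $A$ large) controls the $H^1$ distance uniformly in $t$. What remains is the region $r\lesssim A\lambda$.

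\textbf{Step 3 (main obstacle): closeness to $Q_\lambda$ on $B_{A\lambda}$, not just energy.} Rescale by $\lambda(c_k)$ and set $V_k(s,\rho)=v(\lambda(c_k)\rho,\,c_k+\lambda(c_k)^2 s)$ for $s\in[0,\theta_k]$. At $s=0$, $V_k(0)$ is $o(1)$-close to $Q$ in $H^1(B_A)$. By Step 1, $\|V_k(\theta_k)-V_k(0)\|_{L^2}\to0$, and by Step 2, $\|V_k(\theta_k)\|_{\dot H^1(B_A)}^2\to\|Q\|^2_{\dot H^1(B_A)}$ (up to the $A$-tails). Weak $\dot H^1$ convergence plus convergence of norms then gives strong convergence $V_k(\theta_k)\to Q$ in $H^1(B_A)$. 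To identify the weak limit we use the $L^2$ control; near $\rho=0$ we use $v(0,t)=0$ and Lemma \ref{lemma:closeness to l0 pi} to rule out a shift by a multiple of $\pi$, exactly as in Lemma \ref{lemma: non-continuous decomposition implies collision}. We will try this route first. If the energy-norm bookkeeping fails, the fallback is short-time parabolic stability on the rescaled problem: over a rescaled time interval $\theta_k\to0$, with data close to the steady state $Q$, the solution stays close in $H^1$ by local well-posedness.

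\textbf{Step 4: conclusion.} Combining Steps 2 and 3, and using $\lambda(d_k)/\lambda(c_k)\to1$ together with $\lambda\ll\sqrt{T-t}$, we obtain $d_1(d_k,0)\to0$, contradicting $d_1(d_k,0)\ge\tilde\eta$. Therefore $\theta_k$ is bounded below, i.e. $(d-c)^{1/2}\ge C\lambda(c)$.
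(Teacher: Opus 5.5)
\textbf{The gap: the region between $A\lambda$ and $\rho(t)$.} Your overall structure matches the paper's. You argue by contradiction with $(d_k-c_k)^{1/2}\ll\lambda(c_k)$, control the time increment in $L^2$ via (\ref{eq: finite L^2-norm of v_t}), identify the weak limit, and close with weak convergence plus convergence of norms. The step that fails is how you get the norm convergence.

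In Step 2 you claim that $d_0(t,\rho(t))\le\varepsilon_{n_k}$ controls $v(t)-v(T)$ everywhere outside $B_{A\lambda}$, so that ``what remains is the region $r\lesssim A\lambda$''. This is false. At $t=c_k$ you have both $d_1(c_k,0)\to0$ and $d_0(c_k,\rho(c_k))\to0$. The triangle inequality then gives $\|Q_{\lambda_k}-\pi\|_{\dot H^1(B_\delta\setminus B_{\rho(c_k)})}\to0$. This forces $\rho(c_k)/\lambda(c_k)\to+\infty$.

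Consequently, for every fixed $A$, the annulus $A\lambda(c_k)\le r\le\rho(t)$ has unbounded rescaled width. It is controlled neither by $d_0$ nor by your bubble-scale estimates. So Step 4 does not yet give $d_1(d_k,0)\to0$.

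A smaller issue: the localized inequality (\ref{eq: better estimate for no concentration at self similar scale}) controls the nonlinear energy $E$, not $\|\cdot\|_{\dot H^1}$. Converting one into the other needs local uniform convergence on annuli and a separate argument near $r=0$.

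\textbf{How to repair it.} The cleanest fix is what the paper does. Lemma \ref{lemma: failure of convergence of v(t) to v(T) in norm} holds along every $t\to T$, so it gives $\|v(d_k)-v(T)\|_{\dot H^1(B_\delta)}\to\|Q\|_{\dot H^1(\mathbb R^2)}$ directly. Your Step 1 shows that the rescaled differences $v(d_k,\lambda_k\cdot)-v(T,\lambda_k\cdot)+\pi$ have the same distributional limit as at time $c_k$, so their gradients converge to $\nabla Q$. Hence $1_{B_{\delta/\lambda_k}}\nabla[v(d_k,\lambda_k\cdot)-v(T,\lambda_k\cdot)]$ converges weakly in $L^2(\mathbb R^2)$ to $\nabla Q$ with convergent norms. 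That gives strong convergence on the whole ball $B_{\delta/\lambda_k}$ at once.

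With this route you no longer need:
\begin{itemize}
\item Step 2;
\item the region bookkeeping;
\item the local well-posedness fallback, which would in any case be delicate in the energy-critical setting;
\item Lemma \ref{lemma: lambda(t) and lambda(s) become similar in the limit}, since only $\lambda(c_k)\sim\lambda_k$ is used.
\end{itemize}

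If you prefer to keep your localized approach, you must add three things. First, apply the inequality on $[A\lambda(c_k),b]$ for a small fixed $b$; its error is $\lesssim A^{-1}\theta_k^{1/2}o_k(1)$. Second, use that $E(v(c_k);[A\lambda(c_k)/2,2b])$ is small. Third, handle $[b,\delta]$ using the smoothness of $v$ away from the origin up to time $T$.
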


\begin{proof}
    Assume not. Then there is $\eta > \tilde{\eta} > 0$ and sequences $\varepsilon_n \to 0, C_n \to 0$ for which the property is not true on a subinterval $[c_n,d_n] \subset [a_n,b_n]$, i.e.
    $$
    (d_n-c_n)^{\frac{1}{2}} \leq C_n \lambda(c_n).
    $$
    We prove that $d_1(d_n,0) \to 0$, contradicting $d_1(d_n,0) \geq \tilde{\eta}$.
    
        Note that by Orbital Stability, $\lambda(c_n) \sim \lambda_n \ll \sqrt{T - c_n}$, where $\lambda_n$ is a scale for which
    $$
    ||v(c_n,\cdot) - v(T,\cdot) - Q_{\lambda_n} + \pi||_{H^1(B_{\delta}(0))} \lesssim d_1(c_n,0) \to 0.
    $$
    Assume without loss of generality that $n$ is large enough so that $\lambda(c_n) \leq \sqrt{T-c_n}$ and $C_n \leq 1/2$. One finds that
            $$
\lim \limits_{n \to +\infty} \frac{\lambda_n}{\sqrt{T - d_n}}= 0
    $$
    due to
    $$
 1 \leq \frac{\sqrt{T-c_n}}{\sqrt{T-d_n}} \leq \frac{\sqrt{T-d_n + (d_n-c_n)}}{\sqrt{T-d_n}} \leq  1 + \frac{C_n\lambda(c_n)}{\sqrt{T-d_n}} \leq 1 +  \frac{1}{2} \frac{\sqrt{T-c_n}}{\sqrt{T-d_n}}.
    $$
    Let $V_n^0(r) = v(c_n,\lambda_nr) - v(T,\lambda_n r) + \pi$, $V_n(r) = v(d_n,\lambda_nr) - v(T,\lambda_n r) + \pi$, be defined on $(0,\delta \lambda_n^{-1})$ and extended as $0$ on $(\delta \lambda_n^{-1},+\infty)$. 
    
    As $V_n^0$ is uniformly bounded in $L^{\infty}(\mathbb R^2)$, a diagonal argument shows that, up to taking a subsequence, $V_n^0$ has a limit $V_{\infty}^0$ in the sense of distribution $\mathcal{D}'(\mathbb R^2)$ which is a $L^2_{loc}(\mathbb R^2)$-function. By hypothesis, $V_n^0(r) \to Q$ in $\dot{H}^1_{loc}(\mathbb R^2)$, hence $\nabla V_{\infty}^0 = \nabla Q$ in the distributional sense. Moreover, for any $R > 0$,
\begin{align*}
   ||V_n(r) - V_n^0(r)||_{L^2(B_{R}(0))} &= || v(c_n,\lambda_nr) - v(d_n,\lambda_nr) ||_{L^2(B_{R}(0))} \\
   &= \lambda_n^{-1} || v(c_n,\cdot) - v(d_n,\cdot) ||_{L^2(B_{R \lambda_n}(0))}  \\
   &\leq \frac{\sqrt{d_n-c_n}}{\lambda_n} \left( \int_{c_n}^{d_n}||v_t||_{L^2(B_{R\lambda_n}(0))}^2dt\right)^{\frac{1}{2}} \\
   &\lesssim C_n \to 0
\end{align*}
as the $L^2$-norm of $v_t$ is finite by (\ref{eq: finite L^2-norm of v_t}). Hence, $V_n(r)$ has the same distributional limit $V_{\infty}^0$ as $V_n^0$. In particular, $\nabla V_n \to \nabla Q$ in the sense of distribution $\mathcal{D}'(\mathbb R^2)$ as well. By Lemma \ref{lemma: failure of convergence of v(t) to v(T) in norm}, 
\begin{align*}
   ||V_n(r)||_{\dot{H}^1(B_{\delta \lambda_n^{-1}}(0))} 
   &= || v(d_n,\cdot) - v(T,\cdot) ||_{\dot{H}^1(B_{\delta}(0))}  \to ||Q||_{\dot{H}^1(\mathbb R^2)}.
\end{align*}
The boundedness of $||V_n(r)||_{\dot{H}^1(B_{\delta \lambda_n^{-1}}(0))}$  allows to upgrade the distributional convergence 
$$
1_{B_{\delta \lambda_n^{-1}}(0))} \cdot \nabla V_n \to \nabla Q
$$
to a weak $L^2(\mathbb R^2)$-convergence and the convergence of the norms then implies strong $L^2(\mathbb R^2)$ convergence, meaning that 
$$
\lim \limits_{n \to +\infty} ||v(d_n,\lambda_n \cdot) - v(T,\lambda_n \cdot) - Q + \pi||_{\dot{H}^1(B_{\delta \lambda_n^{-1}}(0))} = 0
$$
After rescaling,
$$
\lim \limits_{n \to +\infty} ||v(d_n, \cdot) - v(T, \cdot) - Q_{\lambda_n} + \pi||_{\dot{H}^1(B_{\delta}(0))} = 0
$$
and strong $L^2(B_{\delta}(0))$ convergence follows by dominated convergence as everything is uniformly bounded in $L^{\infty}$. This finishes the proof.
\end{proof}

\begin{corollary}\label{cor: collision interval length exact estimate}
    Let $\varepsilon \in (0,\tilde{\eta})$, $C$, $n_0$ be as in Lemma \ref{lemma:collision interval length estimate}. There exists $[c_n,d_n] \subset [a_n,b_n]$, $n_1\in \mathbb N$ such that for all $n \geq n_1$:
    \begin{align*}
        d_1(t,0) &\geq \varepsilon, \quad t \in [c_n,d_n] \\
        d_n-c_n &= \frac{1}{n}\lambda(c_n)^2 \\
        \frac{1}{2}\lambda(c_n) &\leq \lambda(t) \leq 2 \lambda(c_n),  \quad t \in [c_n,d_n]. 
    \end{align*}
\end{corollary}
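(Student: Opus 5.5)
The interval $[c_n,d_n]$ will be built from the collision intervals $C_0([a_n,b_n],\varepsilon_n,\eta,\rho(t))$ of Lemma \ref{lemma: non-continuous decomposition implies collision}. Set $\tilde\eta=\eta/2$. Lemma \ref{lemma:collision interval length estimate} then supplies $\varepsilon\in(0,\tilde\eta)$, $C$ and $n_0$. For $n$ large we have $d_1(a_n,0)=\varepsilon_n<\varepsilon$ and $d_1(b_n,0)=\eta>\tilde\eta$. Since $t\mapsto d_1(t,0)$ is continuous ($v$ is smooth in time for $t<T$), we can define
\[
c_n := \sup\{t\in[a_n,b_n]: d_1(t,0)\le \varepsilon\}, \qquad e_n := \inf\{t\in[c_n,b_n]: d_1(t,0)\ge\tilde\eta\}.
\]
With these choices $d_1(c_n,0)=\varepsilon$, $d_1(e_n,0)=\tilde\eta$, and $d_1(t,0)\ge\varepsilon$ for every $t\in[c_n,b_n]$.

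Next apply Lemma \ref{lemma:collision interval length estimate} to $[c_n,e_n]\subset[a_n,b_n]$. This gives $(e_n-c_n)^{1/2}\ge C\lambda(c_n)$. Put $d_n:=c_n+\frac1n\lambda(c_n)^2$. Once $n\ge n_1\ge\max(n_0,C^{-2})$ we get $\frac1n\lambda(c_n)^2\le C^2\lambda(c_n)^2\le e_n-c_n$, so $d_n\le e_n\le b_n$. This yields $[c_n,d_n]\subset[a_n,b_n]$, the exact length identity, and $d_1(t,0)\ge\varepsilon$ on $[c_n,d_n]$, since $[c_n,d_n]\subset[c_n,b_n]$.

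It remains to control the scale. Apply Lemma \ref{lemma: lambda(t) and lambda(s) become similar in the limit} with $\tilde\varepsilon=1/2$, $s=c_n$ and the function $\theta$ defined by $\theta(c_n)=1/n\le1$ (any value elsewhere). The interval $[s,s+\theta(s)\lambda(s)^2]=[c_n,d_n]$ lies in $[a_n,b_n]$, so for $n$ beyond the $n_0(1/2)$ of that lemma we obtain $\frac12\lambda(c_n)<\lambda(t)<\frac32\lambda(c_n)\le2\lambda(c_n)$ on $[c_n,d_n]$. Taking $n_1$ larger than all these thresholds finishes the argument.

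The step that needs the most care is the choice of $c_n$ as a last exit time. This is what guarantees the lower bound $d_1\ge\varepsilon$ on the whole interval, rather than only at its left endpoint. The other point to check is that $\lambda(t)$ is defined on all of $[a_n,b_n]$ through orbital stability (because $d_1\le\eta$ there), so that both lemmas apply on the subinterval. Continuity of $d_1(\cdot,0)$ is also used; it follows because the infimum over $\lambda$ of quantities that are continuous in $t$ in the $H^1$ norm is upper semicontinuous. If only semicontinuity were available, one would instead choose $c_n$ as an approximate supremum satisfying $d_1(c_n,0)\le\varepsilon$ and $d_1>\varepsilon$ afterwards, which is all the argument needs.
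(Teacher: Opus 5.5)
Your proposal is correct and follows essentially the same route as the paper: a last exit time $c_n$ from $\{d_1(\cdot,0)\le\varepsilon\}$, Lemma \ref{lemma:collision interval length estimate} applied up to a time where $d_1(\cdot,0)$ reaches $\tilde\eta$, and Lemma \ref{lemma: lambda(t) and lambda(s) become similar in the limit} to control the scale. The paper fixes that time $\tilde b_n$ first and takes the supremum over $[a_n,\tilde b_n]$, whereas you take the supremum over $[a_n,b_n]$ and then the first hitting time $e_n$, which makes no real difference.

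Two minor points. First, Lemma \ref{lemma: lambda(t) and lambda(s) become similar in the limit} is stated for $\tilde\varepsilon<1/2$, so use e.g.\ $\tilde\varepsilon=1/4$ instead of $1/2$. Second, upper semicontinuity alone does not give continuity of $d_1(\cdot,0)$. Continuity holds because the $H^1$ term is $1$-Lipschitz in $v(t)$ uniformly in $\lambda$, and near-minimizing $\lambda$ satisfy $\lambda\le (d_1(t,0)+1)\sqrt{T-t}$; the paper also uses this continuity implicitly.
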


\begin{proof}
Choose $n$ large enough so that $\varepsilon_n < \min\{\varepsilon, \eta\}$, $n \geq n_0$ and let $\tilde{b}_n \in (a_n,b_n]$ satisfy $d_1(\tilde{b}_n,0) = \tilde{\eta}$. Let $$
c_n = \sup\{t \in [a_n,\tilde{b}_n] : d_1(t,0) \leq \varepsilon\}.
$$
Lemma \ref{lemma:collision interval length estimate} implies
$$
(\tilde{b}_n-c_n)^{\frac{1}{2}} \geq C\lambda(c_n)
$$
Hence, if we define $$
 d_n := c_n + \frac{1}{n}\lambda(c_n)^2.
    $$
    then for $n$ large enough, $d_n \in [c_n,\tilde{b}_n]$ and Lemma \ref{lemma: lambda(t) and lambda(s) become similar in the limit} implies
    $$
    \frac{1}{2}\lambda(c_n) \leq \lambda(t) \leq 2 \lambda(c_n),  \quad t \in [c_n,d_n].
    $$
    Finally, one has $d_1(t,0) \geq \varepsilon$ on $[c_n,d_n]$ by definition of $c_n$.
\end{proof}

\begin{proposition}\label{prop: lambda(t) v_t estimate on c_n d_n}
    Let $[c_n,d_n]$ be defined as in Corollary \ref{cor: collision interval length exact estimate}. Then  
    $$
\liminf_{n \to +\infty} \inf_{t \in [c_n,d_n]} \lambda(t)||v_t(t)||_{L^2(B_{\delta/2}(0))} > 0.
    $$
\end{proposition}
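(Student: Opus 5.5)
We argue by contradiction, in the compactness style of Lemma \ref{lemma:collision interval length estimate}. If the claim fails, then after passing to a subsequence there are times $s_n \in [c_n,d_n]$ with
$$
\lambda(s_n)\|v_t(s_n)\|_{L^2(B_{\delta/2}(0))} \to 0.
$$
The aim is to show that this forces $d_1(s_n,0) \to 0$. That would contradict $d_1(t,0) \geq \varepsilon$ on $[c_n,d_n]$ from Corollary \ref{cor: collision interval length exact estimate}.

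First we set up the rescaled sequence. Since $s_n \in [c_n,d_n] \subset [a_n,b_n]$, Orbital Stability gives a scale $\lambda(s_n)$ with $\lambda(s_n) \sim \lambda(c_n)$, and $d_1(s_n,0)\le \eta$. Moreover $\lambda(s_n) \ll \sqrt{T-s_n}$: indeed $d_1$ small controls $\lambda/\sqrt{T-t}$, and, as in the proof of Lemma \ref{lemma:collision interval length estimate}, $d_n - c_n = \lambda(c_n)^2/n$ is negligible compared with $T-c_n$. Define
$$
V_n(r) = v(s_n,\lambda(s_n) r).
$$
It solves the stationary equation with right-hand side $\lambda(s_n)^2 v_t(s_n,\lambda(s_n)r)$, whose $L^2(rdr)$ norm on compact sets equals $\lambda(s_n)\|v_t(s_n)\|_{L^2}$ on the corresponding ball. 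This is exactly the Palais--Smale quantity (\ref{eq:palais smale condition}), so it tends to $0$ by assumption.

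Next we rerun the machinery of Section \ref{sec: profile decompo along sequence of times} along the sequence $s_n$ in place of $t_k$. The only properties of $t_k$ used there were the Palais--Smale smallness at the relevant scales, and these hold here at scales $\lesssim \lambda(s_n)$. At scales comparable to $\sqrt{T-s_n}$ we do not use Palais--Smale; there we use instead $d_0(s_n,\rho(s_n)) \le \varepsilon_n \to 0$ from the collision-interval property together with the no-concentration statement (\ref{eq: no concentration at self-similar scale}). The steps are as follows.
\begin{itemize}
\item Extract profiles of $(v(s_n)-v(T))\chi_{1/4}$. Each nonzero profile is a harmonic map $\pm Q$ at a scale $\ll \sqrt{T-s_n}$.
\item The energy identity (\ref{eq: existence limit of energy at time T}) says the total energy $E(Q)$ is carried, so there is exactly one bubble.
\item Run the strong-convergence argument of Lemma \ref{lemma:extraction of scale mu_k} and Proposition \ref{prop:strong convergence of remainder}. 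The $\mu_k$-extraction there needs Palais--Smale at scale $\mu_k \lesssim \sqrt{T-s_n}$. This is the delicate point: we only know $\lambda(s_n)\|v_t\|\to0$, not $\sqrt{T-s_n}\|v_t\|\to 0$. I would split the range of scales. For scales $\mu \lesssim \lambda(s_n)$, Palais--Smale holds. For scales $\mu \gg \lambda(s_n)$, we use that $d_1(s_n,0)\le\eta$ and orbital stability keep the solution $\eta$-close to $Q_{\lambda(s_n)}$ in $H^1$. By Lemma \ref{lemma:closeness to l0 pi}, this closeness forbids any additional bubble there. Any non-bubble remainder energy at such scales must then be small in the limit, by the dichotomy used in Lemma \ref{lemma: lambda(t) and lambda(s) become similar in the limit}. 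Rather than the full dichotomy, I would first try the direct Pythagorean energy count: the bubble at scale $\lambda(s_n)$ already exhausts $E(Q)$, which leaves zero energy for any remainder.
\end{itemize}

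With one bubble and a strongly vanishing remainder, we conclude
$$
\|v(s_n)-v(T)-Q_{\mu_n}+\pi\|_{H^1(B_\delta)}\to 0, \qquad \frac{\mu_n}{\sqrt{T-s_n}}\to0 .
$$
Hence $d_1(s_n,0)\to0$, which is the desired contradiction.

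The main obstacle is the last bullet: upgrading weak to strong convergence of the remainder at intermediate scales between $\lambda(s_n)$ and $\sqrt{T-s_n}$, where Palais--Smale is not directly available. If the energy-count argument is insufficient, I would fall back on the quantitative energy identity. Since $\|v(s_n)-v(T)\|_{\dot H^1(B_\delta)}^2\to E(Q)$ by Lemma \ref{lemma: failure of convergence of v(t) to v(T) in norm}, and the extracted bubble carries $E(Q)$ in the weak limit, the remainder's $\dot H^1$ norm tends to zero exactly as in the last step of the proof of Lemma \ref{lemma:collision interval length estimate}. To make this work, I would normalize the bubble scale to $\lambda(s_n)$ and obtain the weak identification of $V_n - v(T,\lambda(s_n)\cdot)+\pi$ with $Q$ on compacts via elliptic regularity, using the vanishing right-hand side.
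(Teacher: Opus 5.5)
Your fallback (the ``direct Pythagorean energy count'' and your last paragraph) is the paper's proof. The assumption gives Palais--Smale at the single scale $\lambda(s_n)$, so $v(s_n,\lambda(s_n)\cdot)$ has a harmonic-map weak limit $\psi$. Lemma \ref{lemma: failure of convergence of v(t) to v(T) in norm} gives $\|v(s_n)-v(T)\|_{\dot H^1(B_\delta)}\to\|Q\|_{\dot H^1(\mathbb R^2)}$. Weak $L^2$ convergence of $\nabla[v(s_n,\lambda(s_n)\cdot)-v(T,\lambda(s_n)\cdot)]$ (on $B_{\delta/\lambda(s_n)}$, extended by zero) to $\nabla\psi$, together with convergence of the norms, then gives strong convergence, hence $d_1(s_n,0)\to0$.

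You should drop the rerun of Section \ref{sec: profile decompo along sequence of times}. As you suspect, it does not go through: Palais--Smale is unavailable at scales between $\lambda(s_n)$ and $\sqrt{T-s_n}$. So neither the identification of further profiles as harmonic maps nor Lemma \ref{lemma:extraction of scale mu_k} applies, and Lemma \ref{lemma: lambda(t) and lambda(s) become similar in the limit} contains no energy dichotomy you could use. It is also unnecessary, since the energy count disposes of all other profiles and of the remainder at once.

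Two steps in your version need repair.

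\textbf{Non-constancy of $\psi$.} The energy count only closes if $\psi$ is non-constant, so that $\|\psi\|_{\dot H^1}=\|Q\|_{\dot H^1}$; elliptic regularity does not give this. It comes from orbital stability: $v(s_n)=Q_{\lambda(s_n)}+e_n$ on $[0,\delta]$ with $\|e_n\chi_{r_0/2}\|_{H^1(B_\delta)}\le\|Q\|_{\dot H^1(B_1)}/4$. After rescaling and passing to the limit this gives $\|\psi-Q\|_{\dot H^1(B_1)}\le\|Q\|_{\dot H^1(B_1)}/4$. Hence $\psi=m\pi+Q(\alpha\cdot)$ for some $\alpha>0$, not necessarily $\alpha=1$; you rescale $\lambda(s_n)$ by $\alpha$ rather than identify the limit with $Q$ directly.

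\textbf{The bound $\lambda(s_n)\ll\sqrt{T-s_n}$.} This does not follow from $d_1$ as you claim at the start. On $[c_n,d_n]$ you only know $\varepsilon\le d_1(t,0)\le\eta$, so $\lambda(s_n)/\sqrt{T-s_n}$ is merely bounded. The argument in Lemma \ref{lemma:collision interval length estimate} used $d_1(c_n,0)\to0$, which fails here. You do need this bound, because $d_1$ contains the term $\lambda/\sqrt{T-t}$. The correct route is to repeat the proof of Corollary \ref{cor: lambda_k ll sqrt T-t_k} along $s_n$ with the non-constant limit $\psi$, using (\ref{eq: no concentration at self-similar scale}).
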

\begin{proof}
    Assume for a contradiction that there is $s_n \in [c_n,d_n]$ with
    $$
    \lim_{n \to +\infty} \lambda(s_n) ||v_t(s_n)||_{L^2(B_{\delta/2}(0))} = 0.
    $$
    As in (\ref{eq:palais smale condition}), it follows that
    $$
\lim \limits_{n \to +\infty} |E'(v(s_n))[\eta_n]| \to 0, \quad \eta_n = \eta(r/\lambda(s_n)), \quad \eta \in C^{\infty}_c((0,+\infty)),
    $$
    and, up to taking a further subsequence, $v_n(r) = v(\lambda(s_n)r,s_n)$ has a weak limit $\psi \in H^1_{loc}(\mathbb R^2)$ which must be a harmonic map. We show that $\psi$ is non-constant. Recall that $d_1(s_n,0) \leq \eta$, which means that there is $\lambda_n \lesssim \sqrt{T-s_n}$ with 
    $$
||v(s_n,r) - v(T,r) -  Q_{\lambda_n}(r) + \pi||_{H^1(B_{\delta}(0))} \leq 2 d_1(s_n,0) \leq 2\eta
$$
and by Orbital Stability,
    $$
v(s_n,r) = Q_{\lambda(s_n)}(r) + r^D\tilde{g}_n(r) =: Q_{\lambda(s_n)}(r) + e_n(r), \quad r \in [0,\delta]
    $$
   where $\lambda(s_n) \in (e^{-1}\lambda_n,e\lambda_n)$ and
    $$
    ||e_n \chi_{r_0/2}||_{H^1(B_{\delta}(0))} \leq ||Q||_{\dot{H}^1(B_1(0))}/4.
    $$
    Rescale everything:
 \begin{align*}
     v(s_n,\lambda(s_n) r) = Q(r) + e_n(\lambda(s_n) r), \quad r \in [0,\delta/\lambda(s_n)].
 \end{align*}
    For $n$ large enough, $\lambda(s_n) < r_0/2 < \delta$ and by scaling invariance,
    \begin{align*}
        || v(s_n,\lambda(s_n) r) - Q||_{\dot{H}^1(B_{1}(0))}  \leq ||e_n||_{\dot{H}^1(B_{\lambda(s_n)}(0))} \leq ||Q||_{\dot{H}^1(B_1(0))}/4.
    \end{align*}
    Passing to the limit,
    $$
||\psi - Q||_{\dot{H}^1(B_{1}(0))}  \leq ||Q||_{\dot{H}^1(B_1(0))}/4,
    $$
    which implies that $\psi$ is non-constant and of the form $m\pi + 2\arctan(\alpha^Dr^D)$ for some $m \in \mathbb Z$ and $\alpha > 0$. In particular, $||\psi||_{\dot{H}^1(\mathbb R^2)} =||Q||_{\dot{H}^1(\mathbb R^2)}$.

     Let
 \begin{align*}
    V_n(r) = v(s_n,\lambda(s_n) r) -v(T,\lambda(s_n) r)+\pi, \quad r \in [0,\delta/\lambda(s_n)].
 \end{align*}
    and extend $V_n(r)$ as $0$ on $(\delta \lambda(s_n)^{-1},+\infty)$. 
    
    As $V_n$ is uniformly bounded in $L^{\infty}(\mathbb R^2)$, a diagonal argument shows that, up to taking a subsequence, $V_n$ has a limit $V_{\infty}$ in the sense of distribution $\mathcal{D}'(\mathbb R^2)$ which is a $L^2_{loc}(\mathbb R^2)$-function, denoted $V_{\infty}$. As $\pi -v(T,\lambda(s_n) r)\to 0$ on compact sets, $V_{\infty} = \psi$.
    
    By Lemma \ref{lemma: failure of convergence of v(t) to v(T) in norm}, 
\begin{align}
 ||V_n(r)||_{\dot{H}^1(B_{\delta/\lambda(s_n)}(0))} 
   &= || v(s_n,\cdot) - v(T,\cdot) ||_{\dot{H}^1(B_{\delta}(0))}  \to ||Q||_{\dot{H}^1(\mathbb R^2)}. \label{eq: psi non-constant}
\end{align}
The distributional convergence and the uniform $L^2$-bound in (\ref{eq: psi non-constant}) implies weak  $L^2(\mathbb R^2)$-convergence 
$$
1_{B_{\delta/\lambda(s_n)}(0)} \cdot \nabla V_n(r) \to \nabla \psi,$$
The convergence of the norms then implies strong $L^2(\mathbb R^2)$-convergence, meaning that 
$$
\lim \limits_{n \to +\infty} ||v(s_n,\lambda(s_n) \cdot) - v(T,\lambda(s_n)  \cdot) - \psi + \pi||_{\dot{H}^1(B_{\delta \lambda(s_n)^{-1}}(0))} = 0
$$
After rescaling,
$$
\lim \limits_{n \to +\infty} ||v(s_n, \cdot) - v(T, \cdot) - \psi_{\lambda(s_n)} + \pi||_{\dot{H}^1(B_{\delta}(0))} = 0.
$$
 Strong $L^2(B_{\delta}(0))$-convergence
 $$
\lim \limits_{n \to +\infty} ||v(s_n, \cdot) - v(T, \cdot) - [\psi_{\lambda(s_n)} - \psi(0)] + \pi||_{H^1(B_{\delta}(0))} = 0
$$
follows by dominated convergence as everything is uniformly bounded in $L^{\infty}$, where $$\psi(r) - \psi(0) = Q(\alpha r)$$ for some $\alpha > 0$. Replacing $\lambda(s_n)$ by $\lambda(s_n)/\alpha$, one can assume without loss of generality that $\alpha = 1$. Repeating the proof of Corollary \ref{cor: lambda_k ll sqrt T-t_k} with the sequence $s_n$ shows that 
 $\lambda(s_n) \ll \sqrt{T-s_n}$. Hence,
    $$
    \lim \limits_{n \to +\infty} d_1(s_n,0) = 0,
    $$
    contradicting the assumption $d_1(s_n,0) \geq \varepsilon$.

\end{proof}

We now reach the final contradiction. 

\begin{proof}[Proof of Theorem \ref{thm: continuous bubbling}]
Let $c_0 > 0$ and $n_0 \in \mathbb N$ be such that 
$$
\inf_{t \in [c_n,d_n]} \lambda(t)||v_t(t)||_{L^2(B_{\delta/2}(0))} \geq c_0, \quad n \geq n_0,
$$
by Proposition \ref{prop: lambda(t) v_t estimate on c_n d_n}. Using Corollary \ref{cor: collision interval length exact estimate}, 
\begin{align*}
\int_{c_n}^{d_n} ||v_t(t)||_{L^2(B_{\delta/2}(0))}^2 dt &\geq \int_{c_n}^{d_n} \frac{c_0^2}{\lambda(t)^2} dt \\
&\geq  \int_{c_n}^{d_n} \frac{c_0^2}{4\lambda(c_n)^2} dt = \frac{c_0^2}{4n}.
\end{align*}
Summing over $n$, it follows that
$$
\int_{c_1}^{T} ||v_t(t)||_{L^2(B_{\delta/2}(0))}^2 dt = +\infty
$$
diverges, contradicting (\ref{eq: finite L^2-norm of v_t}). Hence, $\lim \limits_{t \to T}d_1(t,0) = 0$.
\end{proof}

\section{Completion of the proof of Theorem \ref{thm:main thm}}
It follows from Theorem \ref{thm: continuous bubbling} that for each $s$ close to $T$, there is $\mu_s > 0$ for which
\begin{equation}
     \lim \limits_{s \to T} \left( ||u(\cdot,s) - u_*(\cdot) -  W_{\mu_s}||_{H^1(B_{\delta}(0))} + \frac{\mu_s}{\sqrt{T-s}} \right)\to 0, \label{eq: single bubble, in time}
\end{equation}
If $s$ is sufficiently close to $T$, then $\mu_s < \delta/(2R_0)$ and there is an open neighborhood $I_s$ of $s$ on which
$$
\sup_{t \in I_s}||u(\cdot,t) - u_*(\cdot) -  W_{\mu_s}||_{H^1(B_{\delta}(0))} + \frac{\mu_s}{\sqrt{T-s}} \leq \frac{\delta_1}{2}
$$
for the constants defined in Proposition \ref{prop: orbital stability}. One obtains from Orbital Stability a remainder $\tilde{g}_s$ (as in Section \ref{sec:orbital stability}), as well as a $C^1$-scale $\lambda_s(t)$, $e^{-1}\mu_s \leq \lambda_s(t) \leq 2 \mu_s$,  defined on $I_s$ such that $u(\cdot,\tau) -  (1-\chi_{r_0})u_*(\cdot) \in B_{\delta_1}(W_{\mu_s})$ for all $\tau \in I_s$,
$$
u(t,r) - (1-\chi_{r_0})u_*(r) = W_{\lambda_s(t)} + g_s, \quad 
\langle \tilde{g}_s(t,\cdot), Z_{\lambda_s(t)}\rangle_{L^2(B_{\delta}(0))} = 0, \quad t \in I_s,
$$
and $\tilde{g}_s = g_s + (1-\chi_{r_0})u_*(r)$ satisfies the inequalities (\ref{eq: smallness of inner product with g_n}) on $I_s$.

Define $\lambda(t) = \lambda_s(t)$ if $t \in I_s$. Then $\lambda(t)$ is well-defined because if $t \in I_s \cap I_{s'}$, then Proposition  \ref{prop: orbital stability} ensures $\lambda_s(t) = \lambda_{s'}(t)$ on that overlap. In particular, $\lambda(s) = \lambda_s(s) \sim \mu_s \ll \sqrt{T-s} \to 0$ as $s \to T$. It follows that $\tilde{g}(t) = \tilde{g}_s(t)$  if $t \in I_s$ is also well-defined. 

Applying the modulation estimates on $\lambda(t)$, this yields a contradiction, as described in Section \ref{sec: applying modulation} (see (\ref{eq: final contradiction})).

\appendix

\section{Profile decompositions in general Hilbert spaces}

The goal of this appendix is to introduce the notion of \textbf{group of dislocations} acting on a Hilbert space and state the Fieseler, Schindler and Tintarev abstract profile decomposition for bounded sequences in $H$. It is the natural generalization of Bahouri and Gérard's profile decomposition. We follow the exposition given by Tao (\cite[Theorem 4.5.3]{tao2013compactness}) and Okumura (\cite[Section 2]{okumura2022profile}).

\begin{definition}[Group of dislocations]
    Let $H$ be a Hilbert space. Let $G$ be a locally sequentially compact group acting unitarily on $H$ and assume that the group action map $G \times H \to H$, $(g,u) \mapsto gu$ is jointly continuous. We say that $G$ is a group of \textit{dislocations} if for any sequence $(g_n) \in G$ which escapes any compact set, the sequence of induced isometries converges operator-weakly to zero, i.e., $g_nu \to 0$ weakly in $H$ for any $u \in H$.
\end{definition}

\begin{remark}
    The trivial group is a group of dislocations on any Hilbert space $H$. The translation action $u(x) \mapsto u(x - x_0)$, $x_0 \in \mathbb R^n$, and the scaling action $u(x) \mapsto \lambda^{-\frac{n-2}{2}}u(x/\lambda)$, $\lambda > 0$, (or the composition of both) create a group of dislocations for the homogeneous Sobolev space $\dot{H}^1(\mathbb R^n)$, as well as $L^{\frac{2n}{n-2}}(\mathbb R^n)$ (this is technically not a Hilbert space, but the definition can be generalized, see Remark \ref{rem:generalization dislocation}).
\end{remark}

\begin{theorem}[Profile Decomposition]\label{thm:abstract profile decomposition}
    Let $H$ be a Hilbert space and $G$ be a group of dislocations acting on $H$. Let $(x_n)_{n \in \mathbb N} \subset H$ be a bounded sequence. After passing to a subsequence (still denoted with index $n$) of $(x_n)_{n \in \mathbb N}$, there exist profiles $(\phi_{m})_{m \in \mathbb N} \subset H$, group elements $((g_{m,n})_{m \in \mathbb N})_{n \geq m} \subset G$, remainders  $((w_{m,n})_{m \in \mathbb N})_{n \geq m} \subset H$ for which
$$
x_{n} = \sum_{m=1}^M g_{m,n} \phi_m + w_{M,n}, \quad M \in \mathbb N_{\geq 1}, n \geq M.
$$
Moreover, one has:
\begin{enumerate}
    \item \textbf{Pythagorean decomposition of energy:}
    $$
    \limsup_{n \to +\infty} ||x_{n}||^2  =  \sum_{m=1}^M||\phi_m||^2 + \limsup_{n \to +\infty} ||w_{M,n}||^2, \quad M \in \mathbb N_{\geq 1}.
    $$
    \item \textbf{Finiteness of decomposition and non-zero profiles:} Let $w_{0,n} = x_n$ and
    \begin{equation}
    D[(x_n)] := \sup\{||\phi|| : \exists \phi \in H, \exists (n_k)_{k \in \mathbb N} \subset \mathbb N, (g_k)_{k \in \mathbb N} \subset G, g^{-1}_{k}x_{n_k} \to \phi \text{ weakly } \}. \label{eq:D(x_n) def}
\end{equation}
    For $M \in \mathbb N_{\geq 0}$, the extracted profile $\phi_{M+1}$ is non-zero if and only if $D[(w_{M,n})] > 0$. Moreover, if $D[(w_{M,n})] = 0$, then all the subsequent profiles $\phi_{M+1} = \phi_{M+2} = ... = 0$ are zero and the decomposition is finite. 
    \item \textbf{Separation of scales:} Fix $i \neq j$ and assume that $\phi_i, \phi_j$ are non-zero profiles. Then $(g^{-1}_{i,n} g_{j,n})_{n \in \mathbb N}$ escapes any compact set.
    \item \textbf{Weak convergence to profile:} For any $M \in \mathbb N_{\geq 1}$, if either $M = 1$ or $\phi_M$ is a non-zero profile, then $g^{-1}_{M,n}x_n \to \phi_M$ weakly.
    \item \textbf{Smallness of remainder:} For $1 \leq m \leq M$, $g^{-1}_{m,n}w_{M,n} \to 0$ weakly. Moreover, the remainders also converge weakly to zero in the following sense:
    $$
    \lim_{M\to +\infty}\sup_{||y|| = 1}  \limsup_{n \to +\infty} \sup_{g \in G} |\langle g^{-1}w_{M,n}, y\rangle| = 0.
    $$
\end{enumerate}
\end{theorem}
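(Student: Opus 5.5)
This is the abstract Fieseler--Schindler--Tintarev profile decomposition, and I would prove it by the standard iterative extraction, following Tao's exposition. The one structural input I use repeatedly is the dislocation property: if $g_n^{-1}g'_n$ escapes every compact set, then $g_n^{-1}g'_n u \to 0$ weakly for every fixed $u$. I would also use local sequential compactness of $G$ together with joint continuity of the action: if $g_n^{-1}g'_n$ stays in a compact set, then after passing to a subsequence it converges to some $h \in G$, and $g_n^{-1}g'_n u \to h u$ strongly.

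\textbf{Extraction scheme.} Set $w_{0,n}=x_n$. Given $w_{M,n}$, which is bounded, the quantity $D[(w_{M,n})]$ in (\ref{eq:D(x_n) def}) is finite. If $D[(w_{M,n})]=0$, set all later profiles to zero. Otherwise:
\begin{enumerate}
\item Choose a subsequence and $g_{M+1,n}$ such that $g_{M+1,n}^{-1}w_{M,n}\to\phi_{M+1}$ weakly, with $\|\phi_{M+1}\|\ge \tfrac12 D[(w_{M,n})]$.
\item Set $w_{M+1,n}=w_{M,n}-g_{M+1,n}\phi_{M+1}$.
\end{enumerate}
For $M=1$ I would also allow the trivial choice $g_{1,n}=\mathrm{id}$ with the weak limit of $x_n$, which is what Corollary~\ref{thm:abstract profile decomposition with weak limit first} needs. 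A diagonal subsequence then makes all these choices valid for every $n\ge M$. Since $w_{M+1,n}$ is $w_{M,n}$ minus its weak limit in the $g_{M+1,n}$ frame, unitarity gives
\[
\|w_{M,n}\|^2=\|\phi_{M+1}\|^2+\|w_{M+1,n}\|^2+o(1).
\]
Summing over $M$ and taking $\limsup$ yields the Pythagorean identity, and in particular $\sum_m\|\phi_m\|^2\le\sup_n\|x_n\|^2$. Property 2 is immediate from the construction: $\phi_{M+1}\neq0$ exactly when $D>0$, and the case $D=0$ propagates trivially.

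\textbf{Separation, weak convergence, and smallness of the remainder.} I would prove these by induction on $M$. First, $g_{M+1,n}^{-1}w_{M+1,n}\to0$ weakly by construction. Next, suppose $g_{i,n}^{-1}g_{j,n}$ (with $i<j$, both profiles nonzero) did not escape compact sets. Then along a subsequence it converges to some $h$. Since $g_{i,n}^{-1}w_{j-1,n}\to0$ weakly (inductively), we would get $g_{j,n}^{-1}w_{j-1,n}\to h^{-1}\cdot 0=0$, contradicting $\phi_j\neq0$. This gives separation (property 3). Separation combined with the dislocation property then shows that $g_{m,n}^{-1}g_{j,n}\phi_j\to0$ weakly for $m\neq j$. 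This propagates the statement $g_{m,n}^{-1}w_{M,n}\to0$ to all $m\le M$, and it also gives $g_{M,n}^{-1}x_n\to\phi_M$ weakly, which is property 4.

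\textbf{Uniform smallness of the remainder.} The last assertion of item 5 follows because $D[(w_{M,n})]\le 2\|\phi_{M+1}\|\to0$, since the sum of the squared profile norms converges. If the uniform statement failed, I would pick $y$, $g_n$, and a subsequence with $|\langle g_n^{-1}w_{M,n},y\rangle|\ge c$. Extracting a weak limit of $g_n^{-1}w_{M,n}$ would then produce an element of norm at least $c$, so $D[(w_{M,n})]\ge c$ for all large $M$, which is a contradiction.

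\textbf{Main obstacle.} The delicate step is the diagonal and subsequence bookkeeping. All properties must hold simultaneously for every $M$ along a single subsequence, and the non-separation alternative requires local sequential compactness, which is exactly where the hypotheses on $G$ enter.
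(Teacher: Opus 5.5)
Your proposal is correct. It is essentially the standard greedy-extraction argument: near-maximal weak limits with $\|\phi_{M+1}\|\ge \tfrac12 D[(w_{M,n})]$, separation by induction using local sequential compactness and continuity of the action, and $D[(w_{M,n})]\le 2\|\phi_{M+1}\|\to 0$ for the uniform smallness. This is the argument in the Tao and Okumura expositions, which the paper cites instead of giving its own proof.

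One caution: do not use your optional choice $g_{1,n}=\mathrm{id}$, $\phi_1$ equal to the weak limit of $x_n$, for the Theorem itself. It breaks the $M=0$ case of item 2 whenever $x_n\to 0$ weakly but $D[(x_n)]>0$. The paper instead obtains the Corollary by applying the Theorem to $x_n-x_*$.
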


\begin{remark}\label{rem:generalization dislocation}
    The theorem generalizes to Banach spaces and to dislocation sets (i.e. without the group structure), see the book of Tintarev (\cite{tintarev2020concentration}), as well as the papers \cite{okumura2022profile} and \cite{schindlertintarev2002abstract}, for an exposition on the topic.
\end{remark}

\begin{remark}
    Using a trivial group of dislocations, one recovers Banach-Alaoglu's theorem on $H$.
\end{remark}

As a corollary, we show that one can always choose a decomposition for which the first extracted profile is the weak limit of the subsequence. 

\begin{corollary}[Profile Decomposition with explicit weak limit]\label{thm:abstract profile decomposition with weak limit first}
    Let $H$ be a Hilbert space and $G$ be a group of dislocations acting on $H$. Let $(x_n)_{n \in \mathbb N} \subset H$ be a bounded sequence. After passing to a subsequence (still denoted with index $n$) of $(x_n)_{n \in \mathbb N}$ converging weakly to $x_* \in H$, there exist profiles $(\phi_{m})_{m \in \mathbb N} \subset H$, group elements $((g_{m,n})_{m \in \mathbb N})_{n \geq m} \subset G$, remainders  $((w_{m,n})_{m \in \mathbb N})_{n \geq m} \subset H$ for which
$$
x_{n} = x_* + \sum_{m=2}^M g_{m,n} \phi_m + w_{M,n}, \quad M \in \mathbb  N_{\geq 1}, n \geq M .
$$
Moreover, one has:
\begin{enumerate}
    \item \textbf{Pythagorean decomposition of energy:}
\begin{align*}
    \limsup_{n \to +\infty} ||x_{n}||^2  &=  ||x_*||^2 + \sum_{m=2}^M||\phi_m||^2 + \limsup_{n \to +\infty} ||w_{M,n}||^2, \quad M \in \mathbb N_{\geq 1}. \\
        \limsup_{n \to +\infty} ||x_{n}-x_*||^2  &= \sum_{m=2}^M||\phi_m||^2 + \limsup_{n \to +\infty} ||w_{M,n}||^2, \quad M \in \mathbb N_{\geq 1}.
\end{align*}
    \item \textbf{Finiteness of decomposition and non-zero profiles:} Let $w_{1,n} = x_n-x_*$ and
    \begin{equation}
    D[(x_n)] := \sup\{||\phi|| : \exists \phi \in H, \exists (n_k)_{k \in \mathbb N} \subset \mathbb N, (g_k)_{k \in \mathbb N} \subset G, g^{-1}_{k}x_{n_k} \to \phi \text{ weakly } \}.
\end{equation}
    For $M \in \mathbb N_{\geq 1}$, the extracted profile $\phi_{M+1}$ is non-zero if and only if $D[(w_{M,n})] > 0$. Moreover, if $D[(w_{M,n})] = 0$, then all the subsequent profiles $\phi_{M+1} = \phi_{M+2} = ... = 0$ are zero and the decomposition is finite. 
    \item \textbf{Separation of scales:} Let $g_{1,n} = \mathrm{Id}_G$. Fix $i \neq j$ and assume either $\phi_i, \phi_j$ are non-zero profiles or $i = 1$ and $\phi_j$ is a non-zero profile. Then, $(g^{-1}_{i,n} g_{j,n})_{n \in \mathbb N}$  escapes any compact set.
    \item \textbf{Weak convergence to profile:} For any $M \in \mathbb N_{\geq 1}$, if either $M = 1$ or $\phi_M$ is a non-zero profile, then $g^{-1}_{M,n}x_n \to \phi_M$ weakly. In particular, $x_n \to \phi_1 = x_*$ weakly.
    \item \textbf{Smallness of remainder:} For $1 \leq m \leq M$, $g^{-1}_{m,n}w_{M,n} \to 0$ weakly. Moreover, the remainders also converge weakly to zero in the following sense:
    $$
    \lim_{M\to +\infty}\sup_{||y|| = 1}  \limsup_{n \to +\infty} \sup_{g \in G} |\langle g^{-1}w_{M,n}, y\rangle| = 0.
    $$
\end{enumerate}
\end{corollary}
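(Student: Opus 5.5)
We obtain the corollary from Theorem \ref{thm:abstract profile decomposition}, applied to the recentred sequence $y_n := x_n - x_*$ rather than to $x_n$. The weak limit $x_*$ is then inserted by hand as the first profile, with trivial group element. Concretely, by Banach--Alaoglu we first pass to a subsequence with $x_n \rightharpoonup x_*$ in $H$. The sequence $(y_n)$ is bounded and satisfies $y_n \rightharpoonup 0$. Theorem \ref{thm:abstract profile decomposition} applied to $(y_n)$ gives, after a further subsequence, profiles $\phi'_m$, group elements $g'_{m,n}$ and remainders $w'_{M,n}$. We then set
\[
\phi_1 = x_*, \quad g_{1,n} = \mathrm{Id}_G, \quad w_{1,n} = y_n, \quad \phi_{m+1} = \phi'_m, \quad g_{m+1,n} = g'_{m,n}, \quad w_{M+1,n} = w'_{M,n}.
\]
With these choices the decomposition $x_n = x_* + \sum_{m=2}^{M} g_{m,n}\phi_m + w_{M,n}$ holds. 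The finiteness criterion for $M \geq 1$ is exactly item 2 of the theorem with shifted indices, since $w_{0,n}' = y_n = w_{1,n}$.

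The key new ingredient is separation from the identity. We must show that every nonzero profile $\phi'_m$ has $(g'_{m,n})$ escaping every compact set. Suppose instead that along a subsequence $g'_{m,n} \to g$ (local sequential compactness). For $z \in H$, unitarity gives $\langle g'^{-1}_{m,n} y_n, z\rangle = \langle y_n, g'_{m,n} z\rangle$. By joint continuity, $g'_{m,n} z \to g z$ strongly, and since $y_n \rightharpoonup 0$ is bounded, this pairing tends to $0$. Hence the weak limit $\phi'_m$ of $g'^{-1}_{m,n} y_n$ (item 4) vanishes, a contradiction. This yields item 3 with $i=1$; the remaining pairs $i,j \geq 2$ are inherited from the theorem. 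We also note that if $(g_n)$ escapes compact sets then so does $(g_n^{-1})$, by continuity of inversion.

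The Pythagorean identities follow by expanding
\[
\|x_n\|^2 = \|x_*\|^2 + \|y_n\|^2 + 2\,\mathrm{Re}\langle x_*, y_n\rangle ,
\]
where the cross term tends to $0$ by weak convergence. Combined with item 1 for $(y_n)$, this gives both displayed identities. For the weak convergence to profiles with $M \geq 2$ and $\phi_M \neq 0$, we write
\[
g_{M,n}^{-1} x_n = g_{M,n}^{-1} x_* + g'^{-1}_{M-1,n} y_n .
\]
The first term tends weakly to $0$ by the dislocation property, since $g_{M,n}^{-1}$ escapes. The second term tends weakly to $\phi'_{M-1} = \phi_M$ by the theorem. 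For $M = 1$ we simply have $x_n \rightharpoonup x_*$.

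For the smallness of the remainder, the cases $m \geq 2$ come from the theorem. For $m = 1$ we need $w_{M,n} \rightharpoonup 0$. This holds because $w_{M,n} = y_n - \sum_{m=2}^{M} g_{m,n}\phi_m$: here $y_n \rightharpoonup 0$, each nonzero term $g_{m,n}\phi_m$ tends weakly to $0$ by escape, and zero profiles contribute nothing. The uniform smallness statement as $M \to \infty$ is inherited verbatim from the theorem. The main obstacle is the separation step: ruling out a nonzero profile of $y_n$ with bounded group elements. We resolve it through the strong continuity of the action together with $y_n \rightharpoonup 0$, as above.
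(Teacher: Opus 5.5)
Your proposal is correct and follows essentially the same route as the paper. Both arguments recentre at the weak limit $x_*$, apply Theorem \ref{thm:abstract profile decomposition} to $x_n - x_*$ with shifted indices and $g_{1,n} = \mathrm{Id}_G$, and rule out convergent group elements for nonzero profiles using joint continuity, unitarity and $x_n - x_* \rightharpoonup 0$; the dislocation property then gives $g_{j,n}^{-1}x_* \rightharpoonup 0$ and $g_{j,n}\phi_j \rightharpoonup 0$. You also spell out two small steps the paper leaves implicit: that the cross term $2\,\mathrm{Re}\langle x_*, y_n\rangle$ vanishes, and that inverses of escaping sequences escape, which covers the pair $i \geq 2$, $j = 1$.
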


\begin{proof}
    Let $x_*$ be a weak limit for $x_n$ after passing to a first subsequence. Apply Theorem \ref{thm:abstract profile decomposition}  using the trivial dislocation group. Hence,
    $$
    x_n = x_* + (x_n - x_*)
    $$
    with energy decoupling. Apply Theorem \ref{thm:abstract profile decomposition} to $x_n -x_*$:
  \begin{equation}
      x_{n} -x_* = \sum_{m=2}^M g_{m,n} \phi_m + w_{M,n}, \quad M \in \mathbb N_{\geq 1}, n \geq M, \label{eq: profile decomposition of x_n-x_*}
  \end{equation}
where we order the profiles starting at $m = 2$ and define $g_{1,n} = \mathrm{Id}_G$.  The energy decoupling follows easily.

Next, we know from the theorem that $(g_{i,n}^{-1}g_{j,n})_{n \in \mathbb N}$ is not contained in any compact set for $i \neq j$, $i,j \geq 2$, if $\phi_i, \phi_j$ are non-zero profiles. Moreover, $g_{j,n}^{-1}(x_n-x_*) \to \phi_j$ weakly if $j = 2$ or $\phi_j$ is a non-zero profile. We first show that $(g_{j,n})_{n \in \mathbb N} = (g_{1,n}^{-1}g_{j,n})_{n \in \mathbb N}$ escapes any compact set for $j \neq 1$ if $\phi_j$ is a non-zero profile. Assume not. After passing to a further subsequence, one can assume that $g_{j,n}$ converges to a group element $g_{j,*} \in G$ in the group topology. As the group action is jointly continuous and $G$ acts unitarily,
$$
\langle g_{j,n}^{-1}(x_n-x_*), \phi \rangle_{H} -\langle g_{j,*}^{-1}(x_n-x_*), \phi \rangle_{H} = \langle (x_n-x_*), (g_{j,n}-g_{j,*})\phi \rangle_{H} \to 0 \quad \forall \phi \in H 
$$
by Cauchy-Schwarz, while 
$$
\langle g_{j,*}^{-1}(x_n-x_*), \phi \rangle_{H} = \langle (x_n-x_*), g_{j,*} \phi \rangle_{H} \to 0 \quad \forall \phi \in H
$$
by weak convergence of $x_n \to x_*$. In other words, $g_{j,n}^{-1}(x_n-x_*) \to 0$ weakly. Yet, by construction of the profiles, $g_{j,n}^{-1}(x_n-x_*) \to \phi_j$ weakly, contradicting the fact that $\phi_j$ is  a non-zero profile.

Furthermore, we show that $g_{j,n}^{-1} x_n \to \phi_j$ weakly if $\phi_j$ is a non-zero profile. We know that $g_{j,n}^{-1} (x_n-x_*) \to \phi_j$ weakly by construction and $g_{j,n}^{-1}$ escapes any compact set. By definition of a dislocation group, $g_{j,n}^{-1}x_* \to 0$ weakly, which proves the claim.

Finally, we need to show that $g_{1,n}^{-1}w_{M,n} = w_{M,n} \to 0$ weakly. Looking at the decomposition (\ref{eq: profile decomposition of x_n-x_*}), it is sufficient to prove that $g_{j,n}\phi_j \to 0$ if $\phi_j$ is a non-zero profile. This follows from the definition of a dislocation group, since $(g_{j,n})_{n \in \mathbb N}$ escapes any compact set. This finishes the proof. 
\end{proof}

 \section*{Declaration}
 \textbf{Data Availability:} Data sharing is not applicable as no datasets were generated or analyzed during the current study.

\textbf{Conflict of interest:} The authors declare that they have no conflict of interest.

\vfill

\nocite{*}
\bibliography{sn-bibliography}

\textbf{Dr. Dylan Samuelian} \\
Ecole Polytechnique Fédérale de Lausanne (EPFL) \\
dylan.samuelian@epfl.ch

\end{document}